\setlist[enumerate,1]{label={\normalfont(\roman*)}}
\numberwithin{equation}{section}
\newcommand{\R}{\mathbf{R}}
\newcommand{\Z}{\mathbf{Z}}
\newcommand{\C}{\mathbf{C}}
\newcommand{\RN}{\mathbf{R}^N}
\newcommand{\N}{\mathbf{N}}
\newcommand{\e}{\varepsilon}
\newcommand{\rd}{\mathrm{d}}
\newcommand{\la}{\left\langle}
\newcommand{\ra}{\right\rangle}
\newcommand{\RE}{\mathrm{Re}\,}
\newcommand{\SN}{\mathbf{S}^{N-1}}
\def\ov#1{\overline{#1}}
\def\wt#1{\widetilde{#1}}
\newcommand{\ea}{e(\alpha)}
\newcommand{\ma}{\mathcal{M}(\alpha)}
\newcommand{\tea}{\wt{e}(\alpha)}
\newcommand{\tH}{\wt{H}}
\theoremstyle{definition}
\newtheorem{definition}{Definition}[section]
\theoremstyle{plain}
\newtheorem{theorem}[definition]{Theorem}
\newtheorem{lemma}[definition]{Lemma}
\newtheorem{proposition}[definition]{Proposition}
\theoremstyle{remark}
\newtheorem{remark}[definition]{Remark}
\title[]{The compactness of minimizing sequences for a nonlinear Schr\"odinger system with potentials}
\author[]{Norihisa Ikoma}
\date{}
\address{Department of Mathematics, Faculty of Science and Technology, Keio University, 
	Yagami Campus: 3-14-1 Hiyoshi, Kohoku-ku, Yokohama 223-8522, Japan}
\email{ikoma@math.keio.ac.jp}
\author[]{Yasuhito Miyamoto}
\address{Graduate School of Mathematical Sciences, The University of Tokyo,
	3-8-1 Komaba, Meguro-ku, Tokyo 153-8914, Japan}
\email{miyamoto@ms.u-tokyo.ac.jp}
\subjclass[2010]{35J50, 35J20, 35J61, 35Q55}
\keywords{Minimizing problem, the multiple $L^2$-constraints, Nonlinear Schr\"odinger system, Interaction Estimates}
\begin{document}
	\begin{abstract}
		In this paper, we consider the following minimizing problem with two constraints:
			\[
				\inf \Set{ E(u) | u=(u_1,u_2), \ \| u_1 \|_{L^2}^2 = \alpha_1, \ \| u_2 \|_{L^2}^2 = \alpha_2 },
			\]
		where $\alpha_1,\alpha_2 > 0$ and $E(u)$ is defined by 
			\[
				E(u) := \int_{\mathbf{R}^N} 
				\left\{\frac{1}{2} \sum_{i=1}^2 
				\left( |\nabla u_1|^2 + V_i (x) |u_i|^2 \right) - \sum_{i=1}^2 \frac{\mu_i}{2p_i+2} |u_i|^{2p_i+2} 
				- \frac{\beta}{p_3+1} |u_1|^{p_3+1} |u_2|^{p_3+1} \right\} \mathrm{d} x.
			\]
		Here $N \geq 1$, $ \mu_1,\mu_2,\beta > 0$ and $V_i(x)$ $(i=1,2)$ are given functions. 
		For $V_i(x)$, we consider two cases: (i) both of $V_1$ and $V_2$ are bounded, (ii) one of 
		$V_1$ and $V_2$ is bounded. 
		Under some assumptions on $V_i$ and $p_j$, we discuss the compactness of any minimizing sequence. 
	\end{abstract}

\maketitle

\section{Introduction}
\label{section:1}

In this paper, we discuss the following minimizing problem: 
for $\alpha = (\alpha_1,\alpha_2) \in [0,\infty)^2$, 
	\begin{equation}\label{eq:1.1}
		\begin{aligned}
			\ea &:= \inf 
			\Set{ E(u) | u = (u_1,u_2) \in M(\alpha) }
			 (=e(\alpha_1,\alpha_2)) ,
			\\
			E(u) &:= \frac{1}{2} \sum_{i=1}^2 \int_{\RN} |\nabla u_i|^2 + V_i(x) |u_i|^2  \rd x 
			- \int_{\RN} \sum_{i=1}^2 \frac{\mu_i}{2p_i + 2} |u_i|^{2p_i+2} 
			+ \frac{\beta}{p_3+1} |u_1|^{p_3+1} |u_2|^{p_3+1} \rd x, 
			\\
			M(\alpha) &:= 
			\Set{ u = (u_1,u_2) \in H | 
				\| u_1 \|_{L^2}^2 = \alpha_1, \ \| u_2 \|_{L^2}^2 = \alpha_2}, \ 
			H := H^1 \times H^1, \ 
			H^1 := H^1(\RN,\C).
		\end{aligned}
	\end{equation}
Here $N \geq 1$, $0<p_1,p_2 ,p_3< 2/N$, $0< \mu_1,\mu_2,\beta$ are constants, 
$V_1(x), V_2(x) : \RN\to \R$ are given functions and $\| u_i \|_{L^2}^2 := \int_{\RN} |u_i|^2 \rd x$. 
Though we formally include the case $\alpha_1 = 0$ or $\alpha_2=0$ in \eqref{eq:1.1}, 
we are interested in the case $\alpha_1,\alpha_2>0$. 
The problem \eqref{eq:1.1} is related to a Schr\"odinger system 
	\begin{equation}\label{eq:1.2}
		\left\{\begin{aligned}
			&i \partial_t \psi_1 + \Delta \psi_1 - V_1(x) \psi_1 + \left( \mu_1 |\psi_1|^{2p_1} 
			+ \beta |\psi_1|^{p_3-1} |\psi_2|^{p_3+1}  \right) \psi_1 = 0 ,
			\\
			&i \partial_t \psi_2 + \Delta \psi_2 - V_2(x) \psi_2 + \left( \mu_2 |\psi_2|^{2p_2} 
			+ \beta |\psi_1|^{p_3+1} |\psi_2|^{p_3-1}  \right) \psi_2 = 0 
		\end{aligned}\right.
	\end{equation}
and \eqref{eq:1.2} appears in the field of the Bose--Einstein condensation and nonlinear optics 
\cite{BSSSC-04,CECSM-01,EGBB-97,HS-96,OS-98,PB-98}. 
If $u=(u_1,u_2)$ is a minimizer of \eqref{eq:1.1}, then 
there exist $\lambda_1,\lambda_2 \in \R$ such that 
	\begin{equation}\label{eq:1.3}
			\left\{\begin{aligned}
			&- \Delta u_1 + V_1 (x) u_1 - \left( \mu_1 |u_1|^{2p_1} + \beta |u_1|^{p_3-1} |u_2|^{p_3+1} \right) u_1 
			= - \lambda_1 u_1 & &\mathrm{in} \ \RN,
			\\
			&- \Delta u_2 + V_2 (x) u_2 - \left( \mu_2 |u_2|^{2p_2} + \beta |u_1|^{p_3+1} |u_2|^{p_3-1} \right) u_2 
			= - \lambda_2 u_2 & &\mathrm{in} \ \RN.
			\end{aligned}\right.
	\end{equation}
Hence, $u=(u_1,u_2)$ is a standing wave solution of \eqref{eq:1.2}, that is, a solution of \eqref{eq:1.2} of the form 
$(\psi_1(t,x), \psi_2(t,x)) = ( e^{i\lambda_1 t} u_1 (x) , e^{i \lambda_2 t} u_2(x) )$.

	Recently, the problem \eqref{eq:1.1} and related topics are studied in various settings. 
Let us consider the case $V_1(x) \equiv 0 \equiv V_2(x)$ and we refer to \cite{B-15-1,GJ-16,MMP-10,
NW-11,NW-13,Oh-96}. 
In particular, Gou and Jeanjean \cite{GJ-16} proved that any minimizing sequence for \eqref{eq:1.1} 
is compact in $H$ up to translations. 
On the other hand, Gou \cite{Go-18} treated \eqref{eq:1.1} with partial confinements when $N=3$, that is, 
$V_1(x_1,x_2,x_3) = V_2(x_1,x_2,x_3) = x_1^2 + x_2^2$, 
and showed the compactness of minimizing sequence up to translations in $x_3$. 
The problem \eqref{eq:1.1} with $V_1(x),V_2(x) \to \infty$ as $|x| \to \infty$ was also studied 
in \cite{GLWZ-19-1,GLWZ-19-2,NTV-15}, 
and in these papers, the properties of minimizers were investigated. 
For other minimizing problems with multi-constraint conditions, 
we refer to \cite{AlB-13,Ar-18,B-15-2,B-16,B-17,B-19,BCP-18,Ga-15,I-14,WY-18}.

	The aim of this paper is to treat bounded potentials, that is, 
we assume that at least one of $V_1$ and $V_2$ is bounded. 
More precisely, we consider two cases: (i) both $V_1$ and $V_2$ satisfy 
(V1') below, (ii) $V_1$ and $V_2$ satisfy (V2') below.
	\begin{enumerate}
		\item[(V1')] 
			$V \in C(\RN,\R) \cap L^\infty (\RN,\R)$, 
			$V_{\infty} := \lim_{|y| \to \infty} V(y)  $ and $V(x) \leq V_\infty$ for all $x \in \RN$. 
		\item[(V2')]
			$V_1$ satisfies (V1'), 
			$V_2 \in C(\RN,\R)$ and $V_2(x) \to \infty$ as $|x| \to \infty$. 
	\end{enumerate}
We note that under (V2'), $H$ and $\ea$ are replaced by 
	\[
		\begin{aligned}
			\tea
			&:= \inf_{ u \in \wt{M}(\alpha) } E(u), 
			\quad \wt{M} (\alpha) := \wt{H} \cap M(\alpha)
			,\\
			\tH_2 &:= \Set{ u_2 \in H^1(\RN) | \int_{\RN} V_2(x) |u_2|^2 \rd x < \infty }, \quad 
			\tH := H^1 \times \tH_2 .
		\end{aligned}
	\]
For exponents $p_1,p_2,p_3$, we introduce:
	\begin{enumerate}
		\item[(p1)] $0 < p_1,p_2,p_3 < 2/N$. 
		\item[(p2)] When $N \geq 5$, $0<p_1,p_2 \leq 1/(N-2)$. 
	\end{enumerate}

	We first consider the case where both of $V_1$ and $V_2$ are bounded:

\begin{theorem}\label{theorem:1.1}
	Assume that $V_1$ and $V_2$ satisfy \textup{(V1')}, either $V_1(x)$ or $V_2(x)$ is not a constant function, 
	\textup{(p1)}, \textup{(p2)} and $\alpha_i > 0$ $(i=1,2)$. 
	Then every minimizing sequence $(u_n)$ for $\ea$, namely, 
	$(u_n) \subset M(\alpha)$ and $E(u_n) \to \ea$, 
	has a strongly convergent subsequence in $H$. Hence, $\ea$ is attained 
	and the set of all minimizers is compact in $H$. 
\end{theorem}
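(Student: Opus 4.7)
\emph{Proof strategy.} The plan is a concentration--compactness argument adapted to the two-constraint setting, the essential inputs being the comparison with the translation--invariant ``problem at infinity'' and a vector--valued strict subadditivity.

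First I would show that any minimizing sequence $(u_n) \subset M(\alpha)$ is bounded in $H$. The hypotheses $0 < p_1,p_2,p_3 < 2/N$ make the Gagliardo--Nirenberg exponents on $\int_{\RN} |u_{i,n}|^{2p_i+2} \rd x$ and on the coupling term sublinear in $\|\nabla u_{i,n}\|_{L^2}$; together with $\|V_i\|_{L^\infty}<\infty$ this gives coercivity of $E$ on $M(\alpha)$ and in particular $\ea > -\infty$. The role of (p2) is to ensure $H^1$--continuity of the nonlinear terms in high dimensions.

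Next I would compare $\ea$ with $e_\infty(\alpha)$, defined as the analogous infimum with each $V_i$ replaced by the constant $V_{i,\infty}$, and with $e_0(\alpha)$, the fully translation--invariant case $V_i \equiv 0$. On $M(\alpha)$ one has $e_\infty(\alpha) = e_0(\alpha) + \tfrac{1}{2}(V_{1,\infty}\alpha_1 + V_{2,\infty}\alpha_2)$, and by \cite{GJ-16} the infimum $e_0$ is attained by some $w = (w_1,w_2)$, is strictly negative, and is strictly subadditive in $\alpha$. Translating $w$ far from the origin and invoking $V_i(x) \to V_{i,\infty}$ gives $\ea \leq e_\infty(\alpha)$. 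I would upgrade this to the strict inequality $\ea < e_\infty(\alpha)$ using the hypothesis that either $V_1$ or $V_2$ is non-constant: since $V_{i,\infty} - V_i \geq 0$ is not identically zero while $w_i$ may be taken strictly positive, one has $\int_{\RN} (V_{i,\infty} - V_i)|w_i|^2 \rd x > 0$, so an appropriate translate of $w$ is an admissible test configuration with energy strictly below $e_\infty(\alpha)$.

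With this strict gap in hand I would run the concentration--compactness alternative on $|u_{1,n}|^2 + |u_{2,n}|^2$. Vanishing is ruled out by Lions' lemma (which forces the nonlinear parts of $E(u_n)$ to disappear) together with the observation that local mass vanishing combined with $V_i - V_{i,\infty} \to 0$ at infinity implies $\int V_i |u_{i,n}|^2 \rd x \to V_{i,\infty}\alpha_i$, yielding $\liminf E(u_n) \geq \tfrac{1}{2}\sum_i V_{i,\infty}\alpha_i > e_\infty(\alpha) \geq \ea$, a contradiction. Dichotomy requires the strict inequality $\ea < e(\beta) + e_\infty(\alpha - \beta)$ for every nontrivial splitting $\beta = (\beta_1,\beta_2)$ of $\alpha$; this is where the interaction estimates advertised in the abstract should enter and is, I expect, the main technical obstacle, because the coupling term $\beta|u_1|^{p_3+1}|u_2|^{p_3+1}$ ties the two constraints together and demands careful bookkeeping when the two components split at different rates.

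Once vanishing and dichotomy are excluded, there exist $y_n \in \RN$ with $u_n(\cdot - y_n)$ tight in $H$. Boundedness of $y_n$ is extracted from another use of the gap $\ea < e_\infty(\alpha)$: if $|y_n| \to \infty$, the potentials seen by the translated sequence converge to the constants $V_{i,\infty}$, the limiting energy is at least $e_\infty(\alpha) > \ea$, a contradiction. Hence $u_n \rightharpoonup u^\ast$ in $H$ with $u^\ast \in M(\alpha)$, and weak lower semicontinuity of the kinetic energy together with $E(u_n) \to E(u^\ast) = \ea$ forces $\|\nabla u_{i,n}\|_{L^2} \to \|\nabla u^\ast_i\|_{L^2}$, giving strong convergence in $H$ and proving that the entire set of minimizers is compact.
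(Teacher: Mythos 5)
Your overall architecture (coercivity, comparison with the problem at infinity, ruling out vanishing, ruling out dichotomy via strict subadditivity, then extracting a convergent subsequence) is the standard and correct frame, and it matches the paper's frame. But the proposal leaves the actual heart of the theorem unproved: you write that excluding dichotomy ``requires the strict inequality $\ea < e(\beta) + e_\infty(\alpha - \beta)$ for every nontrivial splitting'' and that ``this is where the interaction estimates should enter,'' without giving the argument. That inequality \emph{is} the theorem, in the sense that everything else is routine; the paper explicitly points out in the introduction that for $0 < p_3 < 1$ the classical Lions-type scaling argument for this inequality breaks down (it only works for $p_3 \geq 1$, see Appendix \ref{section:A}), so one cannot simply assert it. The paper's actual route is different from what you sketch: it does \emph{not} prove the binding inequality for an arbitrary splitting $\beta$ a priori. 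Instead, assuming dichotomy, it shows (Lemma \ref{lemma:3.6}) that the splitting is realized by genuine minimizers $u_0$ and $w_0$ of $e(\gamma_{u_0})$ and $e_\infty(\gamma_{w_0})$ which solve the Euler--Lagrange systems \eqref{eq:3.1} and \eqref{eq:3.3} with positive multipliers $\lambda_{0,i}$ (Lemma \ref{lemma:3.7}); it then derives sharp two-sided exponential decay rates for $u_0, w_0$ (Lemma \ref{lemma:3.1}, where the rate of the second component depends nontrivially on whether $p_3 \geq 1$ or $\lambda_{0,3} = (1+p_3)^2\lambda_{0,1}/(1-p_3)^2$ exceeds $\lambda_{0,2}$), proves a replacement for the $C^2$ Taylor expansion of $a^{p_3+1}b^{p_3+1}$ valid for $0<p_3<1$ (Lemma \ref{lemma:3.4}(ii)), and finally tests with $\tau_{n,i}(u_{0,i} + w_{0,i}(\cdot - n\mathbf{e}_1))$, showing via Bahri--Li convolution asymptotics (Lemma \ref{lemma:3.3}) that the attractive cross terms dominate the $o(\kappa_n^{\theta_0})$ errors. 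None of this bookkeeping is optional: the comparison between $\kappa_{n,1}$, $\kappa_{n,2}$ and the leading interaction term is exactly where the case distinctions on $p_3$ and on which components vanish are resolved.

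A secondary but genuine error: you say (p2) is needed for ``$H^1$-continuity of the nonlinear terms in high dimensions.'' That is already guaranteed by (p1), since $2p_i + 2 < 2 + 4/N < 2^*$. The hypothesis (p2), namely $p_1, p_2 \leq 1/(N-2)$ for $N \geq 5$, is used for something else entirely: to invoke the Liouville-type nonexistence result for $-\Delta u \geq \mu u^{2p+1}$ with $2p+1 \leq N/(N-2)$, which is how the paper proves the Lagrange multipliers $\lambda_{0,i}$ are strictly positive (Lemma \ref{lemma:3.7}); positivity of the multipliers is in turn what makes the exponential decay estimates, and hence the whole interaction argument, available.
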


	Next, let us consider the case where $V_1$ is bounded and $V_2(x) \to \infty$ as $|x| \to \infty$:

\begin{theorem}\label{theorem:1.2}
	Assume \textup{(V2')}, \textup{(p1)} and $\alpha_i > 0$ $(i=1,2)$. 
	Then, every minimizing sequence $(u_n)\subset \wt{M}(\alpha)$ for $\tea$ has 
	a strongly convergent subsequence in $\tH$.
	Hence, $\tea$ is attained and the set of all the minimizers is compact in $\tH$. 
\end{theorem}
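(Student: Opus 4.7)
The plan is to run concentration-compactness exploiting the confining potential: since $V_2(x) \to \infty$, the embedding $\tH_2 \hookrightarrow L^q(\RN)$ is compact for every $q \in [2, 2^*)$, so the $u_2$-component of any bounded sequence is automatically strongly convergent and no translation mode need be extracted. First I would establish boundedness in $\tH$ of any minimizing sequence $(u_n) = (u_1^n, u_2^n) \subset \wt M(\alpha)$: by (p1) and Gagliardo--Nirenberg each nonlinear term in $E(u_n)$ is absorbed by an arbitrarily small multiple of $\|\nabla u_1^n\|_{L^2}^2 + \|\nabla u_2^n\|_{L^2}^2$ plus an $\alpha$-dependent constant, which together with the $L^2$-constraints and the coercivity of $V_2$ yields the bound. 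Extracting weak limits $u_1^n \rightharpoonup u_1^\ast$ in $H^1$ and $u_2^n \rightharpoonup u_2^\ast$ in $\tH_2$, the compact embedding provides $u_2^n \to u_2^\ast$ strongly in every $L^q$ with $q \in [2, 2^*)$, in particular $\|u_2^\ast\|_{L^2}^2 = \alpha_2$, and $\int \beta |u_1^n|^{p_3+1}|u_2^n|^{p_3+1} \rd x \to \int \beta |u_1^\ast|^{p_3+1}|u_2^\ast|^{p_3+1} \rd x$ via H\"older and the bounded weak convergence of $u_1^n$ in $L^{2(p_3+1)}$.

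The decisive step is to force $\|u_1^\ast\|_{L^2}^2 = \alpha_1$. Setting $\gamma := \alpha_1 - \|u_1^\ast\|_{L^2}^2 \in [0,\alpha_1]$ and assuming for contradiction $\gamma > 0$, I would apply Brezis--Lieb to the $u_1^n$-dependent terms, including $\int V_1 |u_1^n|^2 \rd x$ by splitting $V_1 = V_{1,\infty} - (V_{1,\infty} - V_1)$ and using $V_{1,\infty} - V_1 \to 0$ at infinity together with the local $L^2$-convergence of $u_1^n \to u_1^\ast$, to derive
\[
E(u_n) = E(u_1^\ast, u_2^\ast) + E_\infty(u_1^n - u_1^\ast) + o(1),
\]
where $E_\infty(w) := \int_{\RN} \bigl( \tfrac{1}{2}|\nabla w|^2 + \tfrac{V_{1,\infty}}{2}|w|^2 - \tfrac{\mu_1}{2p_1+2}|w|^{2p_1+2} \bigr) \rd x$. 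Taking infima yields the lower bound $\tea(\alpha) \geq \tea(\alpha_1 - \gamma, \alpha_2) + e_\infty^1(\gamma)$, where $e_\infty^1(\gamma)$ denotes the ground-state energy for the single NLS at infinity with constant potential $V_{1,\infty}$.

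The main obstacle is then to prove the complementary strict inequality $\tea(\alpha) < \tea(\alpha_1 - \gamma, \alpha_2) + e_\infty^1(\gamma)$ for every $\gamma \in (0, \alpha_1]$. Given an approximate minimizer $(\wt u_1, \wt u_2)$ of $\tea(\alpha_1 - \gamma, \alpha_2)$ and an actual minimizer $w$ of $e_\infty^1(\gamma)$ (which exists by the classical $L^2$-subcritical theory), I would build a trial function for $\tea(\alpha)$ by translating $w$ to $w_R(\cdot) = w(\cdot - R\xi)$ with $R$ and $\xi$ chosen so that $w_R$ retains enough overlap with the localized $\wt u_2$ to activate a strictly negative coupling contribution $-\tfrac{\beta}{p_3+1}\int|w_R|^{p_3+1}|\wt u_2|^{p_3+1}\rd x$, while keeping the cross-kinetic and cross-nonlinear interactions between $\wt u_1$ and $w_R$ controllable via careful interaction estimates (the tool advertised in the paper's keywords); the $L^2$-constraints are restored by an $o(1)$-normalization. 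Combined with the non-positive potential swap $\tfrac{1}{2}\int(V_1 - V_{1,\infty})|w_R|^2 \rd x \leq 0$, this produces the required strict gain. Once $\gamma = 0$ is forced, $u_1^n \to u_1^\ast$ strongly in $L^2$, and combined with energy convergence $E(u_n) \to E(u^\ast) = \tea(\alpha)$, weak lower semicontinuity of the quadratic forms, and a Gagliardo--Nirenberg argument that upgrades $\|u_1^n - u_1^\ast\|_{L^2} \to 0$ to $\|\nabla(u_1^n - u_1^\ast)\|_{L^2} \to 0$, one obtains strong convergence of $u_n \to u^\ast$ in $\tH$.
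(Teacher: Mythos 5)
Your overall architecture coincides with the paper's: compactness of the $u_2$-component via the confining potential, boundedness, dichotomy only in the $u_1$-component, and an attempt to exclude dichotomy by gluing a translated scalar ground state onto the remaining profile with interaction estimates. The preliminary steps (boundedness, compact embedding, the Brezis--Lieb lower bound $\tea \geq \wt{e}(\alpha_1-\gamma,\alpha_2) + e_1(\gamma)$) are sound, modulo the remark that the convergence of the coupling term should be justified by a.e.\ convergence plus boundedness of $|u_1^n|^{p_3+1}$ in $L^2$ rather than by ``weak convergence of $u_1^n$ in $L^{2(p_3+1)}$.''

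The genuine gap is in the decisive step. You propose to prove the strict inequality $\tea < \wt{e}(\alpha_1-\gamma,\alpha_2) + e_1(\gamma)$ for \emph{every} $\gamma\in(0,\alpha_1]$ by gluing the ground state $w$ of $e_1(\gamma)$ onto an \emph{approximate} minimizer $(\wt u_1,\wt u_2)$ of $\wt{e}(\alpha_1-\gamma,\alpha_2)$. This cannot be closed as described. The strict gain you invoke, $-\tfrac{\beta}{p_3+1}\int |w_R|^{p_3+1}|\wt u_2|^{p_3+1}\,\rd x$ (or the potential term $\tfrac12\int(V_1-V_{1,\infty})|w_R|^2\,\rd x$), tends to $0$ as $R\to\infty$, while the costs you must beat --- the $\e$-error of the approximate minimizer, the $L^2$-renormalization $\tau_R-1\sim -\langle \wt u_1, w_R\rangle_{L^2}/\alpha_1$, and the cross-nonlinear terms between $\wt u_1$ and $w_R$ --- are not comparable to it, because an approximate minimizer $\wt u_1$ satisfies no equation and has no decay whatsoever; there is no rate at which any of these quantities vanish. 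The paper avoids exactly this by \emph{not} proving strict subadditivity for all $\gamma$: it first shows (Lemma \ref{lemma:4.2}, using the known strict subadditivity of the scalar $e_1$) that if dichotomy occurs then \emph{both} limit profiles $u_0$ and $w_{0,1}$ are actual minimizers of their subproblems, hence solve the Euler--Lagrange equations; it then proves $\lambda_{0,1}>0$ via Pohozaev (Lemma \ref{lemma:4.3}) and derives two-sided exponential decay \eqref{eq:4.7}. Only with these decay rates can one show, via Lemmas \ref{lemma:3.3} and \ref{lemma:3.4}, that the dominant negative interaction $-\mu_1\int w_n^{2p_1+1}u_{0,1}\,\rd x \gtrsim (1+n)^{-(N-1)/2}e^{-\sqrt{\lambda_{0,1}}\,n}$ beats the error $O(\kappa_n^\theta)$ with $\theta>1$ and $\kappa_n \lesssim e^{-\sqrt{\lambda}\,n}$. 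Note also that when $u_{0,1}\not\equiv 0$ the decisive gain is this $\mu_1$-self-interaction of the two $u_1$-profiles, not the $\beta$-coupling with $u_{0,2}$; the coupling term carries the argument only in the degenerate case $u_{0,1}\equiv 0$, where there is no renormalization cost at all. Your proposal is missing the identification of the dichotomy profiles as minimizers, the positivity of the Lagrange multiplier, and the decay estimates --- the three ingredients without which the interaction estimate has nothing to bite on.
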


	\begin{remark}
		\begin{enumerate}
			\item In Theorem \ref{theorem:1.2}, the condition (p2) is not necessary. 
			\item Even if we exchange the roles of $V_1(x)$ and $V_2(x)$, Theorem \ref{theorem:1.2} still holds. 
		\end{enumerate}
	\end{remark}

Finally we state one property of a set of minimizers in Theorems \ref{theorem:1.1} and \ref{theorem:1.2}. 
Since this property can be proved as in Montefusco, Pellacci and Squassina \cite{MPS-10}, we omit the proof for it. 
	\begin{proposition}\label{proposition:1.4}
	Let $\alpha_i > 0$ $(i=1,2)$ and set 
	\[
	\ma := \Set{ u \in M(\alpha) | 
	E(u) = \ea }.
	\]
	Then 
	\[
	\ma = \Set{ \left( e^{i \theta_1} w_1, e^{i \theta_2} w_2  \right) 
	 | 
	 0 \leq \theta_1,\theta_2 \leq 2 \pi, \ w_1 (x), w_2(x) > 0, \ (w_1,w_2) \in \ma }.
	\]
	The same claim holds for $\wt{e}(\alpha)$: 
	\[
		\begin{aligned}
			\wt{\mathcal{M}} (\alpha) 
			:&= \Set{  u \in \wt{M}(\alpha) | 
				E(u) = \tea  
			}
		\\
			&= \Set{ \left( e^{i \theta_1} w_1 , e^{i \theta_2} w_2  \right) | 
				0 \leq \theta_1, \theta_2 \leq 2 \pi, \ w_1(x), w_2(x) > 0, 
			\ (w_1,w_2) \in \wt{\mathcal{M}} (\alpha)  }.
		\end{aligned}		
	\]
\end{proposition}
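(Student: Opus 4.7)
The plan is to combine the diamagnetic inequality with the strong maximum principle applied to the Euler--Lagrange system, following Montefusco--Pellacci--Squassina \cite{MPS-10}. One inclusion is immediate: since $E$ is invariant under independent phase rotations of each component (every term of $E$ depends on $u$ only through $|u_1|$ and $|u_2|$), multiplying a positive real minimizer componentwise by $e^{i\theta_1}$ and $e^{i\theta_2}$ yields another element of $\ma$. The content of the proposition is the reverse inclusion.

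Given $u=(u_1,u_2) \in \ma$, set $w_i := |u_i|$. Then $w_i \in H^1(\RN,\R)$ with $\| w_i \|_{L^2} = \| u_i \|_{L^2}$, so $(w_1,w_2) \in M(\alpha)$. By the diamagnetic inequality $\bigl| \nabla |u_i| \bigr| \leq |\nabla u_i|$ a.e., together with the form of the remaining terms, one has $E(w_1,w_2) \leq E(u_1,u_2) = \ea$. Hence $(w_1,w_2) \in \ma$ as well, and equality must hold in the diamagnetic inequality. The standard equality case then yields $u_i = e^{i\theta_i} w_i$ for a measurable $\theta_i$ that is (a.e.) constant on each connected component of $\{ w_i > 0 \}$.

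It remains to prove $w_i > 0$ on $\RN$, after which the connectedness of $\RN$ promotes each $\theta_i$ to a global constant and produces the desired representation. As a minimizer, $(w_1,w_2)$ satisfies \eqref{eq:1.3} for some $\lambda_1,\lambda_2 \in \R$, so
\[
	-\Delta w_i + \bigl( V_i(x) + \lambda_i \bigr) w_i = \mu_i w_i^{2p_i+1} + \beta w_i^{p_3} w_{3-i}^{p_3+1} \geq 0.
\]
A Brezis--Kato/Moser bootstrap under \textup{(p1)} (and \textup{(p2)} when $N \geq 5$) gives $w_i \in L^\infty_{\rm loc}(\RN)$, so the coefficient of $w_i$ on the left-hand side and the right-hand side are locally bounded. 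Vazquez's strong maximum principle then yields $w_i > 0$ everywhere on $\RN$.

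The principal technical point is the positivity step: once local boundedness of the right-hand side and of $V_i + \lambda_i$ is secured, the rest is a direct application of the equality case of the diamagnetic inequality. In case \textup{(V2')} the unbounded potential $V_2$ poses no obstacle, since Vazquez's principle is local and $V_2 \in C(\RN)$ is bounded on compact sets.
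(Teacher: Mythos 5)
Your argument is correct and follows exactly the route the paper intends: the paper omits the proof of Proposition \ref{proposition:1.4}, delegating it to Montefusco--Pellacci--Squassina \cite{MPS-10}, and your combination of the diamagnetic inequality (with its equality case), the Euler--Lagrange system \eqref{eq:1.3}, and the strong maximum principle is precisely that argument. The only cosmetic remark is that once local boundedness of $w_i$ and of $V_i+\lambda_i$ is established, the classical strong maximum principle for $-\Delta w_i + c(x)w_i \geq 0$ with $c \in L^\infty_{\rm loc}$ already suffices; Vazquez's version is not needed here.
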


	Let us state differences between the known results and ours, and difficulties to prove 
Theorem \ref{theorem:1.1} (here we force on Theorem \ref{theorem:1.1}).  
Since we deal with bounded potentials, the embedding $H \subset L^2(\RN) \times L^2(\RN)$ 
is not compact. According to Lions \cite{L-84-1,L-84-2}, we notice that 
the compactness of any minimizing sequence for $\ea$ is equivalent to 
the strict subadditivity of $e(\alpha)$: 
for all $(\theta_1,\theta_2) \in [0,1] \times [0,1]$ with $(\theta_1,\theta_2) \neq (1,1)$, 
	\begin{equation}\label{eq:1.4}
		e \left( \alpha_1 , \alpha_2 \right) 
		< e \left( \theta_1 \alpha_1 , \theta_2 \alpha_2 \right) 
		+ e_\infty \left( (1-\theta_1) \alpha_1 , (1-\theta_2) \alpha_2 \right)
	\end{equation}
where 
	\[
		\begin{aligned}
			e_\infty(\alpha) &:= 
			\inf_{u \in M(\alpha)} E_\infty(u), \quad 
			E_\infty(u) :=  \int_{\RN} 
			\sum_{i=1}^2 \left(  \frac{|\nabla u_i|^2}{2}  - \frac{\mu_i}{2p_i+2} |u_i|^{2p_i+2} \right) 
			- \frac{\beta}{p_3+1} |u_1 u_2 |^{p_3+1} \rd x
			.
		\end{aligned}
	\]
Here we note that when $p_3 \geq 1$, by following the argument in \cite[p.228--p.229]{L-84-2} 
(see also \cite{B-17,C-03} and Appendix \ref{section:A}), 
it is possible to prove \eqref{eq:1.4} under the assumption $V_1(x), V_2(x) \leq 0$ and 
$V_1(x), V_2(x) \to 0$ as $|x| \to \infty$. 
However, when $0<p_3<1$, it is not straightforward to exploit the argument in \cite{L-84-2} 
and we face a difficulty in proving \eqref{eq:1.4}.

	In the cases $V_1(x) \equiv 0 \equiv V_2(x)$ \cite{GJ-16} and 
$V_1(x_1,x_2,x_3) = x_1^2 + x_2^2 = V_2(x_1,x_2,x_3) $ \cite{Go-18}, 
to get \eqref{eq:1.4}, they use the coupled rearrangement due to Shibata \cite{S-17}. 
However, in our setting, this technique is not applicable 
since we do not suppose any symmetry of $V_1(x)$ and $V_2(x)$.

	In this paper, we show \eqref{eq:1.4} by combining the scheme in \cite{S14,I-14} and 
the interaction estimate in Bahri-Li \cite{BL-90}. 
This approach is also taken in \cite{IM-19} for a minimizing problem corresponding to 
a scalar equation with potential and general nonlinearity. 
Compared to the scalar case \cite{IM-19}, we have the following differences. 
First, we need to observe the decay estimate of 
nonnegative solutions to \eqref{eq:1.3}. This will be discussed in subsection \ref{section:3.1} and 
we point out that the decay properties of solutions to \eqref{eq:1.3} 
depend on the size of $p_3$, $\lambda_1$ and $\lambda_2$. 
Next, we need to prepare some inequality which is used in the interaction estimate for the term 
$\int_{\RN} u_1^{p_3+1} u_2^{p_3+1} \rd x$. 
We emphasize that when $0<p_3<1$, the term $u_1^{p_3+1} u_2^{p_3+1}$ 
is not of class $C^2$, hence, we need a careful analysis. See Lemma \ref{lemma:3.4} (ii).

	This paper is organized as follows. 
In section \ref{section:2}, we do some preparations which are necessary for proofs of 
Theorems \ref{theorem:1.1} and \ref{theorem:1.2}. 
In section \ref{section:3}, we observe the decay properties of solutions of \eqref{eq:1.3} and 
prove useful inequalities and Theorem \ref{theorem:1.1}. 
Section \ref{section:4} is devoted to a proof of Theorem \ref{theorem:1.2} and 
in Appendix \ref{section:A}, we provide a proof of \eqref{eq:1.4} when $p_3 \geq 1$ by following the arguments 
in \cite{L-84-2} (cf. \cite{B-17,C-03}).

\section{Preliminaries}
\label{section:2}

	In this section, we shall prove several results which are necessary 
for the proofs of Theorems \ref{theorem:1.1} and \ref{theorem:1.2}. 
Similar results are used in \cite{I-14,IM-19}.

We first notice that for every $b_1,b_2 \in \R$, 
the minimizing problem for $e(\alpha)$ (resp. $\tea$) is equivalent to 
	\[
		\begin{aligned}
			&\inf \left\{ E(u)  - \sum_{i=1}^2 \frac{b_i}{2} \| u_i \|_{L^2}^2 \ \bigg| \ 
			u \in M(\alpha)   \right\} = e(\alpha) - \sum_{i=1}^2 \frac{b_i}{2} \alpha_i
			,
			\\
			&\left( \text{resp.} \ \ 
			\inf \left\{ E(u)  -  \sum_{i=1}^2 \frac{b_i}{2} \| u_i \|_{L^2}^2 \ \bigg| \ 
			u \in \wt{M}(\alpha)   \right\} 
			= \tea -  \sum_{i=1}^2 \frac{b_i}{2} \alpha_i
			 \right).
		\end{aligned}
	\]
Therefore, without loss of generality, we may assume the following conditions 
instead of (V1') and (V2'): 
\begin{enumerate}
	\item[(V1)] 
		$V \in C(\RN,\R)$, $V(x) \leq 0$ for all $x \in \RN$ and $\lim_{|x| \to \infty} V(x) = 0$. 
	\item[(V2)] 
		$V_1$ satisfies (V1),  
		$V_2 \in C(\RN,\R)$, $1 = \inf_{x \in \RN} V_2(x)$ and 
		$V_2(x) \to \infty$ as $|x| \to \infty$. 
\end{enumerate}

	We set 
	\begin{equation}\label{eq:2.1}
		\la u_i, v_i \ra_{V_i} := \RE  \int_{\RN} \nabla u_i \cdot \ov{\nabla v_i} \rd x + 
		\RE \int_{\RN} V_i(x) u_i \ov{v_i} \rd x, \quad 
		\| u_i \|_{V_i}^2 := \la u_i , u_i \ra_{V_i} .
	\end{equation}
Even though the quantity $\| \cdot \|_{V_i}$ may not be a norm on $H^1$, 
this notation will be convenient in subsection \ref{section:3.3}. 
Remark that  under (V2), $\| \cdot \|_{V_2}$ is a norm on $\tH_2$ and 
$(\tH_2, \la \cdot, \cdot \ra_{V_2} )$ is a Hilbert space over $\R$.

	To prove Theorem \ref{theorem:1.1}, we begin with the following lemma:
	\begin{lemma}\label{lemma:2.1}
		Assume \textup{(p1)}. 
		\begin{enumerate}
			\item 
				Suppose \textup{(V1)}. Then $-\infty < \ea$ holds and 
				every minimizing sequence for $\ea$ is bounded in $H$. 
				If we assume \textup{(V2)} instead of \textup{(V1)}, 
				then $-\infty < \tea$ and any minimizing sequence for $\tea$ is bounded in $\tH$.
			\item 
				Under \textup{(V1)}, $\ea < 0$ holds .
			\item 
				Suppose \textup{(V1)}. Then $\ea \leq e(\gamma)  + e_\infty(\alpha-\gamma)$
				for any $\gamma = (\gamma_1,\gamma_2) \in [0,\alpha_1] \times [0,\alpha_2]$ 
				where 
					\[
						\begin{aligned}
							e_\infty(\alpha) &:= 
							\inf \Set{ E_\infty(u) | u \in M(\alpha) }, 
							\\
							E_\infty(u) &:= 	\frac{1}{2}\int_{\RN} \sum_{i=1}^2 |\nabla u_i|^2  \rd x 
								- \int_{\RN} \frac{\mu_1 |u_1|^{2p_1+2}}{2p_1 + 2} 
								+ \frac{\mu_2 |u_2|^{2p_2+2}}{2p_2 + 2}
								+ \frac{\beta |u_1|^{p_3+1} |u_2|^{p_3+1} }{p_3+1}  \rd x. 
						\end{aligned}
					\]
				If we suppose \textup{(V2)} instead of \textup{(V1)}, then 
				$\tea \leq \wt{e}( \alpha_1 - \gamma, \alpha_2 ) + e_1(\gamma)$ for all $\gamma \in [0,\alpha_1]$ where 
					\[
						\begin{aligned}
							e_1 (\gamma) 
							&:= \inf \Set{ E_1(u_1) | u_1 \in H^1(\RN), \ \| u_1 \|_{L^2}^2 = \gamma },
							\\ 
							E_1(u_1) 
							&:= \frac{1}{2} \int_{\RN} | \nabla u_1 |^2 \rd x 
							- \frac{\mu_2}{2p_2+2} \int_{\RN} |u_1|^{2p_2+2} \rd x.
						\end{aligned}
					\]
		\end{enumerate}
	\end{lemma}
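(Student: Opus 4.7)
For (i), I would use the $L^2$-subcriticality of all three nonlinearities. Since $0<p_1,p_2,p_3<2/N$, the Gagliardo--Nirenberg inequality gives
\[
\|u_i\|_{L^{2p_i+2}}^{2p_i+2} \le C(\alpha_i)\|\nabla u_i\|_{L^2}^{Np_i},\quad \int_{\RN}|u_1|^{p_3+1}|u_2|^{p_3+1}\rd x \le C(\alpha)\|\nabla u_1\|_{L^2}^{Np_3/2}\|\nabla u_2\|_{L^2}^{Np_3/2},
\]
with all gradient exponents strictly below $2$. Under (V1), $|\int V_i|u_i|^2\rd x| \le \|V_i\|_\infty \alpha_i$, so on $M(\alpha)$ one obtains the coercive bound $E(u) \ge \tfrac14\sum_i\|\nabla u_i\|_{L^2}^2 - C(\alpha)$. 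This yields $\ea>-\infty$ and the $H$-boundedness of every minimizing sequence. Under (V2), $\int V_2|u_2|^2\rd x\ge 0$ is absorbed into $\|u_2\|_{V_2}^2$, and the same GN estimates give coercivity of $E$ on $\wt{M}(\alpha)$ with respect to $\tH$.

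For (ii), I would test with an $L^2$-preserving dilation. Fix $u\in M(\alpha)$ with $u_i\not\equiv 0$, set $u_{i,t}(x):=t^{N/2}u_i(tx)$, and compute
\[
E(u_t) = \frac{t^2}{2}\sum_{i=1}^2\|\nabla u_i\|_{L^2}^2 + \frac12\sum_{i=1}^2\int_{\RN} V_i(y/t)|u_i|^2 \rd y - \sum_{i=1}^2\frac{\mu_i t^{Np_i}}{2p_i+2}\|u_i\|_{L^{2p_i+2}}^{2p_i+2} - \frac{\beta t^{Np_3}}{p_3+1}\int_{\RN}|u_1|^{p_3+1}|u_2|^{p_3+1}\rd x.
\]
As $t\to 0^+$, the gradient term is $O(t^2)$ and the potential integrals tend to $0$ by dominated convergence (since $V_i(y/t)\to 0$ pointwise and $V_i$ is bounded), while each negative nonlinear contribution is of order $t^{Np_j}$ with $Np_j<2$ and hence dominates $t^2$. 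Therefore $E(u_t)<0$ for all sufficiently small $t$, proving $\ea<0$.

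For (iii), my strategy is a translation--splitting construction. Fix $\varepsilon>0$ and choose $v\in M(\gamma)$ with $E(v)\le e(\gamma)+\varepsilon$ and $w\in M(\alpha-\gamma)$ with $E_\infty(w)\le e_\infty(\alpha-\gamma)+\varepsilon$; by $H^1$-density I may assume all four components $v_i,w_i$ are compactly supported in a common ball $B_R$. For $|y|\ge 2R+1$ the supports of $v_i$ and $w_j(\cdot-y)$ are pairwise disjoint for all $i,j\in\{1,2\}$, so $u_y:=v+w(\cdot-y)$ lies in $M(\alpha)$, all pure-power and coupling integrals split cleanly into a $v$-piece and a $w$-piece, and
\[
E(u_y) = E(v) + E_\infty(w) + \frac12\sum_{i=1}^2\int_{\RN}V_i(x+y)|w_i(x)|^2\rd x.
\]
The remainder tends to $0$ as $|y|\to\infty$ by dominated convergence, giving $\ea\le E(u_y)\to E(v)+E_\infty(w)\le e(\gamma)+e_\infty(\alpha-\gamma)+2\varepsilon$, and letting $\varepsilon\to 0$ completes the (V1) case.

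The (V2) analogue is almost identical, but only the first component is split: take $u_y:=(v_1+\phi_1(\cdot-y),\,v_2)$ with $v\in\wt{M}(\alpha_1-\gamma,\alpha_2)$ near-minimizing for $\tea$ and $\phi_1\in H^1(\RN)$ compactly supported and near-minimizing for $e_1(\gamma)$. The new ingredient is the cross coupling $\int|\phi_1(x-y)|^{p_3+1}|v_2(x)|^{p_3+1}\rd x$, which is dominated by $\|\phi_1\|_\infty^{p_3+1}\int_{B_R(y)}|v_2|^{p_3+1}\rd x$ and vanishes as $|y|\to\infty$ since $v_2\in L^{p_3+1}(\RN)$ has vanishing tails. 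The step requiring the most care is arranging the quadruple support disjointness in (iii) so that the coupling nonlinearity $|u_1|^{p_3+1}|u_2|^{p_3+1}$ decomposes without cross contributions; in the (V2) setting this role is played by the $L^{p_3+1}$-tail estimate on $v_2$.
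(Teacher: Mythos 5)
Your parts (i) and (iii) follow essentially the paper's own argument: (i) is the same Gagliardo--Nirenberg coercivity estimate, and (iii) is the same translation--splitting construction with compactly supported near-minimizers (the paper writes out only the (V1) half and declares the (V2) half ``similar''; you supply the missing details, and your tail estimate on the cross coupling $\int |\phi_1(\cdot-y)|^{p_3+1}|v_2|^{p_3+1}\,\rd x$ is correct, though strictly unnecessary -- that term enters $E$ with a minus sign and $\beta>0$, so it can simply be discarded for the upper bound).

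Part (ii) is where you genuinely diverge. The paper observes that $V_i\le 0$ gives $E(u)\le E_\infty(u)$ pointwise on $M(\alpha)$, hence $e(\alpha)\le e_\infty(\alpha)$, and then quotes Gou--Jeanjean for $e_\infty(\alpha)<0$; you instead give a self-contained $L^2$-preserving dilation argument, which is more elementary and avoids the external citation. However, your treatment of the potential term has a small logical gap as written: knowing only that $\int V_i(y/t)|u_i(y)|^2\,\rd y\to 0$ as $t\to 0^+$ does not suffice, because if this quantity were positive and decayed more slowly than $t^{Np_j}$ it would dominate the negative nonlinear contribution and destroy the sign of $E(u_t)$. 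The comparison you need is between the potential term and $t^{Np_j}$, not between $t^2$ and $t^{Np_j}$. The repair is immediate under the hypothesis actually in force: the normalized condition (V1) of Section 2 gives $V_i\le 0$, so the potential integral is nonpositive and can be dropped from the upper bound, yielding $E(u_t)\le \tfrac{t^2}{2}\sum_i\|\nabla u_i\|_{L^2}^2 - c\,t^{Np_{j}}<0$ for small $t$. With that one-line fix your scaling argument is complete and correct.
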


	\begin{proof}
(i) 
We first prove the assertion for $\ea$. 
Let $(u_n) \subset M(\alpha)$ satisfy $E(u_n) \to \ea$. 
It suffices to show that $(\nabla u_{n,i})$ ($i=1,2$) are bounded in $L^2(\RN)$. 
By Gagliardo-Nirenberg's inequality, 
	\[
		\| u \|_{L^{2p+2}}^{2p+2} \leq 
		C \| \nabla u \|_{L^2}^{\theta(2p+2)} \| u \|_{L^2}^{(1-\theta)(2p+2)},
		\quad \theta := \left( \frac{1}{2} - \frac{1}{2p + 2} \right) N.
	\]
Thus if $p < 2/N$, then one sees that 
$(2p+2) \theta = N p < 2$. Since $V_i(x) \leq 0$ and 
	\begin{equation}\label{eq:2.2}
		\begin{aligned}
			E(u_n) &\geq  
			\frac{1}{2} \left( \| \nabla u_{n,1} \|_{L^2}^2 
			+ \| \nabla u_{n,2} \|_{L^2}^2 \right) 
			+ \frac{1}{2} \int_{\RN} V_1(x) |u_{n,1}|^2 + V_2(x) |u_{n,2} |^2 \rd x 
			\\
			&\qquad 
			- \frac{\mu_1 \| u_{n,1}\|_{L^{2p_1+2}}^{2p_1+2} }{2p_1 + 2}
			- \frac{\mu_2 \| u_{n,2}\|_{L^{2p_2+2}}^{2p_2+2} }{2p_2 + 2}
			- \frac{\beta \| u_{n,1}\|_{L^{2p_3+2}}^{p_3+1} \| u_{n,2} \|_{L^{2p_3+2}}^{p_3+1} }
			{p_3+1}
			\\
			&\geq 
			\frac{1}{2} \sum_{i=1}^2  \| \nabla u_{n,i} \|_{L^2}^2 
			- C \left( \| \nabla u_{n,1} \|_{L^2}^{N p_1 } + \| \nabla u_{n,2} \|_{L^2}^{Np_2}
			+ \| \nabla u_{n,1} \|_{L^2}^{Np_3/2} \| \nabla u_{n,2} \|_{L^2}^{Np_3/2} 
			 \right),
		\end{aligned}
	\end{equation}
we have $\ea > - \infty$ and $(\nabla u_{n,i})$ ($i=1,2$) are bounded in $L^2(\RN)$.

	On the other hand, when (V2) holds, 
as in the above, we see that $(\| \nabla u_{n,i} \|_{L^2} )$ ($i=1,2$) are bounded. 
Furthermore, from \eqref{eq:2.2}, 
it follows that $\sup_{n \geq 1} \int_{\RN} V_2(x) |u_{n,2}|^2 \rd x < \infty$, 
hence,  $(u_n)$ is bounded in $\tH$.

(ii) By (V1), for any $u \in M(\alpha)$, it is immediate to check $E(u) \leq E_\infty(u)$, 
which yields $\ea  \leq e_\infty(\alpha)$. 
According to \cite{GJ-16}, it is known that 
$e_\infty(\alpha) < 0$. Hence, we have 
$e(\alpha) < 0$.

	(iii) Since we may prove both of inequalities in a similar way, 
we only show the desired inequality for $\ea$.  For every $\e>0$ and $\gamma \in [0,\alpha_1] \times [0,\alpha_2]$, 
we may find $\varphi_\e, \psi_\e \in (C^\infty_0(\RN))^2$ so that 
	\[
		\varphi_\e \in M(\gamma), \ 
		\psi_\e \in M(\alpha - \gamma), \ 
		E(\varphi_\e) \leq e(\gamma) + \e, \ 
		E_\infty(\psi_\e) \leq e_\infty(\alpha - \gamma) + \e.
	\]
Set $u_{\e,n}(x) := \varphi_\e(x) + \psi_\e(x - n\mathbf{e}_1)$. 
Since $\varphi_\e,\psi_\e$ have compact support, 
we have $u_{\e,n} \in M(\alpha)$ for sufficiently large $n$ and 
$\ea \leq E(u_{\e,n}) = E(\varphi_\e) + E(\psi_\e(\cdot - n\mathbf{e}_1))$. 
By letting $n \to \infty$ and noting $E_\infty( u ( \cdot - y ) ) = E_\infty(u)$ for all $y \in \RN$, 
(V1) gives 
	\[
		\ea \leq E(\varphi_\e) + E( \psi_\e(\cdot - n \mathbf{e}_1 ) ) 
		\leq E(\varphi_\e) + E_\infty(\psi_\e (\cdot - n \mathbf{e}_1) ) \leq 
		e(\gamma) + e_\infty(\alpha - \gamma) + 2 \e.
	\]
Since $\e>0$ is arbitrary, the desired inequality for $\ea$ holds. 
	\end{proof}

	Next, we show that it is enough to consider a minimizing sequence 
whose components are nonnegative. 

	\begin{lemma}\label{lemma:2.2}
		Under \textup{(p1)} and \textup{(V1)}, 
		let $(u_n) \subset M(\alpha)$ satisfy $E(u_n) \to \ea$. 
		Then $|u_n|(x) := (|u_{n,1}(x)|, |u_{n,2}(x)|)$ is also 
		a minimizing sequence. Moreover, if $(|u_n|)$ has a 
		strongly convergent subsequence in $H$, so is $(u_n)$. 
		
		The same assertions hold when we suppose \textup{(V2)} instead of \textup{(V1)}.
	\end{lemma}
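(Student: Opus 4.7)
My plan is a two-step argument: use the diamagnetic (Kato) inequality to see $E(|u_n|) \le E(u_n)$, so that $(|u_n|)$ is automatically a minimizing sequence, and then upgrade strong convergence of $(|u_n|)$ to strong convergence of $(u_n)$ by forcing saturation in that very inequality.

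For the first assertion, I would invoke Kato's inequality $|\nabla |v||^2 \leq |\nabla v|^2$ a.e.\ in $\RN$, valid for every $v \in H^1(\RN,\C)$. All remaining pieces of $E$ (namely $V_i|u_{n,i}|^2$, $|u_{n,i}|^{2p_i+2}$, and the coupling $|u_{n,1}|^{p_3+1}|u_{n,2}|^{p_3+1}$) depend only on moduli, and so does the $L^2$ constraint. Thus $|u_n|\in M(\alpha)$ and $E(|u_n|) \leq E(u_n)$; squeezing $\ea \leq E(|u_n|) \leq E(u_n) \to \ea$ simultaneously shows that $(|u_n|)$ is minimizing and that the non-negative diamagnetic gaps $\|\nabla u_{n,i}\|_{L^2}^2 - \|\nabla |u_{n,i}|\|_{L^2}^2$ each tend to $0$.

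For the second assertion, suppose that along a subsequence $|u_n| \to w = (w_1, w_2)$ strongly in $H$. Since $(u_n)$ is bounded in $H$ by Lemma \ref{lemma:2.1}(i), I would extract a further subsequence so that $u_n \rightharpoonup u = (u_1,u_2)$ weakly in $H$ and, via Rellich--Kondrachov plus a standard diagonal argument, $u_{n,i} \to u_i$ and $|u_{n,i}| \to w_i$ pointwise a.e. This forces $w_i = |u_i|$ a.e., and in particular $\|u_i\|_{L^2}^2 = \alpha_i$, so $u \in M(\alpha)$. Combining the vanishing diamagnetic gaps with the strong $H^1$-convergence $|u_{n,i}| \to w_i$, I would chain
\[
\|\nabla u_i\|_{L^2}^2 \leq \liminf_n \|\nabla u_{n,i}\|_{L^2}^2 = \lim_n \|\nabla |u_{n,i}|\|_{L^2}^2 = \|\nabla w_i\|_{L^2}^2 = \|\nabla |u_i|\|_{L^2}^2 \leq \|\nabla u_i\|_{L^2}^2,
\]
forcing equality throughout. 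Hence $\|\nabla u_{n,i}\|_{L^2} \to \|\nabla u_i\|_{L^2}$, which together with $\|u_{n,i}\|_{L^2} \equiv \alpha_i^{1/2} = \|u_i\|_{L^2}$ and weak $H^1$-convergence upgrades to strong convergence of $u_n$ to $u$ in $H$.

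The argument under (V2) is identical up to one added observation: strong convergence $|u_{n,2}| \to w_2$ in $\tH_2$ yields $\int_{\RN} V_2 |u_{n,2}|^2 \rd x \to \int_{\RN} V_2 w_2^2 \rd x = \int_{\RN} V_2 |u_2|^2 \rd x$, supplying the extra weighted-norm convergence needed to turn weak $\tH_2$-convergence into strong. The only mildly delicate point anywhere in the argument is the a.e.\ identification $w = |u|$, but this is a routine subsequence extraction; once it is in place, saturation in Kato's inequality does the rest mechanically.
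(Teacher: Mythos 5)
Your proposal is correct, and the first assertion is handled exactly as in the paper: the pointwise inequality $\|\nabla |u_{n,i}|\|_{L^2}^2 \leq \|\nabla u_{n,i}\|_{L^2}^2$ (Lieb--Loss, Theorem 6.17) plus the fact that all other terms of $E$ and the constraints depend only on moduli. For the second assertion you take a genuinely different route. The paper, after identifying the strong $H$-limit of $(|u_n|)$ with $|u_0|$, extracts an $L^2$-dominating function, applies dominated convergence to get $u_n \to u_0$ strongly in $L^q(\RN)\times L^q(\RN)$ for all $2 \leq q < 2^\ast$, and then re-enters the energy: from $\ea \leq E(u_0) \leq \liminf_n E(u_n) = \ea$ and weak lower semicontinuity it extracts $\|\nabla u_{n,i}\|_{L^2} \to \|\nabla u_{0,i}\|_{L^2}$. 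You instead never touch the nonlinear terms again: you observe that the two minimizing sequences force each nonnegative gap $\|\nabla u_{n,i}\|_{L^2}^2 - \|\nabla |u_{n,i}|\|_{L^2}^2$ to vanish, and then close the chain $\|\nabla u_i\|_{L^2}^2 \leq \liminf_n \|\nabla u_{n,i}\|_{L^2}^2 = \|\nabla |u_i|\|_{L^2}^2 \leq \|\nabla u_i\|_{L^2}^2$ by saturation of the diamagnetic inequality. Your mechanism is slightly more self-contained (no dominated-convergence step, no weak lower semicontinuity of the full functional), while the paper's route has the side benefit of directly producing $u_n \to u_0$ in every subcritical $L^q$ and $E(u_0)=\ea$, facts it reuses later in the same spirit. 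Both arguments require the a.e.\ identification of the limit of $|u_{n,i}|$ with $|u_{0,i}|$, which you carry out the same way, and your treatment of the (V2) case correctly supplies the additional convergence of $\int_{\RN} V_2 |u_{n,2}|^2\,\rd x$ needed for strong convergence in $\tH_2$.
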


	\begin{proof}
For $\ea$, by \cite[Theorem 6.17]{LL-01}, we have 
$\| \nabla |u_{n,i}| \|_{L^2}^2 \leq \| \nabla u_{n,i} \|_{L^2}^2$. 
Since $|u_n| \in M(\alpha)$, one observes that 
$E(|u_n|) \leq E(u_n)$. 
Hence, $(|u_n|)$ is also a minimizing sequence.

	Next, assume that $(|u_n|)$ has a strongly convergent subsequence in $H$. 
Hereafter, we still write $(u_n)$ even after taking a subsequence. 
Thus, we suppose that $|u_n| \to (v_1,v_2)$ strongly in $H$. 
Since $(u_n)$ is bounded in $H$ by Lemma \ref{lemma:2.1}, we have 
$u_n \rightharpoonup u_0=(u_{0,1},u_{0,2})$ weakly in $H$ and 
$u_{n,i} \to u_{0,i}$ in $L^q_{\rm loc}(\RN)$ for $1 \leq q < 2^\ast = 2N/(N-2)$. 
Therefore, we infer that 
$v_i(x) = |u_{0,i}(x)|$.

	From $|u_{n,i}| \to |u_{0,i}|$ strongly in $L^2(\RN)$ with $i=1,2$, 
after taking a sequence, we may find $w_i \in L^2(\RN)$ such that 
$|u_{n,i}(x)| \leq w_i(x)$ and $|u_{n,i}(x)| \to |u_{0,i}(x)|$ for a.a. $x \in \RN$. 
The dominated convergence theorem yields 
$u_n \to u_0$ strongly in $L^2(\RN) \times L^2(\RN)$ and 
$u_n \to u_0$ strongly in $L^q(\RN) \times L^q(\RN)$ for every $ 2 \leq q < 2^\ast$. 
In particular, $u_0 \in M(\alpha)$.

	Finally, by the weak lower semicontinuity and the facts $u_0 \in M(\alpha)$ 
and $u_n \to u_0$ in $L^q(\RN) \times L^q(\RN)$ for $2 \leq q < 2^\ast$, 
we obtain 
	\[
		\ea \leq E(u_0) \leq \liminf_{n\to \infty} E(u_n) = \ea,
	\]
which implies $\| \nabla u_{n,i} \|_{L^2}^2 \to \| \nabla u_{0,i} \|_{L^2}^2$. 
Hence, by recalling $\nabla u_{n,i} \rightharpoonup \nabla u_{0,i}$ weakly in $L^2(\RN)$, 
one sees that $\nabla u_{n,i} \to \nabla u_{0,i}$ strongly in $L^2(\RN)$. 
Thus $u_n \to u_0$ strongly in $H$.

	On the other hand, for $\tea$, 
as in the above, we may prove $u_n \to u_0$ strongly in $L^q(\RN) \times L^q(\RN)$ 
for $2 \leq q < 2^\ast$. Thus, 
	\[
		\tea \leq E(u_0) \leq \liminf_{n \to \infty} E(u_n) = \tea 
	\]
and this yields $\| u_{n,2} \|_{V_2}^2 \to \| u_{0,2} \|_{V_2}^2$. 
Hence, $u_n \to u_0$ strongly in $\tH$. 
	\end{proof}

	Next, we show the existence of Palais--Smale sequence 
close to any given minimizing sequence.

	\begin{lemma}\label{lemma:2.3}
		\begin{enumerate}
			\item 
				Suppose \textup{(p1)} and \textup{(V1)}, and let 
				$(u_n) \subset M(\alpha)$ be a minimizing sequence. 
				Then there exist $(v_n) \subset M(\alpha)$ and $(\lambda_{n,i}) \subset \R$ such that 
				$(\lambda_{n,i})$ are bounded and 
					\[
						\| u_n - v_n \|_{H} \to 0, \quad 
						E'(v_n) + \lambda_{n,1} Q_1'(v_n) + \lambda_{n,2} Q'_2(v_n)  \to 0
						\quad {\rm strongly \ in} \ H^\ast
					\]
				where 
					\[
						\begin{aligned}
						E'(u)v &= \RE 
						\int_{\RN} \sum_{i=1}^2 \nabla u_i \cdot 
						\ov{\nabla v_i} + V_i(x) u_i \ov{v_i} \rd x 
						- \RE 
						\int_{\RN} \mu_1 |u_1|^{2p_1} u_1 \ov{v_1} 
						+ \mu_2 |u_2|^{2p_2} u_2 \ov{v_2} 
						 \rd x 
						\\
						& \quad - \RE 
						\int_{\RN} 
						 \beta |u_1|^{p_3-1} |u_2|^{p_3+1} u_1 \ov{v_1}
						+ \beta |u_1|^{p_3+1} |u_2|^{p_3-1} u_2 \ov{v_2} \rd x,
						\\
						Q_i(u) &:= \frac{1}{2}\int_{\RN} |u_i|^2 \rd x, \quad 
						Q_i'(u) v =  \RE  \int_{\RN} u_i \ov{v_i} \rd x.
					\end{aligned}
				\]
			\item Assume \textup{(p1)} and \textup{(V2)}. 
			Then for every minimizing sequence $(u_n) \subset \wt{M}(\alpha)$ for $\tea$, 
			there exists $(v_n) \subset \wt{M}(\alpha)$ and $( \lambda_{n,i} ) \subset \R$ such that 
			$(\lambda_{n,i})$ are bounded and 
				\[
					\| u_n - v_n \|_{H} + \| u_{n,1} - v_{n,1} \|_{V_1} \to 0, \quad 
					E'(v_n) + \lambda_{n,1} Q_1'(v_n) + \lambda_{n,2} Q_2'(v_n) \to 0 \quad 
					\textrm{strongly\ in} \ (\wt{H})^\ast.
				\]
		\end{enumerate}
	\end{lemma}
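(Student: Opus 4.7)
The plan is to apply Ekeland's variational principle on the $L^2$-constraint manifold, which is the standard route to producing an approximate Palais--Smale sequence close to a given minimizing sequence. I first check that $M(\alpha)$ is a $C^1$ submanifold of $H$ of codimension two when $\alpha_1,\alpha_2 > 0$: the map $(Q_1,Q_2) : H \to \R^2$ is $C^1$, and at every $u \in M(\alpha)$ its derivative is surjective because $Q_1'(u)(u_1,0)=\alpha_1$, $Q_2'(u)(0,u_2)=\alpha_2$ with vanishing cross terms. Conditions (p1) and (p2) together with Sobolev embedding ensure that $E$ is of class $C^1$ on $H$.

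Since $E$ is bounded below on $M(\alpha)$ by Lemma \ref{lemma:2.1}(i), Ekeland's principle applied to $E|_{M(\alpha)}$ produces, starting from $(u_n)$, a sequence $(v_n) \subset M(\alpha)$ with $\|u_n - v_n\|_H \to 0$, $E(v_n) \to \ea$, and
\[
	\sup\bigl\{\, |E'(v_n) h| \ : \ h \in T_{v_n}M(\alpha),\ \|h\|_H \leq 1 \,\bigr\} \to 0,
\]
where $T_{v_n}M(\alpha) = \{\, h \in H : Q_i'(v_n) h = 0, \ i=1,2 \,\}$. To convert this tangential bound into a statement in $H^\ast$ with explicit Lagrange multipliers, I define
\[
	\lambda_{n,1} := -\frac{E'(v_n)(v_{n,1},0)}{\alpha_1}, \qquad \lambda_{n,2} := -\frac{E'(v_n)(0,v_{n,2})}{\alpha_2},
\]
so that the residual $F_n := E'(v_n) + \lambda_{n,1} Q_1'(v_n) + \lambda_{n,2} Q_2'(v_n) \in H^\ast$ annihilates the two normal directions $(v_{n,1},0)$ and $(0,v_{n,2})$. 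Any $h \in H$ decomposes uniquely as $h = h^T + c_1(v_{n,1},0) + c_2(0,v_{n,2})$ with $h^T \in T_{v_n}M(\alpha)$, $c_i = Q_i'(v_n) h / \alpha_i$ and $|c_i| \leq \|h\|_H/\sqrt{\alpha_i}$; combined with the $H^1$-boundedness of $v_{n,i}$ this yields $\|h^T\|_H \leq C\|h\|_H$, hence $F_n h = E'(v_n) h^T = o(1) \|h\|_H$, so $F_n \to 0$ in $H^\ast$. Boundedness of $(\lambda_{n,i})$ is then immediate: $(v_n)$ is bounded in $H$ by Lemma \ref{lemma:2.1}(i), the potentials are bounded under (V1), and every nonlinear term in $E'(v_n)(v_{n,1},0)$ or $E'(v_n)(0,v_{n,2})$ is controlled by Gagliardo--Nirenberg under $p_1,p_2,p_3 < 2/N$.

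For part (ii), the same scheme runs on $\tH = H^1 \times \tH_2$ equipped with the inner product
\[
	\la u, w \ra_{\tH} := \la u_1, w_1 \ra_{V_1 + b} + \la u_2, w_2 \ra_{V_2}
\]
for a constant $b$ chosen so that $V_1 + b \geq 1$, which makes $\tH$ a real Hilbert space with norm equivalent to $\|u_1\|_{H^1} + \|u_2\|_{V_2}$. The constraint $\wt{M}(\alpha)$ is a closed $C^1$ codimension-two submanifold of $\tH$, and Ekeland's principle applied in the $\tH$-metric yields $\|u_n - v_n\|_{\tH} \to 0$, which absorbs both $\|u_n - v_n\|_H \to 0$ and $\|u_{n,1} - v_{n,1}\|_{V_1} \to 0$ (the latter in the quadratic-form sense, controlled by the $H^1$-difference since $V_1$ is bounded). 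The projection formulas for $\lambda_{n,i}$, their boundedness, and the convergence $F_n \to 0$ in $(\tH)^\ast$ go through unchanged, with boundedness now using the $\tH$-bound on $(v_n)$ from Lemma \ref{lemma:2.1}(i).

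The argument is essentially a careful application of Ekeland's principle together with the Lagrange multiplier theorem, so there is no sharp conceptual obstacle. The main care lies in the tangent-normal bookkeeping that converts Ekeland's intrinsic tangential estimate into a statement on the full dual, and, for part (ii), in choosing an equivalent inner product on $\tH$ so that the manifold structure and Ekeland's principle apply cleanly.
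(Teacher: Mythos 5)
Your proposal is correct and follows essentially the same route as the paper: Ekeland's variational principle on the codimension-two constraint manifold, followed by an explicit choice of Lagrange multipliers that makes the residual vanish on a two-dimensional complement of the tangent space, with boundedness of the multipliers coming from the $H$-bound on $(v_n)$ and Gagliardo--Nirenberg. The only cosmetic difference is that you test against $(v_{n,1},0)$ and $(0,v_{n,2})$ (using $\alpha_i>0$) whereas the paper uses the Riesz representatives $\nabla Q_i(v_n)$; both choices yield the same conclusion.
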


	\begin{proof}
Since a proof for $\tea$ is similar to the one for $\ea$, we only show (i). 
Note that $M(\alpha)$ is a closed set in $H$. Applying Ekeland's variational principle 
for $E$ and $(u_n)$ on $M(\alpha)$ (\cite[Theorem 4.1 and Remark 4.1]{MW-89}), 
we may find $(v_n) \subset M(\alpha)$ such that 
for $\e_n := E(u_n) - \ea \geq 0$, 
	\begin{equation}\label{eq:2.3}
		E(v_n) \leq E(u_n), \quad \| u_n - v_n \|_{H} \leq \sqrt{\e_n}, \quad 
		E(v_n) \leq E(w) + \sqrt{\e_n} \| v_n - w\|_{H} \quad 
		{\rm for\ all} \ w \in M(\alpha).
	\end{equation}
Thus, $(v_n)$ is also a minimizing sequence.

	Since 
	\begin{equation}\label{eq:2.4}
		M(\alpha) = \Set{ u \in H | Q_1(u) = \frac{\alpha_1}{2}, \ 
		Q_2(u) =  \frac{\alpha_2}{2} }, \quad 
		Q_1'(u) [(u_1,0)] = \alpha_1, \quad 
		Q_2'(u) [(0,u_2)] = \alpha_2,
	\end{equation}
we see that $Q_1'(u)$ and $Q_2'(u)$ are linearly independent for each $u \in M(\alpha)$, 
and that $M(\alpha)$ is a Hilbert manifold with codimension $2$. 
Its tangent space at $ u \in M(\alpha)$ is given by 
	\begin{equation}\label{eq:2.5}
		T_u M(\alpha) = \left\{ v \in H \ |\ Q_1'(u) v = 0 = Q_2'(u) v  \right\} 
		(= {\rm span} \{ \nabla Q_1(u), \nabla Q_2(u) \}^\perp  )
	\end{equation}
where $\nabla Q_i(u) \in H$ satisfies 
$\la \nabla Q_i(u) , v \ra_{H} = Q_i'(u) v$ for all $v \in H$. 
By \eqref{eq:2.3}, one gets 
	\[
		\left\|  E'(v_n)  \right\|_{(T_{v_n}M(\alpha) )^\ast} 
		= \sup_{ w \in T_{v_n}M(\alpha), \  \| w \|_{H} \leq 1 } 
		E'(v_n) w
		\to 0.
	\]
From \eqref{eq:2.5}, for any $\lambda_1,\lambda_2 \in \R$, we have 
	\[
		\left(E'(v_n) + \lambda_1 Q_1'(v_n) + \lambda_2 Q_2'(v_n) \right) w 
		= E'(v_n) w \quad {\rm for\ all} \ w \in T_{v_n}M(\alpha).
	\]
In addition, since $(v_n)$ is bounded in $H$ due to Lemma \ref{lemma:2.1}, 
\eqref{eq:2.4} implies that there exists $0<c_1<c_2<\infty $ such that 
$c_1 \leq \| Q_i'(v_n) \|_{H^\ast} \leq c_2$ for every $n \geq 1$. 
Thus, setting 
$\lambda_{n,i} := - E'(v_n) \nabla Q_i(v_n) / \| Q_i'(v_n) \|_{H^\ast}^2$, 
we observe that $(\lambda_{n,i})$ are bounded and 
	\[
		E'(v_n) + \lambda_{n,1} Q_1'(v_n) + \lambda_{n,2} Q_2'(v_n) 
		\to 0 \quad {\rm strongly \ in} \ H^\ast.
	\]
Thus, Lemma \ref{lemma:2.3} holds. 
	\end{proof}

\section{Proof of Theorem \ref{theorem:1.1}}
\label{section:3}

	In this section, we aim to prove Theorem \ref{theorem:1.1}. 
Before proceeding to the proof of Theorem \ref{theorem:1.1}, 
we prepare several useful lemmas. 
Throughout this section, we always assume that $V_1$ and $V_2$ satisfy (V1) and (p1) holds

\subsection{Decay estimates}
\label{section:3.1}

In this subsection, we observe the decay rate of nonnegative solutions to 
\begin{equation}\label{eq:3.1}
\left\{\begin{aligned}
-\Delta u_1 + \left( V_1(x) + \lambda_{0,1} \right) u_1 
&= \mu_1 u_1^{2p_1 + 1} + \beta u_1^{p_3} u_2^{p_3+1} 
& &{\rm in} \ \RN,\\
-\Delta u_2 + \left( V_2 (x) + \lambda_{0,2} \right) u_2 
&= \beta u_1^{p_3+1} u_2^{p_3} + \mu_2 u_2^{2p_2+1}
& &{\rm in} \ \RN
\end{aligned}\right.
\end{equation}
where $\lambda_{0,1}, \lambda_{0,2}$ are constants satisfying 
\begin{equation}\label{eq:3.2}
0< \lambda_{0,1} \leq \lambda_{0,2}. 
\end{equation}

The aim of this subsection is to prove the following decay estimates:

	\begin{lemma}\label{lemma:3.1}
		Let $u=(u_1,u_2)$ be a solution of \eqref{eq:3.1} with $u_1(x),u_2(x) \geq 0$ in $\RN$. 
		\begin{enumerate}
			\item 
				If $u_1 \not\equiv 0$, then for every $\lambda \in (0,\lambda_{0,1})$, 
				there exist $c_0>0$, which is independent of $\lambda$, and $C_\lambda>0$ such that 
				\[
					c_0 \left( 1 + |x| \right)^{-(N-1)/2} \exp \left( - \sqrt{\lambda_{0,1}} |x| \right)
					\leq u_1 (x) \leq C_\lambda \exp \left( - \sqrt{\lambda} |x| \right)
					\quad \text{for all} \ x \in \RN.
				\]
	
		\item
			If $u_1 \equiv 0$ and $u_2 \not \equiv 0$, then 
			for every $\lambda \in (0,\lambda_{0,2})$, 
			there exist $c_0>0$, which is independent of $\lambda$, 
			and $C_\lambda >0$ such that 
			\[
				c_0 \left( 1 + |x| \right)^{-(N-1)/2} \exp \left( - \sqrt{\lambda_{0,2}} |x| \right)
				\leq u_2 (x) \leq C_\lambda \exp \left( - \sqrt{\lambda} |x| \right)  
				\quad \text{for all } x \in \RN.
			\]
	
		\item
			Suppose that $u_1,u_2 \not \equiv 0$. 
			When either $p_3 \geq 1$ or else $0 < p_3 < 1$ and 
			$\lambda_{0,3} :=  (1+p_3)^2 \lambda_{0,1} / (1-p_3)^2 > \lambda_{0,2}$, 
			for every $\lambda \in (0,\lambda_{0,2})$, 
			there exist $c_0>0$, which is independent of $\lambda$, 
			and $C_\lambda >0$ such that 
			\[
				c_0 \left( 1 + |x| \right)^{-(N-1)/2} 
				\exp \left( - \sqrt{\lambda_{0,2}} |x| \right) 
				\leq u_{2}(x) 
				\leq C_\lambda \exp \left( - \sqrt{\lambda} |x| \right)
				\quad \text{for all } x \in \RN.
			\]
	
			On the other hand, 
			when $0 < p_3 < 1$ and $\lambda_{0,3} \leq \lambda_{0,2}$, 
			for each $\lambda_1 \in ( 0,  \lambda_{0,3} ) $ and 
			$ \lambda_2 \in ( \lambda_{0,3} , \infty ) $, there exist $C_{\lambda_1}, C_{\lambda_2} > 0$ 
			such that 
			\[
				C_{\lambda_2} \exp \left( - \sqrt{\lambda_2} |x| \right) 
				\leq u_2 (x) \leq 
				C_{\lambda_1} \exp \left( - \sqrt{\lambda_1} |x| \right)
				\quad \text{for all } x \in \RN. 
			\]
		\end{enumerate}
	\end{lemma}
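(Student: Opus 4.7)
The plan is to reduce all decay estimates to comparisons with the radial barriers $\Phi_\mu(x) = e^{-\sqrt{\mu}|x|}$ and the Bessel--Green kernel $G_\mu$ of $-\Delta + \mu$ on $\RN$, for which $G_\mu(x) \sim c_N |x|^{-(N-1)/2} e^{-\sqrt{\mu}|x|}$ at infinity. A standard elliptic bootstrap applied to \eqref{eq:3.1} together with the $H^1$-bounds inherited from Lemmas \ref{lemma:2.1}--\ref{lemma:2.3} shows that $u_1, u_2 \in C^2(\RN) \cap L^\infty(\RN)$ with $u_i(x) \to 0$ as $|x| \to \infty$; the strong maximum principle then gives $u_i > 0$ on $\RN$ whenever $u_i \not\equiv 0$.

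For the upper bounds in (i) and the rough bound in (iii), I would work with $w := u_1 + u_2$. Adding the two equations of \eqref{eq:3.1}, using $\lambda_{0,1} \leq \lambda_{0,2}$, $V_i \leq 0$, the pointwise estimates $u_i^{2p_i+1} \leq w^{2p_i+1}$ and $u_1^{p_3}u_2^{p_3+1} + u_1^{p_3+1}u_2^{p_3} \leq 2 w^{2p_3+1}$ (the latter from $u_1 u_2 \leq w^2/4$ and $u_i \leq w$), one obtains
\[
-\Delta w + \bigl[\lambda_{0,1} - |V_1| - |V_2| - \mu_1 w^{2p_1} - \mu_2 w^{2p_2} - 2\beta w^{2p_3}\bigr]\,w \leq 0.
\]
The bracketed coefficient tends to $\lambda_{0,1}$ at infinity, so for every $\lambda < \lambda_{0,1}$ one has $-\Delta w + \lambda w \leq 0$ outside a large ball, and comparison with an appropriate multiple of $\Phi_\lambda$ yields $w \leq C_\lambda e^{-\sqrt{\lambda}|x|}$, proving the bound in (i) and the rough bound for $u_2$ in (iii). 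The upper bound in (ii) is the special case $u_1 \equiv 0$: the same barrier argument applied directly to the decoupled $u_2$-equation, whose effective coefficient $V_2 + \lambda_{0,2} - \mu_2 u_2^{2p_2} \to \lambda_{0,2}$, gives the rate $\sqrt{\lambda_{0,2}}-\e$. The refined upper bound on $u_2$ in (iii) comes from a bootstrap: assuming $u_1 \leq C e^{-\sqrt{\lambda}|x|}$ ($\lambda$ close to $\lambda_{0,1}$) and a current bound $u_2 \leq C e^{-\mu_n |x|}$, the coupling $\beta u_1^{p_3+1}u_2^{p_3}$ is bounded by $C e^{-\kappa_n|x|}$ with $\kappa_n = (p_3+1)\sqrt{\lambda_{0,1}} + p_3 \mu_n - \e$; comparing $u_2$ with a supersolution of $-\Delta \Psi + \lambda_{0,2} \Psi = C e^{-\kappa_n|x|}$ improves the rate to $\mu_{n+1} = \min\bigl(\kappa_n,\, \sqrt{\lambda_{0,2}} - \e\bigr)$. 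The update $\mu \mapsto (p_3+1)\sqrt{\lambda_{0,1}} + p_3 \mu$ has fixed point $\sqrt{\lambda_{0,3}}$ and is a contraction when $0 < p_3 < 1$; hence when $p_3 \geq 1$ or $\lambda_{0,3} > \lambda_{0,2}$ the iteration reaches the cap $\sqrt{\lambda_{0,2}} - \e$ in finitely many steps (fast decay), while when $0 < p_3 < 1$ and $\lambda_{0,3} \leq \lambda_{0,2}$ the iterates converge monotonically to $\sqrt{\lambda_{0,3}}$, giving the slow-decay upper bound.

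For the lower bounds in (i), (ii) and the fast case of (iii), I would use the representation $u_i = G_{\lambda_{0,i}} * f_i$ with $f_i := -V_i u_i + (\text{nonnegative nonlinear terms}) \geq 0$ and $f_i \not\equiv 0$: lower-bounding $f_i$ on a small ball around a point $x_0$ where it is strictly positive yields $u_i(x) \geq c_0 G_{\lambda_{0,i}}(x-x_0) \geq c_0 (1+|x|)^{-(N-1)/2} e^{-\sqrt{\lambda_{0,i}}|x|}$. For the slow-decay lower bound in (iii), I would iterate in the opposite direction: starting from $u_2 \geq c|x|^{-(N-1)/2} e^{-\sqrt{\lambda_{0,2}}|x|}$ and $u_1 \geq c(1+|x|)^{-(N-1)/2} e^{-\sqrt{\lambda_{0,1}}|x|}$, each step lower-bounds $\beta u_1^{p_3+1}u_2^{p_3}$ and reconvolves with $G_{\lambda_{0,2}}$ to produce $u_2 \geq c' e^{-\mu_{n+1}|x|}$ with $\mu_{n+1} = (p_3+1)\sqrt{\lambda_{0,1}} + p_3 \mu_n$. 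Since $\mu_n - \sqrt{\lambda_{0,3}} = p_3(\mu_{n-1} - \sqrt{\lambda_{0,3}})$, the sequence decreases monotonically from $\sqrt{\lambda_{0,2}}$ to $\sqrt{\lambda_{0,3}}$, yielding $u_2 \geq C_{\lambda_2} e^{-\sqrt{\lambda_2}|x|}$ for every $\lambda_2 > \lambda_{0,3}$. The main obstacle throughout is the coupling $\beta u_1^{p_3}u_2^{p_3+1}$ (resp.\ $\beta u_1^{p_3+1}u_2^{p_3}$) when $0 < p_3 < 1$: being sublinear in $u_1$, it cannot be absorbed as an $o(1)$ perturbation of $\lambda_{0,i}$ using only $u_i \to 0$, and it is precisely this feature that forces the dichotomy in (iii); the contractive iteration $\mu \mapsto (p_3+1)\sqrt{\lambda_{0,1}} + p_3 \mu$ is the quantitative device that tracks the shift between the two regimes in both the upper and lower bounds.
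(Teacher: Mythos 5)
Your proposal follows essentially the same route as the paper: comparison of $u_1+u_2$ (resp.\ the decoupled $u_2$-equation) with exponential barriers for the upper bounds, Green-function comparison for the $(1+|x|)^{-(N-1)/2}e^{-\sqrt{\lambda_{0,i}}|x|}$ lower bounds, and the iteration $\mu \mapsto (p_3+1)\sqrt{\lambda_{0,1}}+p_3\mu$ with fixed point $\sqrt{\lambda_{0,3}}$ (the paper's sequences $\xi_{n,\e}$ and $\zeta_n$, fed through the Bahri--Li convolution asymptotics of Lemma \ref{lemma:3.3}) for the refined bounds in (iii). The only imprecision is that your upper-bound update should read $\mu_{n+1}=\min\bigl((2p_2+1)\mu_n,\ \kappa_n,\ \sqrt{\lambda_{0,2}}-\e\bigr)$ so as to also track the self-interaction source term $\mu_2 u_2^{2p_2+1}$, as in \eqref{eq:3.15}; since that extra branch is likewise expansive it does not change the limit of the iteration or the conclusion.
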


\begin{remark}\label{remark:3.2}
	Putting $V_1(x) \equiv 0 \equiv V_2(x)$ in \eqref{eq:3.1}, 
	we observe that 
	the same statements in Lemma \ref{lemma:3.1} hold
	for solutions $w=(w_1,w_2)$ of \eqref{eq:3.3}:
	\begin{equation}\label{eq:3.3}
	\left\{\begin{aligned}
	-\Delta w_1 +  \lambda_{0,1} w_1 
	&= \mu_1 w_1^{2p_1 + 1} + \beta w_1^{p_3} w_2^{p_3+1} 
	& &{\rm in} \ \RN,\\
	-\Delta w_2 + \lambda_{0,2} w_2 
	&= \beta w_1^{p_3+1} w_2^{p_3} + \mu_2 w_2^{2p_2+1}
	& &{\rm in} \ \RN.
	\end{aligned}\right.
	\end{equation}
\end{remark}

	To show Lemma \ref{lemma:3.1}, 
we prepare one lemma which is a variant of 
\cite[Proposition 1.2]{BL-90} and \cite[Lemma II.2]{BL-97} (cf. \cite{GNN-81}) 
and also useful for the interaction estimate below: 
\begin{lemma}\label{lemma:3.3}
	Let $0 \leq f(x), g(x)$ satisfy $f, g \in C( \RN \setminus \{0\} ) \cap L^1(\RN)$ 
	and 
	\[
	\lim_{|x| \to \infty} 
	\left( 1 + |x| \right)^\alpha e^{\beta |x|} g(x) = \gamma \in [0,\infty), \quad 
	f(x) \leq C \exp \left( - (\beta + \eta) |x| \right) 
	\quad {\rm for\ each} \ |x| \geq 1
	\]
	for some $\alpha \geq 0$, $\beta \geq 0$ and $\eta > 0$. Then 
	\[
	\lim_{r \to \infty} (1+r)^\alpha e^{\beta r}
	\int_{\RN} g(r \omega - y) f(y) \rd y  
	= 
	\gamma \int_{\RN} f(y) e^{\beta \omega \cdot y} \rd y 
	\quad \text{uniformly with respect to} \ \omega \in \SN.
	\]
\end{lemma}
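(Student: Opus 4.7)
The plan is to exploit the fact that the factor $(1+r)^\alpha e^{\beta r} g(r\omega - y)$ has a clean asymptotic as $r\to\infty$ with $y$ bounded, while the extra exponential decay $e^{-\eta|y|}$ inherited from $f$ renders the tail in $y$ negligible uniformly in $\omega$. Starting from $|r\omega - y| = r\sqrt{1 - 2\omega\cdot y/r + |y|^2/r^2} = r - \omega\cdot y + O(|y|^2/r)$ as $r\to\infty$, I would observe that
\[
\frac{(1+r)^\alpha e^{\beta r}}{(1+|r\omega - y|)^\alpha e^{\beta|r\omega - y|}} \longrightarrow e^{\beta \omega \cdot y}
\]
uniformly for $(y,\omega)$ in any set $\{|y|\le R\}\times\SN$. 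Combined with the hypothesis $(1+|z|)^\alpha e^{\beta|z|}g(z)\to\gamma$ (applied at $z=r\omega-y$, where $|z|\to\infty$ uniformly in such $(y,\omega)$), this gives the pointwise limit
\[
(1+r)^\alpha e^{\beta r} g(r\omega - y) \longrightarrow \gamma\, e^{\beta \omega \cdot y} \qquad (r\to\infty)
\]
uniformly for $|y| \leq R$, $\omega \in \SN$.

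Next I would produce an integrable dominant valid on $\{|r\omega - y|\ge 1\}$. Continuity of $g$ away from the origin together with the limit hypothesis yields $g(z) \leq C(1+|z|)^{-\alpha} e^{-\beta|z|}$ for $|z|\ge 1$. Using the elementary inequalities $1 + r \leq (1 + |y|)(1 + |r\omega - y|)$ and $r - |r\omega - y| \leq |y|$, one gets
\[
(1+r)^\alpha e^{\beta r} g(r\omega - y) \leq C(1+|y|)^\alpha e^{\beta |y|} \qquad \text{on } \{|r\omega - y|\ge 1\},
\]
and the decay bound $f(y) \leq C e^{-(\beta+\eta)|y|}$ for $|y|\ge 1$ then yields the $r,\omega$-independent integrable majorant $C(1+|y|)^\alpha e^{-\eta|y|}$ for the product.

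The exceptional region $\{|r\omega - y| < 1\}$ is handled separately: on this set $|y| \geq r-1$, so $f(y) \leq Ce^{-(\beta+\eta)(r-1)}$, and since $g\in L^1(\RN)$,
\[
(1+r)^\alpha e^{\beta r} \int_{|r\omega - y|<1} g(r\omega - y)\, f(y)\, \rd y \leq C(1+r)^\alpha e^{-\eta r}\|g\|_{L^1} \longrightarrow 0
\]
uniformly in $\omega$. To conclude, I would split the remaining integral at $|y|=R$: the part $|y|\le R$ converges to $\gamma\int_{|y|\le R} e^{\beta\omega\cdot y} f(y)\,\rd y$ uniformly in $\omega$ by the uniform pointwise convergence above, while the tail $|y|>R$ is bounded (again uniformly in $r,\omega$) by $\int_{|y|>R} C(1+|y|)^\alpha e^{-\eta|y|}\,\rd y$, which is arbitrarily small for $R$ large. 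Letting $r\to\infty$ and then $R\to\infty$ gives the claim. The main delicate point is ensuring everything is uniform in $\omega\in\SN$; this reduces to the uniformity of the Taylor expansion of $|r\omega - y|$ in $\omega$ and of the assumed radial-type limit for $g$, both of which are straightforward but must be tracked carefully.
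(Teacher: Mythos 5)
Your proof is correct, and it reaches the conclusion by a somewhat different route than the paper. Both arguments rest on the same three ingredients: the elementary bounds $r-|r\omega-y|\leq |y|$ and $1+r\leq (1+|y|)(1+|r\omega-y|)$, the expansion $r-|r\omega-y|=\omega\cdot y+O(|y|^2/r)$, and the fact that the region where the argument of $g$ stays bounded contributes nothing in the limit (your cutoff $\{|r\omega-y|<1\}$ plays the role of the paper's $J_1$ over $\{|r\omega-y|\leq R_\e\}$). The difference is in how the main term is organized: the paper carries an $\e$ from the definition of $\lim_{|z|\to\infty}(1+|z|)^\alpha e^{\beta|z|}g(z)=\gamma$ through the whole estimate and performs a second, $r$-dependent split of the $y$-integral at $|y|\leq \delta_\e\sqrt{r}$ to control the Taylor remainder, whereas you split at a fixed radius $|y|\leq R$, prove uniform convergence of $(1+r)^\alpha e^{\beta r}g(r\omega-y)$ to $\gamma e^{\beta\omega\cdot y}$ on that compact set, and dispose of the tail $|y|>R$ with the explicit $(r,\omega)$-independent majorant $C(1+|y|)^\alpha e^{-\eta|y|}$ before letting $R\to\infty$. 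Your dominated-convergence-style organization is arguably cleaner and makes the uniformity in $\omega$ more transparent; the paper's version avoids having to state a separate uniform-convergence-on-compacts lemma. Two small points you should make explicit when writing this up: the pointwise bound $g(z)\leq C(1+|z|)^{-\alpha}e^{-\beta|z|}$ for $|z|\geq 1$ needs both the limit hypothesis (for $|z|$ large) and continuity of $g$ on the compact annulus $1\leq|z|\leq R_0$ (it need not hold near $z=0$, which is exactly why the set $\{|r\omega-y|<1\}$ must be excised and handled via $g\in L^1$); and on the ball $|y|\leq R$ your convergence argument should invoke $f\in L^1(B_R)$ rather than any pointwise majorant, since $f$ is not assumed bounded near the origin.
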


\begin{proof}
	From the assumptions, for every $\e>0$, choose an $R_\e>0$ so that 
	\begin{equation}\label{eq:3.4}
	|x| \geq R_\e \quad \Rightarrow \quad 
	\left|(1+|x|)^\alpha e^{\beta |x|} g(x) - \gamma \right| \leq \e. 
	\end{equation}
	We divide the integral into two parts:
	\[
	\begin{aligned}
	(1+r)^\alpha e^{\beta r} \int_{\RN} g(r \omega - y ) f(y) \rd y 
	&= \left( \int_{|r\omega - y| \leq R_\e} + \int_{|r \omega - y| > R_\e } \right) 
	(1+r)^\alpha e^{\beta r} g(r \omega - y) f(y) \rd y 
	\\
	& =: J_1(r,\omega) + J_2(r,\omega).
	\end{aligned}
	\]

	We first show that 
	\begin{equation}\label{eq:3.5}
	\lim_{r \to \infty}\sup_{\omega \in \SN} | J_1(r,\omega) | = 0. 
	\end{equation}
	It follows from the assumptions that for $r \geq R_{\e} + 1$, 
	\[
	\begin{aligned}
	|J_1(r,\omega)| 
	&\leq \int_{|z| \leq R_\e} g(z) f(r\omega - z) e^{\beta r} (1+r)^\alpha \rd z 
	\\
	&\leq 
	\int_{|z| \leq R_\e} g(z) C \exp \left( - (\beta + \eta) |r\omega - z| \right) 
	e^{\beta r} (1+r)^\alpha \rd z 
	\\
	&= C \int_{|z| \leq R_\e} g(z) \exp \left( \beta \{r - |r\omega - z|\} \right) 
	(1+r)^\alpha e^{-\eta |r\omega - z|}
	\rd z .
	\end{aligned}
	\]
	Noting $r - |r \omega - z| \leq |z|$, $g \in L^1(B_{R_\e})$ and 
	\[
	\lim_{r \to \infty}\sup_{\omega \in \SN, \, |z| \leq R_\e}  
	(1+r)^\alpha e^{-\eta |r\omega - z|} = 0,
	\]
	we observe that \eqref{eq:3.5} holds.

	Next, we consider $J_2(r,\omega)$. 
	Since 
	\[
	J_2(r,\omega) 
	= 
	\int_{|r\omega - y | > R_\e} 
	g(r \omega - y) \left( 1+ |r \omega - y | \right)^\alpha 
	e^{\beta |r \omega - y|} 
	f(y) e^{\beta ( r - |r\omega - y| )} 
	\left( \frac{1+r}{1+|r\omega -y |} \right)^\alpha \rd y
	\]
	and $r - |r \omega - y| \leq |y|$, we see from \eqref{eq:3.4} and the assumption on $f$ that 
	\begin{equation}\label{eq:3.6}
	\begin{aligned}
	&\sup_{\omega \in \SN} 
	\left| J_2(r,\omega) - \gamma \int_{|r\omega - y| > R_\e} 
	f(y) e^{\beta ( r - |r\omega - y| )} 
	\left( \frac{1+r}{1+|r\omega -y |} \right)^\alpha \rd y  \right| 
	\\
	\leq \, & 
	\e \sup_{\omega \in \SN}
	\int_{|r \omega - y | >R_\e}
	f(y) e^{\beta ( r - |r\omega - y| )} 
	\left( \frac{1+r}{1+|r\omega -y |} \right)^\alpha \rd y 
	\\
	\leq \, &
	\e \sup_{\omega \in \SN} \int_{\RN} f(y) e^{\beta |y|} 
	\left( \frac{1+|r\omega - y | + |y|}{1 + |r\omega - y |} \right)^\alpha
	\rd y 
	\\
	\leq & \, \e \int_{\RN} f(y) e^{\beta |y| } \left( 1 + |y| \right)^\alpha \rd y
	= 
	C(f) \e. 
	\end{aligned}
	\end{equation}

	Next, by 
	\[
	\begin{aligned}
	\beta (r - |r\omega -y|) 
	= \beta r \left[ 1 - \left( 1 - 2 \frac{\omega\cdot y}{r} 
	+ \frac{|y|^2}{r^2}  \right)^{1/2} \right] 
	&= \beta r \left[ \frac{\omega \cdot y}{r} + O \left( \frac{|y|^2}{r^2} \right) \right]
	\\
	&= \beta \omega \cdot y + O \left( \frac{|y|^2}{r} \right), 
	\end{aligned}
	\]
	we can choose a $\delta_\e>0$ so that 
	\[
	|y| \leq \delta_\e \sqrt{r} \quad 
	\Rightarrow \quad 
	\sup_{\omega \in \SN} 
	\left|e^{\beta (r - |r\omega -y| ) } - e^{\beta \omega \cdot y} \right| 
	< \e \sup_{\omega \in \SN} e^{\beta \omega \cdot y} 
	= \e e^{\beta |y|} .
	\]
	Set 
	\[
	[|r\omega - y| > R_\e] \cap [|y| \leq \delta_\e \sqrt{r}] := I_1(r,\omega), \quad 
	[|r\omega - y| > R_\e] \cap [ |y| > \delta_\e \sqrt{r} ] := I_2(r,\omega). 
	\]
	Noting 
	\[
		\begin{aligned}
			&\lim_{r \to \infty} \sup_{\omega \in \SN, \ y \in I_1(r,\omega)} 
			\left( \frac{1+r}{1+|r\omega -y|} \right)^\alpha = 1, 
			\\
			&\lim_{r \to \infty} \sup_{\omega \in \SN} 
			\int_{[|y| > \delta_\e \sqrt{r}] \, \cup \, [ |r \omega - y | \leq R_\e ] } 
			f(y) 
			e^{\beta |y|} (1+|y|)^\alpha \rd y = 0,
		\end{aligned}
	\]
	we obtain 
	\begin{equation}\label{eq:3.7}
	\begin{aligned}
	& \limsup_{r \to \infty} \sup_{\omega \in \SN} 
	\left| \int_{I_1(r,\omega)} f(y) e^{\beta ( r - |r\omega - y| )}
	\left( \frac{1+r}{1+|r\omega-y|} \right)^\alpha \rd y
	- \int_{\RN} f(y) e^{\beta \omega \cdot y} \rd y  \right|
	\\
	= & \, 
	\limsup_{r \to \infty} \sup_{\omega \in \SN} 
	\left| \int_{I_1(r,\omega)} f(y) e^{\beta ( r - |r\omega - y| )} \rd y
	- \int_{I_1(r,\omega)} f(y) e^{\beta \omega \cdot y}  \rd y  \right| 
	\\
	\leq & \,
	\e \int_{\RN} f(y) e^{\beta |y|} \rd y = C(f) \e.
	\end{aligned}
	\end{equation}

	On the other hand, from $r - |r \omega - y | \leq |y|$, 
	it follows that 
	\begin{equation}\label{eq:3.8}
	\begin{aligned}
	&\limsup_{r \to \infty} \sup_{\omega \in \SN} 
	\int_{I_2(r,\omega)} f(y) e^{\beta (r - |r\omega -y|) } 
	\left( \frac{1+r}{1+ |r \omega - y|} \right)^\alpha \rd y 
	\\
	\leq \, & 
	\limsup_{r \to \infty} \sup_{\omega \in \SN} 
	\int_{I_2(r,\omega)} f(y) e^{\beta |y|} 
	\left( \frac{1+ |r\omega -y | + |y|}{1+|r \omega - y |} \right)^\alpha 
	\rd y 
	\\
	\leq \, & 
	\limsup_{r \to \infty} \sup_{\omega \in \SN} 
	\int_{I_2(r,\omega)} f(y) e^{\beta |y|} 
	\left( 1 + |y| \right)^\alpha 
	\rd y 
	= 0.
	\end{aligned}
	\end{equation}

	Now from \eqref{eq:3.5} through \eqref{eq:3.8}, we obtain 
	\[
	\limsup_{r \to \infty} 
	\sup_{\omega \in \SN} 
	\left| (1+r)^\alpha e^{\beta r} \int_{\RN} g(r \omega - y ) f(y) \rd y 
	- \gamma \int_{\RN} f(y) e^{\beta \omega \cdot y} \rd y \right| 
	\leq C(f) \e.
	\]
	Since $\e>0$ is arbitrary, Lemma \ref{lemma:3.3} holds. 
\end{proof}

Now we prove Lemma \ref{lemma:3.1}.

\begin{proof}[Proof of Lemma \ref{lemma:3.1}]
	We first remark that by the strong maximum principle, 
	if $u_j(x) \not \equiv 0$, then $u_j(x) > 0$ holds for each $x \in \RN$. 
	We divide our proof into several steps.

	\medskip
	
	\noindent
	\textbf{Step 1:} \textsl{Proof of (ii).}
	
	\medskip

	To this end, it suffices to observe a positive solution of 
	\[
	-\Delta u + (\lambda + V(x)) u = \mu u^{2p+1} \quad {\rm in} \ \RN, 
	\quad u \in H^1(\RN)
	\]
	where $\lambda,\mu > 0$, $V(x) \leq 0$ and $V(x) \to 0$ as $|x| \to \infty$. 
	We shall prove that for every $\nu \in (0,\lambda)$, 
	there exist $c_0>0$, which is independent of $\nu$, and 
	$C_\nu > 0$ such that 
	\begin{equation}\label{eq:3.9}
		c_0 \left( 1 + |x| \right)^{-(N-1)/2} \exp \left( - \sqrt{\lambda} |x| \right) 
		\leq u (x) \leq 
		C_\nu \exp \left( - \sqrt{\nu} |x| \right) 
		\quad \text{for all } x \in \RN. 
	\end{equation}
	
	Let $0 <\nu  < \lambda$. 
	Since $V(x), u(x) \to 0 $ as $|x| \to \infty$, 
	we can choose an $R_\nu>0$ such that 
	\[
	|x| \geq R_\nu \quad \Rightarrow \quad 
	u(x) \leq 1, \quad 
	\lambda + V(x) - \mu u^{2p}(x) \geq \frac{\lambda + \nu}{2}.
	\]
	Therefore, $u$ satisfies 
	\[
	-\Delta u + \frac{\lambda + \nu}{2} u \leq 0 \quad 
	{\rm in} \ [|x| > R_\nu]. 
	\]

	On the other hand, set 
	\[
	\psi_\nu(r) := \exp \left( - \sqrt{\nu} (r - R_\nu) \right).
	\]
	Noting 
	\[
	|\psi_\nu'(r) | = \sqrt{\nu} \psi_\nu(r), \quad 
	\psi_\nu''(r) = \nu \psi_\nu (r),
	\]
	we obtain 
	\[
	\begin{aligned}
	-\Delta ( \psi_\nu(|x|) ) + \frac{\lambda+\nu}{2} \psi_{\nu} (|x|) 
	&= - \psi_{\nu}''(|x|) - \frac{N-1}{|x|} \psi_{\nu}'(|x|) 
	+ \frac{\lambda + \nu}{2} \psi_{\nu} (|x|) 
	\\
	& \geq - \frac{N-1}{|x|} \sqrt{\nu} \psi_\nu(|x|) + \frac{\lambda - \nu}{2} 
	\psi_\nu (|x|).
	\end{aligned}
	\]
	Thus, enlarging $R_\nu$ if necessary, we may assume that 
	\[
	\frac{N-1}{R_\nu} \sqrt{\nu} \leq \frac{\lambda - \nu}{2}.
	\]
	From this, it follows that 
	\[
	-\Delta u + \frac{\lambda + \nu}{2} u \leq 0 \leq 
	- \Delta \psi_\nu(|x|) + \frac{\lambda + \nu}{2} \psi_{\nu} \quad 
	{\rm in} \ [|x| > R_\nu].
	\]
	Noting $u(x) \leq 1 = \psi_{\nu } (|x|)$ on $\partial B_{R_\nu}(0)$ and 
	$\psi_\nu(x), u(x) \to 0$ as $|x| \to \infty$, we find that 
	$\psi_\nu - u$ does not take a negative minimum in $[|x| > R_\nu]$. 
	Hence, $u(x) \leq \psi_{\nu} (x)$ in $[|x| > R_\nu]$ and this implies 
	\[
	u(x) \leq C_\nu \exp \left( - \sqrt{\nu} |x| \right) 
	\quad \text{for all } x \in  \RN. 
	\]
Since $\nu \in (0,\lambda)$ is arbitrary, we have the upper estimates in \eqref{eq:3.9}.

	On the other hand, let $G_{\lambda}$ be the Green function for $-\Delta + \lambda$. 
	It is known that 
	\begin{equation}\label{eq:3.10}
	\begin{aligned}
	&
	G_{\lambda} \in C(\RN \setminus \{0\}) \cap L^1(\RN), 
	\quad G_\lambda(x) > 0 \quad  {\rm in} \ \RN \setminus \{0\}, 
	\\
	&
	\lim_{|x| \to \infty} G_{\lambda}(x) |x|^{(N-1)/2} e^{\sqrt{\lambda} |x| } 
	= \gamma_0 \in (0,\infty). 
	\end{aligned}
	\end{equation}
	For instance, see \cite{GNN-81}. Since $V(x) \leq 0$ and $u>0$, 
	we have 
	\[
	-\Delta G_{\lambda} + \lambda G_{\lambda} = 0 
	\leq - V(x) u + \mu u^{2p+1} = -\Delta u + \lambda u
	\quad {\rm in} \ [|x|>1]. 
	\]
	Noting $G_\lambda(x), u(x) \to 0$ as $|x| \to \infty$ and choosing $c_1>0$ so that 
	$c_1 G_{\lambda} (x) \leq u(x)$ for any $|x| = 1$, we see that 
	$u(x) - c_1 G_{\lambda} (x) $ does not have negative minimum on $[|x| > 1]$. 
	Thus $u - c_1 G_{\lambda}  \geq 0$ in $[|x| \geq 1]$ and 
	it follows from \eqref{eq:3.10} that 
	\[
	c_0 \left( 1+|x| \right)^{-(N-1)/2} \exp \left( - \sqrt{\lambda} |x| \right) 
	\leq u(x) \quad {\rm for \ all} \ x \in \RN. 
	\]
	Hence, \eqref{eq:3.9} and Step 1 hold.

	\medskip

	Next, we treat (i) and (iii). Note that when $u_{2} \equiv 0$, 
	(i) is derived from \eqref{eq:3.9}. 
	Since the condition $u_1,u_2>0$ is assumed in (iii), 
	in what follows, we may suppose 
	$u_1,u_2 > 0$ without loss of generality.

	\medskip
	
	\noindent
	\textbf{Step 2:} \textsl{Proof of (i). }
	
	\medskip

	Set $u(x) := u_1(x) + u_2(x) $. It follows from \eqref{eq:3.1} that 
	\[
	\begin{aligned}
	&-\Delta u + \lambda_{0,1} u 
	+ \left\{ V_1(x) u_1 + (V_2(x) + \lambda_{0,2} - \lambda_{0,1} ) u_2 \right\}
	\\
	= & \, 
	\mu_1 u_1^{2p_1 + 1} + \mu_2 u_2^{2p_2+1} 
	+ \beta \left( u_1^{p_3} u_2^{p_3+1} + u_1^{p_3+1} u_2^{p_3} \right).
	\end{aligned}
	\]
	Recalling $V_j(x) \leq 0$, $u_j(x) > 0$ and \eqref{eq:3.2}, we obtain 
	\[
	-\Delta u + \lambda_{0,1} u + 2 V(x) u 
	\leq 
	\mu_1 u^{2p_1+1} + \mu_2 u^{2p_2+1} 
	+ 2 \beta u^{2p_3+1} \quad {\rm in} \ \RN
	\]
	where $V(x) := V_1(x) + V_2(x)$. Hence, for $u$, 
	we can apply the above argument and show that 
for each $\lambda \in (0,\lambda_{0,1})$ there exists a $C_\lambda > 0$ so that 
	\begin{equation}\label{eq:3.11}
		u_1(x) + u_2(x) = u(x) \leq C_\lambda \exp \left( - \sqrt{\lambda} |x| \right) 
		\quad \text{for all } x \in \RN.
	\end{equation}

	On the other hand, since 
	\[
	-\Delta u_1 + \lambda_{0,1} u_1 
	= - V(x) u_1 + \mu_1 u_1^{2p_1+1} 
	+ \beta u_1^{p_3} u_2^{p_3+1} 
	\geq 0 = - \Delta G_{\lambda_{0,1}} + \lambda_{0,1} G_{\lambda_{0,1}} 
	\quad {\rm in} \ [|x|>1],
	\]
as in the above, we may find a $c_0>0$ such that 
	\[
	c_0 \left( 1 + |x| \right)^{-(N-1)/2} \exp \left( - \sqrt{\lambda_{0,1}} |x| \right) 
	\leq u_1(x) \quad {\rm for \ each} \ x \in \RN.
	\]
Hence, (i) holds.

	\medskip
	
	\noindent
	\textbf{Step 3:} \textsl{Proof of  (iii) when either $p_3 \geq 1$ or else
		$0<p_3<1$ and $\lambda_{0,3} = (1+p_3)^2 \lambda_{0,1} / (1-p_3)^2 > \lambda_{0,2}$.}
	
	\medskip
	
	When $1 \leq p_3$, notice that $u_{2}$ satisfies 
	\[
	-\Delta u_2 + \left( \lambda_{0,2} + V_2(x) 
	- \mu_2 u_2^{2p_2} - \beta u_1^{p_3+1} u_2^{p_3-1} \right) u_2 = 0 
	\quad {\rm in} \ \RN. 
	\]
	Since $V_2(x),u_1(x),u_2(x) \to 0$ as $|x| \to \infty$, 
	we may use the argument in Step 1 to prove that 
	for each $\lambda \in (0,\lambda_{0,2})$ we find a $C_\lambda >0$ with 
	\[
		u_2(x) \leq C_\lambda \exp \left( - \sqrt{\lambda} |x| \right) 
		\quad \text{for all }  x \in \RN. 
	\]

	On the other hand, from 
	\[
	-\Delta u_2 + \lambda_{0,2} u_2 
	= - V_2(x) u_2 + \mu_2 u_2^{2p_2+1} + \beta u_1^{p_3+1} u_2^{p_3} 
	\geq 0 = - \Delta G_{\lambda_{0,2}} + \lambda_{0,2} G_{\lambda_{0,2}}
	\quad {\rm in} \ [|x|>1],
	\]
	it follows that for some $c_0>0$,
	\[
	c_0 \left( 1 + |x| \right)^{-(N-1)/2} \exp \left( - \sqrt{\lambda_{0,2}} |x| \right) 
	\leq u_2(x) \quad {\rm for\ each} \ x \in \RN. 
	\]
	Hence, when $p_3 \geq 1$, (iii) holds.

	Next, suppose $0<p_3<1$ and $\lambda_{0,3} > \lambda_{0,2}$. 
	Since the lower estimate for $u_2$ can be shown as in the above, 
	we only prove the upper estimate. 
	Set 
	\[
	f_0(x) := \mu_2 u_2^{2p_2+1} (x) 
	+ \beta u_1^{p_3+1}(x) u_2^{p_3} (x). 
	\]
	By \eqref{eq:3.11}, we may find $\xi_1 > \lambda_{0,1}$ such that 
	\begin{equation}\label{eq:3.12}
	f_0(x) \leq C_0 \exp \left( - \sqrt{\xi_1} |x| \right) \quad {\rm for\ all} \ x \in \RN. 
	\end{equation}
	Let $\nu>0$ and set $\varphi_\nu(x) := C \exp ( - \sqrt{\nu} |x| )$. Then we have 
	\[
	-\Delta \varphi_\nu +\left( \lambda_{0,2} + V_2(x) \right) \varphi_\nu 
	\geq \left( \lambda_{0,2} + V_2(x) - \nu - \frac{N-1}{|x|} \sqrt{\nu} \right) 
	\varphi_\nu. 
	\]
	Therefore, if $\nu < \min \{ \lambda_{0,2} , \xi_1 \}$, 
	then we may find an $R_\nu >0$ such that 
	\begin{equation}\label{eq:3.13}
	-\Delta \varphi_\nu +\left( \lambda_{0,2} + V_2(x) \right) \varphi_\nu 
	\geq f_0(x) = - \Delta u_2 + \left( \lambda_{0,2} + V_2(x) \right) u_2 
	\quad {\rm in} \ [|x|>R_\nu]. 
	\end{equation}
	Hence, for all $\nu \in (0,  \min\{ \lambda_{0,2}, \xi_1 \} )$, 
	\begin{equation}\label{eq:3.14}
		u_2 (x) \leq C_\nu \exp \left( - \sqrt{\nu} |x| \right) \quad 
		{\rm for\ every} \ x \in  \RN. 
	\end{equation}

	If $\xi_1 \geq \lambda_{0,2}$, then \eqref{eq:3.14} is the desired estimate 
	and we complete the proof.  
	When $\xi_1 < \lambda_{0,2}$, for any $\e \in (0,\lambda_{0,1}/2)$, set 
	\[
	\sqrt{\xi_{2,\e}} := \min \left\{ (2p_2+1) \sqrt{ \xi_1-\e}, \ 
	(p_3+1)\sqrt{\lambda_{0,1} - \e} + p_3 \sqrt{\xi_1-\e} \right\}.
	\]
	By the definition of $f_0$ and \eqref{eq:3.14}, we have 
	\[
	f_0(x) \leq C_\e \exp \left( -\sqrt{\xi_{2,\e}} |x| \right) \quad 
	{\rm for \ all}\ x \in \RN.
	\]
	Repeating the argument above, 
	for each $0<\e < \lambda_{0,1}/2$ and $0<\nu < \min \{ \lambda_{0,2} , \xi_{2,\e} \}$, 
	we obtain \eqref{eq:3.13} and 
	\[
	u_2(x) \leq C_\nu \exp \left( - \sqrt{\nu} |x| \right) \quad 
	{\rm for\ every} \ x \in  \RN. 
	\]

	When $\xi_{2,\e} \geq \lambda_{0,2}$ for some $\e \in (0,\lambda_{0,1}/2)$, 
	we complete the proof. 
	If $\xi_{2,\e} < \lambda_{0,2}$ for any $\e \in (0,\lambda_{0,1}/2)$, define 
	\[
	\sqrt{\xi_{3,\e}} := \min \left\{ (2p_2+1) \sqrt{ \xi_{2,\e}}, \ 
	(p_3+1)\sqrt{\lambda_{0,1} - \e} + p_3 \sqrt{\xi_{2,\e}} \right\}.
	\]
	Then from 
	\[
	f_0(x) \leq C_\e \exp \left( - \sqrt{\xi_{3,\e}} |x| \right) \quad 
	{\rm for\ all} \ x \in \RN,
	\]
	it follows that for each $0<\e < \lambda_{0,1}/2$ and $0< \nu < \min\{ \lambda_{0,2} , \xi_{3,\e} \}$, 
	\[
	u_2 (x) \leq C_\nu \exp \left( - \sqrt{\nu} |x| \right) \quad 
	{\rm for\ every} \ x \in  \RN. 
	\]

	By repeating the same procedure, 
	if $\xi_{n,\e} < \lambda_{0,2}$ holds for all $\e \in (0,\lambda_{0,1}/2)$, then define 
	\begin{equation}\label{eq:3.15}
	\sqrt{\xi_{n+1,\e}} := \min \left\{ (2p_2+1) \sqrt{ \xi_{n,\e}}, \ 
	(p_3+1)\sqrt{\lambda_{0,1} - \e} + p_3 \sqrt{\xi_{n,\e}} \right\}.
	\end{equation}
	To prove (iii), it suffices to show that there are $\e_0 \in (0,\lambda_{0,1}/2)$ 
	and $n_0$ such that $\xi_{n_0,\e_0} \geq \lambda_{0,2}$. 
	If this is not true, then 
	\begin{equation}\label{eq:3.16}
	\xi_{n,\e} < \lambda_{0,2} \quad {\rm for\ every} \ 
	(\e,n) \in \left( 0, \frac{\lambda_{0,1}}{2} \right) \times \N. 
	\end{equation}

	By the assumption $(1+p_3)^2 \lambda_{0,1} / (1-p_3)^2 = \lambda_{0,3} > \lambda_{0,2}$, 
	we may select an $\e_0 \in (0,\lambda_{0,1}/2)$ so that 
	\begin{equation}\label{eq:3.17}
	\frac{(1+p_3)^2}{(1-p_3)^2} \left( \lambda_{0,1} - \e_0 \right) > \lambda_{0,2}. 
	\end{equation}
	Since $0<p_3<1$, $\sqrt{s} < (2p_2+1) \sqrt{s}$ for $s >0$ and 
	\begin{equation}\label{eq:3.18}
	\sqrt{s} < (p_3+1) \sqrt{\lambda_{0,1}-\e_0} + p_3 \sqrt{s} \quad 
	\Leftrightarrow \quad 
	s < \frac{(1+p_3)^2}{(1-p_3)^2} \left( \lambda_{0,1} - \e_0 \right),
	\end{equation}
it follows from \eqref{eq:3.15}--\eqref{eq:3.18} 
with $s = \xi_{n,\e}$ that $\xi_{n,\e_0} < \xi_{n+1,\e_0} < \lambda_{0,2}$.

	Let $\xi_{\infty,\e_0} := \lim_{n\to\infty} \xi_{n,\e_0} \leq \lambda_{0,2}$. Then, 
	\[
	\sqrt{\xi_{\infty,\e_0}} \leq (p_3+1) \sqrt{\lambda_{0,1} - \e_0 } + p_3 \sqrt{\xi_{\infty,\e_0}}. 
	\]
	If $\sqrt{\xi_{\infty,\e_0}} < (p_3+1) \sqrt{\lambda_{0,1} - \e_0} + p_3 \sqrt{\xi_{\infty,\e_0}}$, 
then for sufficiently large $n$, it follows that 
	\[
	\sqrt{\xi_{\infty,\e_0}} < (p_3+1) \sqrt{\lambda_{0,1}-\e_0} + p_3 \sqrt{\xi_{n,\e_0}}, \quad 
	\sqrt{\xi_{\infty,\e_0}} < (2p_1+1) \sqrt{\xi_{n,\e_0}}.
	\]
	From the definition of $\xi_{n+1,\e_0}$, we have a contradiction: 
	$\sqrt{\xi_{\infty,\e_0}} <  \sqrt{\xi_{n+1,\e_0}} $.

	Now, we get 
	\[
	\sqrt{\xi_{\infty,\e_0}} = (p_3+1) \sqrt{\lambda_{0,1}-\e_0} 
	+ p_3 \sqrt{\xi_{\infty,\e_0}}.
	\]
	However, by \eqref{eq:3.17} and $\xi_{\infty,\e_0} \leq \lambda_{0,2}$, we have a contradiction:
	$\xi_{\infty,\e_0} = (1+p_3)^2(\lambda_{0,1}-\e_0) / (1-p_3)^2 > \lambda_{0,2}$. 
	Therefore Step 3 holds. 
	
	\medskip
	
	\noindent
	\textbf{Step 4:} \textsl{Proof of  (iii) when $0<p_3<1$ and 
		$(1+p_3)^2 \lambda_{0,1} / (1-p_3)^2 = \lambda_{0,3} \leq \lambda_{0,2}$.}
	
	\medskip

	We first consider the upper estimate for $u_2$. 
	Remark that $\lambda_{0,1} < \lambda_{0,2}$ holds in this case. 
	Hence, at the estimate \eqref{eq:3.12}, we may suppose 
	\[
	\lambda_{0,1} < \xi_1 <  \frac{(1+p_3)^2  }{(1-p_3)^2} \lambda_{0,1}
	\leq \lambda_{0,2}
	\]
	and fix an $\e_1 \in (0,\lambda_{0,1}/2)$ so that 
	\begin{equation}\label{eq:3.19}
	\lambda_{0,1} < \xi_1 < \frac{(1+p_3)^2  }{(1-p_3)^2} 
	\left( \lambda_{0,1} - \e_1 \right). 
	\end{equation}
	We argue as in Step 3 and define $(\xi_{n,\e})$ by \eqref{eq:3.15} with $\xi_{1,\e} := \xi_1 - \e$ 
	for $\e \in (0,\e_1)$, that is, 
	\[
		\sqrt{\xi_{n+1,\e}} := \min \left\{ (2p_2+1) \sqrt{ \xi_{n,\e}}, \ 
		(p_3+1)\sqrt{\lambda_{0,1} - \e} + p_3 \sqrt{\xi_{n,\e}} \right\} 
		\quad \text{for } n \geq 1.
	\]
	Recalling \eqref{eq:3.18} and \eqref{eq:3.19}, inductively, we can show that for each $n \geq 1$, 
	\[
		\begin{aligned}
			\sqrt{\xi_{n+1,\e}} \leq (p_3+1)\sqrt{\lambda_{0,1} - \e} + p_3 \sqrt{\xi_{n,\e}} 
			&\leq (p_3+1)\sqrt{\lambda_{0,1} - \e} + p_3 \sqrt{\xi_{\infty,\e}} 
			= \sqrt{\xi_{\infty,\e}},
			\\
			\xi_{n,\e} &\leq \xi_{n+1,\e}
		\end{aligned}
	\]
	where 
		\[
			\xi_{\infty,\e} := \frac{(1+p_3)^2  }{(1-p_3)^2} 
			\left( \lambda_{0,1} - \e \right).
		\]
	As before, it is not difficult to check $\xi_{n,\e} \to \xi_{\infty,\e}$. 
	Furthermore, $\xi_{\infty,\e } \to \lambda_{0,3}$ as $\e \to 0$. 
	Since for each $0< \e < \e_1$ and $0< \nu < \min \{ \xi_{n+1,\e} , \lambda_{0,2} \}$, 
	one finds a $C_\nu>0$ so that 
	\[
		u_2(x) \leq C_\nu \exp \left( - \sqrt{\nu} |x| \right) 
		\quad {\rm for\ each\ } x \in  \RN,
	\]
we may infer that for each $0< \nu < \lambda_{0,3}$ there exists a $C_\nu>0$ with 
	\[
		u_2 (x) \leq C_\nu \exp \left( - \sqrt{\nu} |x| \right) 
		\quad {\rm for\ all} \ x \in \RN. 
	\]

	Next, we show the lower estimate for $u_2$. 
	First, as in Step 3, we have 
	\begin{equation}\label{eq:3.20}
	c_0 \left( 1 + |x| \right)^{-(N-1)/2} \exp \left( -\sqrt{\lambda_{0,2}} |x| \right) 
	\leq u_2(x).
	\end{equation}
	Thus, if 
	\[
		\lambda_{0,3} = \frac{(1+p_3)^2}{(1-p_3)^2} \lambda_{0,1} = \lambda_{0,2},
	\]
	then our assertion clearly holds. Therefore, we may suppose $\lambda_{0,3} < \lambda_{0,2}$. 
	Set
	\[
	\sqrt{\zeta_1} := (p_3+1) \sqrt{\lambda_{0,1}} + p_3 \sqrt{\lambda_{0,2}}.
	\]
	By \eqref{eq:3.18} with $\e_0 = 0$, we have 
	\begin{equation}\label{eq:3.21}
	\sqrt{\lambda_{0,3}} = (p_3+1) \sqrt{\lambda_{0,1}} + p_3 \sqrt{\lambda_{0,3}} 
	< (p_3+1) \sqrt{\lambda_{0,1}} + p_3 \sqrt{\lambda_{0,2}} = \sqrt{\zeta_1}
	< \sqrt{\lambda_{0,2}}. 
	\end{equation}
	Then by (i) and \eqref{eq:3.20}, for some $\alpha_1>0$ and $\tilde{c}_1>0$, 
	\[
	f_0(x) \geq \beta u_1^{p_3+1}(x) u_2^{p_3}(x) 
	\geq \tilde{c}_1 (1+|x|)^{-\alpha_1} \exp \left( - \sqrt{\zeta_1} |x| \right). 
	\]
	Noting 
	\[
	-\Delta u_2 + \lambda_{0,2} u_2 = - V_2 (x) u_2 
	+ f_0(x) \quad {\rm in} \ \RN
	\]
	and $V_2 \leq 0$ and recalling the Green function $G_{\lambda_{0,2}}$, we have 
	\[
	\begin{aligned}
	u_2(x) &= \int_{\RN} G_{\lambda_{0,2}} (x-y) 
	\left( - V_2(y) u_2(y) + f_0(y) \right) \rd y 
	\\
	&\geq \tilde{c}_1 \int_{\RN} G_{\lambda_{0,2}} (x-y) 
	(1+|y|)^{-\alpha_1} \exp \left( - \sqrt{\zeta_1} |y| \right) \rd y .
	\end{aligned}
	\]
	By \eqref{eq:3.10}, \eqref{eq:3.21} and applying Lemma \ref{lemma:3.3}, one sees that 
	\[
	\begin{aligned}
	&(1+r)^{\alpha_1} \exp \left( \sqrt{\zeta_1} r \right) 
	\int_{\RN} G_{\lambda_{0,2}} (r \omega -y) 
	(1+|y|)^{-\alpha_1} \exp \left( - \sqrt{\zeta_1} |y| \right) \rd y 
	\\
	\to & \,  
	 \int_{\RN} G_{\lambda_{0,2}} (y) e^{\sqrt{\zeta_1} \omega \cdot y} \rd y > 0
	\quad \text{unifromly with respect to $\omega \in \SN$.} 
	\end{aligned}
	\]
	Hence, for some $c_1 > 0$, 
	\[
	u_2(x) \geq c_1 (1+|x|)^{-\alpha_1} \exp \left( - \sqrt{\zeta_1} |x| \right) 
	\quad {\rm for\ each} \ x \in \RN. 
	\]

	Next, set 
	\[
	\sqrt{\zeta_2} := (p_3+1) \sqrt{\lambda_{0,1}} + p_3 \sqrt{\zeta_1}.
	\]
	Then arguing as in the above, we have 
	\[
	\lambda_{0,3} < \zeta_2 < \zeta_1
	\]
	and for some $\tilde{c}_2>0$ and $\alpha_2 > 0$, 
	\[
	f_0(x) \geq \beta u_1^{p_3+1}(x) u_2^{p_3} (x) 
	\geq 
	\tilde{c}_2 (1+|x|)^{-\alpha_2} \exp \left( -\sqrt{\zeta_2} |x| \right).
	\]
	From this and Lemma \ref{lemma:3.3}, for some $c_2 > 0$, it follows that 
	\[
	u_2(x) \geq c_2 \left( 1 + |x| \right)^{-\alpha_2} \exp \left(  -\sqrt{\zeta_2} |x| \right).
	\]
	Inductively, we define 
	\[
	\zeta_{n+1} := (p_3+1) \sqrt{\lambda_{0,1}} + p_3 \sqrt{\zeta_n}.
	\]
	Then we can check that 
	\[
	\begin{aligned}
	&\lambda_{0,3} < \zeta_{n+1} < \zeta_n, \quad 
	f_0(x) \geq \tilde{c}_{n+1} (1+|x|)^{-\alpha_{n+1}} 
	\exp \left( - \sqrt{\zeta_{n+1}} |x| \right),
	\\
	&u_2 (x) \geq c_{n+1} \left( 1 + |x| \right)^{-\alpha_{n+1}} \exp \left( - \sqrt{\zeta_{n+1}} |x| \right). 
	\end{aligned}
	\]
	Finally, since $\lambda_{0,3} < \zeta_{n+1} < \zeta_n$, it is easily seen that 
	$\zeta_n \to \lambda_{0,3}$. 
	Hence, the desired lower estimate holds and we complete the proof. 
\end{proof}

\subsection{Inequality for the interaction estimate}
\label{section:3.2}

In this subsection, we prepare some inequalities for the interaction estimate:
\begin{lemma}\label{lemma:3.4}
	Let $p>0$. 
	
	\noindent
	\emph{(i)} There exists a $C_p>0$ such that 
	for any $a, b \geq 0$, 
	\[
	(a+b)^{2p+2} \geq a^{2p+2} + b^{2p+2} 
	+ \left( 2p + 2 \right) \left( a^{2p+1} b + a b^{2p+1} \right) 
	- C_p a^{p+1} b^{p+1} .
	\]
	\emph{(ii)} Let $\eta \in (0,p)$. Then there exists a $C_{p,\eta}>0$ such that 
	for any $a_i,b_i \geq 0$, 
	\[
	\begin{aligned}
	(a_1+b_1)^{p+1}(a_2+b_2)^{p+1} &\geq 
	a_1^{p+1}a_2^{p+1} + b_1^{p+1} b_2^{p+1} 
	+ \left( p + 1 \right) 
	\left( a_1^p  a_2^{p+1} b_1 + a_1^{p+1} a_2^p b_2 
	+ a_2 b_1^{p+1} b_2^p \right) 
	\\
	& \quad 
	- C_{p,\eta}
	\left( a_1^{p-\eta}  a_2^{p+1} b_1^{1+\eta}
	+ a_1^{1+\eta}  b_2^{p+1} b_1^{p-\eta} \right).
	\end{aligned}
	\]
\end{lemma}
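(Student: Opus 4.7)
For part (i), the plan is to exploit the homogeneity of every term (each of degree $2p+2$) together with the symmetry $a\leftrightarrow b$. Modulo the trivial case $a=b=0$, rescale by $\max(a,b)$ and assume $b=1$, $a\in[0,1]$. Set $F(a):=(1+a)^{2p+2}-a^{2p+2}-1-(2p+2)(a+a^{2p+1})+C_p a^{p+1}$ and decompose $F(a)=\bigl[(1+a)^{2p+2}-1-(2p+2)a\bigr]+a^{p+1}\bigl[C_p-a^{p+1}-(2p+2)a^p\bigr]$. The first bracket is non-negative since the tangent line to $x\mapsto x^{2p+2}$ at $x=1$ lies below the graph; the second is $\geq C_p-1-(2p+2)$ because $a^{p+1},a^p\leq 1$ on $[0,1]$. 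Taking $C_p\geq 2p+3$ completes the proof.

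For part (ii), homogeneity reduces the inequality to the compact set $K:=\{(a_1,b_1,a_2,b_2)\in[0,1]^4:\max = 1\}$. Denote by $\Psi$ the LHS minus the positive terms on the RHS, and let $\Phi:=a_1^{p-\eta}a_2^{p+1}b_1^{1+\eta}+a_1^{1+\eta}b_2^{p+1}b_1^{p-\eta}$. The goal becomes $\Psi+C_{p,\eta}\Phi\geq 0$ on $K$. Both $\Psi$ and $\Phi$ are continuous on $K$, so by compactness it suffices to verify two things: (a) $\Psi\geq 0$ on the zero set $\{\Phi=0\}$, and (b) $\limsup(-\Psi/\Phi)<\infty$ as one approaches this zero set from $\{\Phi>0\}$.

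For (a), observe that $\{\Phi=0\}=\{a_1=0\}\cup\{b_1=0\}\cup\{a_2=b_2=0\}$. On each piece $\Psi$ collapses to a standard one-variable convexity inequality: for instance at $a_1=0$ one gets $\Psi=b_1^{p+1}\bigl[(a_2+b_2)^{p+1}-b_2^{p+1}-(p+1)a_2 b_2^p\bigr]\geq 0$; at $b_1=0$ one gets the analogous estimate with $a_2^p b_2$, and at $a_2=b_2=0$ both sides vanish. For (b) one carries out an asymptotic analysis near each boundary piece. As $a_1\to 0$, the leading negative contribution to $\Psi$ is the non-smooth term $-(p+1)a_1^p a_2^{p+1}b_1=O(a_1^p)$, while $\Phi\geq a_1^{p-\eta}a_2^{p+1}b_1^{1+\eta}$, so $-\Psi/\Phi=O(a_1^\eta)\to 0$. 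As $b_1\to 0$, the linear term $\partial_{b_1}\Psi\big|_{b_1=0}=(p+1)a_1^p[(a_2+b_2)^{p+1}-a_2^{p+1}]\geq 0$ is non-negative, so the leading deficit is the higher-order $-b_1^{p+1}b_2^{p+1}$; since both exponents $p-\eta$ and $1+\eta$ are at most $p+1$ (for $\eta\in(0,p)$), the ratio stays bounded. The locus $a_2=b_2\to 0$ is handled uniformly in scale because every contributing term is of size $(a_2+b_2)^{p+1}$.

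The main obstacle is exactly the asymptotic analysis in (b): the exponent pair $(p-\eta,1+\eta)$ in $\Phi$ has been engineered so that the $a_1^{p-\eta}$ branch absorbs the non-$C^2$ behavior of the $a_1^p$-type negative deficit near $a_1=0$ (the issue flagged in the introduction for $p_3<1$), while the $b_1^{p-\eta}$ branch absorbs the analogous deficit near $b_1=0$. The constraint $\eta\in(0,p)$ is precisely what keeps both branches of $\Phi$ positive and dominates both deficits simultaneously. Once (a) and (b) are established, the continuous function $\Psi+C\Phi$ on the compact set $K$ is bounded below by zero for $C:=C_{p,\eta}$ sufficiently large, which concludes the proof.
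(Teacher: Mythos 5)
Your part (i) is correct and complete: the reduction to $b=1$, $a\in[0,1]$ by symmetry and homogeneity, and the decomposition of $F$ into the tangent-line term $(1+a)^{2p+2}-1-(2p+2)a\ge 0$ plus $a^{p+1}\bigl[C_p-a^{p+1}-(2p+2)a^p\bigr]$, give a clean self-contained proof, whereas the paper simply cites \cite{BL-90} for this inequality. For part (ii) your route is genuinely different from the paper's: the inequality is in fact bi-homogeneous (invariant under independent scalings of $(a_1,b_1)$ and of $(a_2,b_2)$), and the paper divides by $b_1^{p+1}b_2^{p+1}$ to reduce to an explicit two-variable inequality $f(x,y)\ge 0$ on $[0,\infty)^2$ with $x=a_1/b_1$, $y=a_2/b_2$, verified by a three-region monotonicity analysis. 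Your diagonal-scaling reduction to the compact set $K$ and the contradiction argument (a sequence $z_n\to z_*$ with $\Phi(z_*)=0$ and $-\Psi(z_n)/\Phi(z_n)\to\infty$) is a valid framework, and your step (a) — the identification of $\{\Phi=0\}$ and the convexity inequalities on each face — is correct.

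The gap is in step (b). First, even on the generic part of a face you track only one deficit term. Near $a_1=0$ the deficit of $\Psi$ also contains $-(p+1)a_1^{p+1}a_2^{p}b_2$ and $-a_1^{p+1}a_2^{p+1}$, and the former is \emph{not} controlled by the branch $a_1^{p-\eta}a_2^{p+1}b_1^{1+\eta}$ alone: the ratio is $a_1^{1+\eta}b_2/(a_2 b_1^{1+\eta})$, which blows up when $a_2\to 0$ with $b_2$ fixed. One must switch to the branch $a_1^{1+\eta}b_2^{p+1}b_1^{p-\eta}$ there, and at points where $a_2$ and $b_2$ vanish at comparable or competing rates neither branch alone suffices — one needs the sum of the two, via a weighted AM--GM with weight $\theta=p/(p+1)$, and it is only at that stage that the arithmetic condition $\eta(1-p)\le 2p$ (guaranteed by $0<\eta<p$) is actually used. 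Second, and more seriously, your limsup condition must hold at \emph{every} point of $\{\Phi=0\}\cap K$, including the lower-dimensional strata where the faces intersect, e.g.\ $a_1=b_1=0$ with $\max(a_2,b_2)=1$, or $b_1=a_2=b_2=0$ with $a_1=1$. Your three cases each treat one face with the complementary variables implicitly bounded away from zero, so these corners are not covered; they are exactly where one must choose between the two convexity lower bounds for the left-hand side (expanding around $(a_1,a_2)$ when $a_1\le b_1$, around $(b_1,b_2)$ when $a_1>b_1$) and combine the result with the appropriate branch of $\Phi$. All of this can be carried out, so the strategy is salvageable, but as written the hardest part of the verification is missing; the paper's two-variable reduction sidesteps the stratification entirely at the cost of a more computational explicit case analysis.
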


\begin{proof}
	The assertion (i) is proved in \cite[Lemma 2.1]{BL-90}. For the assertion (ii), 
it is enough to consider $b_1,b_2>0$. 
Dividing by $b_1^{p+1} b_2^{p+1}$ and 
setting $x = a_1/b_1$ and $y = a_2/b_2$, 
we shall show that for each $x,y \geq 0$ 
	\[
	\begin{aligned}
	0 \leq f(x,y) 
	&: = (x+1)^{p+1}(y+1)^{p+1} 
	-x^{p+1}y^{p+1} - 1			
	\\
	& \qquad  - \left( p + 1 \right) \left( x^p y^{p+1}  + x^{p+1} y^p + y  \right) 
	+ C_{p,\eta} \left( x^{p-\eta} y^{p+1} + x^{1+\eta} \right).
	\end{aligned}
	\]
	In what follows, we may assume $C_{p,\eta} \geq 1$ and divide our proof into three steps.

	\medskip
	
	\noindent
	\textbf{Step 1:} \textsl{There exists an $x_1>0$ such that $f(x,y) \geq 0$ for all 
		$(x,y) \in [0,x_1] \times [0,\infty)$. }
	
	\medskip

	By simple computations, we obtain 
	\[
	\begin{aligned}
	\frac{\partial f}{\partial x} 
	&= (p+1)(x+1)^p (y+1)^{p+1} - (p+1) x^p y^{p+1} 
	-  (p+1) \left[  p x^{p-1} y^{p+1} + (p+1) x^p y^p \right]
	\\
	& \qquad 
	+  C_{p,\eta} \left[ (p-\eta) x^{p-\eta-1} y^{p+1} + (1+\eta) x^\eta \right]
	\\
	&=
	(p+1) \left[ (x+1)^p (y+1)^{p+1} - x^p y^{p+1} - (p+1) x^p y^p \right] 
	\\
	& \qquad + x^{p-\eta-1} y^{p+1} 
	\left[ (p-\eta) C_{p,\eta} - p(p+1) x^\eta \right] +  C_{p,\eta} (1+\eta) x^\eta.
	\end{aligned}
	\]
	Let 
	\[
	x^p \leq \frac{1}{2(p+1)} < \frac{1}{2}. 
	\]
	From $\max\{y^p, y^{p+1} \} \leq (1+y)^{p+1}$ for every $y \geq 0$, 
	it follows that 
	\[
	(x+1)^{p}(y+1)^{p+1} - x^p y^{p+1} - (p+1) x^p y^p 
	\geq \left( y +1 \right)^{p+1} - \frac{1}{2} y^{p+1} - \frac{1}{2} y^p \geq 0
	\quad {\rm for\ any} \ y \geq 0.
	\] 
	On the other hand, if
	\[
	x^\eta \leq \frac{p-\eta}{p(p+1)},
	\]
	then from $C_{p,\eta} \geq 1$ we see that 
	\[
	(p-\eta) C_{p,\eta} - p(p+1) x^\eta \geq 0.
	\]
	Thus choosing 
	\[
	x_1 := \min \left\{ \left[ \frac{1}{2(p+1)} \right]^{1/p} , \ 
	\left[ \frac{p-\eta}{p(p+1)} \right]^{1/\eta} \right\} > 0,
	\]
	we obtain 
	\[
	\frac{\partial f}{\partial x} (x,y) \geq 0 \quad 
	{\rm for\ each} \ (x,y) \in [0,x_1] \times [0,\infty).
	\]
	By $f(0,y) = (y+1)^{p+1} - 1 - (p+1) y \geq 0 $ for all $y \geq 0$, 
	Step 1 holds. 
	
	\medskip
	
	\noindent
	\textbf{Step 2:} \textsl{There exists a $y_1>0$ such that $f(x,y) \geq 0$ 
		for all $(x,y) \in [x_1,\infty) \times [0,y_1]$. }
	
	\medskip
	
	Let $x \geq x_1$. 
	Since $C_{p,\eta} \geq 1$ and $(y+1)^{p+1} \geq 1 + (p+1) y$ holds for each $y \geq 0$, 
	we have 
	\[
	\begin{aligned}
	f(x,y) 
	&\geq (x+1)^{p+1} \left[ 1 + (p+1) y \right] 
	- x^{p+1} y^{p+1} - 1 - (p+1) ( x^p y^{p+1} + x^{p+1} y^p +y )  
	+  x_1^{1+\eta}
	\\
	&= \left[ (x + 1)^{p+1} - 1 - (p+1) x^{p+1} y^p \right] 
	+ \left[ x_1^{1+\eta} - (p+1) y \right]
	\\
	& \quad 
	+ y \left[ (p+1) (x+1)^{p+1} -  x^{p+1} y^p- (p+1) x^p y^p   \right].
	\end{aligned}
	\]
	Noting $x \geq x_1$, we may find a $c_{x_1} \in (0,1)$ so that 
	\[
	(x+1)^{p+1} - 1 \geq c_{x_1} (x+1)^{p+1} \quad {\rm for\ every} \ x \in [x_1,\infty). 
	\]
	If 
	\[
	y^p \leq \frac{c_{x_1}}{p+1},
	\]
	then we obtain 
	\[
	(x+1)^{p+1} - 1 - (p+1) x^{p+1} y^p 
	\geq c_{x_1} (x+1)^{p+1} - c_{x_1}x^{p+1} > 0 
	\quad {\rm for\ every} \ x \in [x_1,\infty).
	\]

	On the other hand, if 
	\[
	y^p \leq \frac{p+1}{p+2},
	\]
	then for every $x \in [x_1,\infty)$, 
	\[
	\begin{aligned}
	(p+1) (x+1)^{p+1} - x^{p+1} y^p - (p+1) x^p y^p
	\geq (x+1)^{p+1} \left[ p+1 - y^p - (p+1) y^p \right] \geq 0.
	\end{aligned}
	\]
	Hence, setting 
	\[
	y_1 := \min \left\{ \left(\frac{c_{x_1}}{p+1} \right)^{1/p} ,  \ 
	\left(\frac{p+1}{p+2} \right)^{1/p}, \ \frac{x_1^{1+\eta}}{p+1} \right\} > 0,
	\]
	we see that 
	\[
	f(x,y) \geq 0 \quad {\rm for\ each} \ (x,y) \in [x_1,\infty) \times [0,y_1]. 
	\]
	
	\medskip
	
	\noindent
	\textbf{Step 3:} \textsl{$f(x,y) \geq 0$ for all $(x,y) \in [x_1,\infty) \times [y_1,\infty)$. }
	
	\medskip

	Let $x \geq x_1$ and $y \geq y_1$. Note that 
	\[
	\begin{aligned}
	& x^{-p-1}f(x,y) 
	\\
	= & \,
	\left( 1 + \frac{1}{x} \right)^{p+1} (y+1)^{p+1} - y^{p+1} - \frac{1}{x^{p+1}} 
	- (p+1) \left( \frac{y^{p+1}}{x} + y^p + \frac{y}{x^{p+1}} \right) 
	+ C_{p,\eta} \left( \frac{y^{p+1}}{x^{1+\eta}} + x^{\eta - p} \right)
	.
	\end{aligned}
	\]
	Then we observe that for every $(x,y) \in [x_1,\infty) \times [y_1,\infty)$ 
	\[
	\begin{aligned}
	& \left( 1 + \frac{1}{x} \right)^{p+1} (y+1)^{p+1} 
	- y^{p+1} - (p+1) \left( \frac{y^{p+1}}{x} + y^p \right) 
	\\
	\geq \, & 
	\left( 1 + \frac{p+1}{x} \right) (y + 1)^{p+1} 
	- y^{p+1}  - (p+1) \frac{y^{p+1}}{x} - (p+1) y^p
	\\
	= \, & 
	\left[ (y+1)^{p+1} - y^{p+1} - (p+1) y^p \right] 
	+ \frac{p+1}{x} \left[ (y+1)^{p+1} - y^{p+1} \right]
	\\
	= \, & 
	y^{p+1} 
	\left[ \left( 1 + \frac{1}{y} \right)^{p+1} -1 - \frac{p+1}{y} \right] 
	+ \frac{p+1}{x} \left[ (y+1)^{p+1} - y^{p+1} \right]
	\\
	\geq \, & 0.
	\end{aligned}
	\]
	On the other hand, choosing 
	\[
	C_{p,\eta} \geq \max \left\{ 1, \ \frac{2}{ x_1^{p-\eta} y_1^{p+1} }, \ 
	\frac{2(p+1)}{x_1^{p-\eta} y_1^p }
	\right\},
	\]
	it follows from $0<\eta < p$ that for each $(x,y) \in [x_1,\infty) \times [y_1,\infty)$, 
	\[
	\begin{aligned}
	- \frac{1}{x^{p+1}} - \frac{(p+1)y}{x^{p+1}}+ C_{p,\eta} \frac{y^{p+1}}{x^{1+\eta}} 
	&= \frac{1}{x^{1+\eta}} 
	\left[ \left( \frac{C_{p,\eta} y^{p+1}}{2} - \frac{1}{x^{p-\eta}} \right) 
	+ y \left( \frac{C_{p,\eta} y^p}{2} - \frac{p+1}{x^{p-\eta}} \right) \right] 
	\\
	&\geq \frac{1}{x^{1+\eta}} 
	\left[ \left( \frac{C_{p,\eta} y_1^{p+1}}{2} - \frac{1}{x_1^{p-\eta}} \right) 
	+ y \left( \frac{C_{p,\eta} y_1^p}{2} - \frac{p+1}{x_1^{p-\eta}} \right) \right] \geq 0.
	\end{aligned}
	\]
	Hence, $f(x,y) \geq 0$ for each $(x,y) \in [x_1,\infty) \times [y_1,\infty)$ and 
	Step 3 holds.

	From Step 1 through Step 3, we complete the proof. 
\end{proof}

\subsection{Proof of Theorem \ref{theorem:1.1}}
\label{section:3.3}

	In this subsection, we shall prove Theorem \ref{theorem:1.1}. 
Let us take any minimizing sequence $(u_n) \subset M(\alpha) $ for $e(\alpha)$. 
By Lemmas \ref{lemma:2.2} and \ref{lemma:2.3}, without loss of generality, 
we may further suppose that
	\begin{equation}\label{eq:3.22}
		E(u_n) \to \ea, \quad 
		\left\| E'(u_n) + \lambda_{n,1} Q_1'(u_n) + \lambda_{n,2} Q_2'(u_n) \right\|_{H^\ast} \to 0, 
		\quad \left\| (u_{n,i})_- \right\|_{L^2} \to 0.
	\end{equation}
In addition, since $(u_n)$ and $(\lambda_{n,i})$ are bounded, one may assume that 
$u_n \rightharpoonup u_0$ weakly in $H$ and 
$\lambda_{n,i} \to \lambda_{0,i}$. Remark that 
	\[
		\gamma_{u_0,i} := \| u_{0,i} \|_{L^2}^2 =  2 Q_i(u_0) \leq \alpha_i.
	\]
Notice that if $\gamma_{u_0} = (\gamma_{u_0,1}, \gamma_{u_0,2} ) = (\alpha_1,\alpha_2)$, then 
$\| u_{n,i} - u_{0,i} \|_{L^2}^2 \to 0$ with $i=1,2$. 
Moreover, from \eqref{eq:3.22}, $\ea \leq E(u_0)$ and the weak lower semicontinuity, 
it follows that $\| \nabla u_{n,i} \|_{L^2} \to \| \nabla u_{0,i} \|_{L^2}$ and 
$\| u_n - u_0 \|_{H} \to 0$.

	Now, our aim is to prove $\gamma_{u_0} = (\alpha_1,\alpha_2)$. 
We argue by contradiction and suppose that 
	\[
		\gamma_{u_0} := (\gamma_{u_0,1}, \gamma_{u_0,2} ) \neq (\alpha_1,\alpha_2).
	\]
We first prove 

	\begin{lemma}\label{lemma:3.5}
		Let $Q^N := [0,1]^N$. Then  
		$\liminf_{n \to \infty} \sup_{z \in \Z^N} \| u_n - u_0 \|_{L^2(Q^N+z)} >0$. 
	\end{lemma}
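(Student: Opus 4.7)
The plan is to argue by contradiction. Assume, after passing to a subsequence, that $\sup_{z \in \Z^N}\|u_n - u_0\|_{L^2(Q^N+z)} \to 0$. Since $v_n := u_n - u_0$ is bounded in $H$ by Lemma \ref{lemma:2.1}(i), Lions' vanishing lemma yields $v_n \to 0$ strongly in $L^q(\RN) \times L^q(\RN)$ for every $q \in (2, 2^\ast)$. By (p1) each of the exponents $2p_1+2$, $2p_2+2$, $2p_3+2$ lies in this range (interpreting $2^\ast = \infty$ for $N \leq 2$), so Hölder's inequality for the coupling term, together with the boundedness of $(u_n)$ in the relevant $L^{2p_i+2}$-spaces, implies that each nonlinear term of $E(u_n)$ converges to the corresponding term evaluated at $u_0$.

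Next I would establish the orthogonal expansions
\[
E(u_n) = E(u_0) + \frac{1}{2}\sum_{i=1}^{2} \|\nabla v_{n,i}\|_{L^2}^2 + o(1), \qquad \|v_{n,i}\|_{L^2}^2 = \alpha_i - \gamma_{u_0,i} + o(1).
\]
The gradient and $L^2$ cross terms vanish because $v_{n,i} \rightharpoonup 0$ weakly in $H^1$; the nonlinear cross terms vanish by the strong $L^q$-convergence above; and the potential integrals $\int_{\RN} V_i |u_{n,i}|^2 \rd x$ pass to $\int_{\RN} V_i |u_{0,i}|^2 \rd x$ because $V_i$ is bounded and tends to $0$ at infinity (V1), combined with $v_n \to 0$ in $L^2_{\mathrm{loc}}$ by Rellich's theorem (for every $\e > 0$, split the integral into a large ball where Rellich gives convergence and its complement where $|V_i| \leq \e$).

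Combining $E(u_n) \to \ea$ with this expansion and the bound $E(u_0) \geq e(\gamma_{u_0})$ (with $e(0,0) := 0$ if needed, since in that case $u_0 \equiv 0$ and $E(u_0)=0$), then invoking Lemma \ref{lemma:2.1}(iii), I obtain
\[
0 \leq \frac{1}{2}\lim_{n \to \infty}\sum_{i=1}^{2}\|\nabla v_{n,i}\|_{L^2}^2 = \ea - E(u_0) \leq \ea - e(\gamma_{u_0}) \leq e_\infty(\alpha - \gamma_{u_0}).
\]
Since $\alpha - \gamma_{u_0} \neq 0$ by hypothesis and $0 < p_1, p_2, p_3 < 2/N$, the mass-preserving rescaling $u \mapsto t^{N/2} u(t\,\cdot\,)$ applied to any admissible pair in $M(\alpha - \gamma_{u_0})$ drives $E_\infty$ strictly below $0$ as $t \to 0^+$ (the $L^{2p_i+2}$-terms scale like $t^{Np_i}$ with $Np_i < 2$, which dominates the $t^2$ gradient term). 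Hence $e_\infty(\alpha - \gamma_{u_0}) < 0$, contradicting the nonnegativity of the left-hand side.

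The only technically subtle step is the potential-term passage to the limit, which is exactly where the boundedness and decay at infinity of $V_1,V_2$ enter. Once the expansion is in hand, the contradiction is delivered purely by the upper bound of Lemma \ref{lemma:2.1}(iii) together with the standard negativity of $e_\infty$ in the $L^2$-subcritical regime — no strict subadditivity is invoked here (that is reserved for the later step in the main proof of Theorem \ref{theorem:1.1}).
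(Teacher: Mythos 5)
Your proposal is correct and follows essentially the same route as the paper: argue by contradiction, use Lions' vanishing lemma to upgrade to strong $L^q$-convergence, deduce $e(\gamma_{u_0}) \leq E(u_0) \leq \ea$, and contradict this via Lemma \ref{lemma:2.1}(iii) together with $e_\infty(\alpha - \gamma_{u_0}) < 0$. The only differences are that you spell out an explicit energy expansion and prove the negativity of $e_\infty$ by scaling, where the paper simply invokes weak lower semicontinuity and cites \cite{GJ-16}; these are elaborations, not a different argument.
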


\begin{proof}
Let us suppose $\sup_{z \in \Z^N}\| u_n - u_0 \|_{L^2(Q^N+z)} \to 0$. 
Then we infer that $u_n \to u_0$ strongly in $L^q(\RN)$ for every $2<q < 2^\ast$ 
(See \cite{W}, for instance). Therefore, we obtain 
	\begin{equation}\label{eq:3.23}
		e( \gamma_{u_0}) \leq  E(u_0) \leq \lim_{n\to \infty} E(u_n) = e(\alpha).
	\end{equation}
On the other hand, by Lemma \ref{lemma:2.1}(iii), it follows that 
	\[
		\ea  \leq e(\gamma_{u_0}) + e_\infty(\alpha - \gamma_{u_0}).
	\]
Since $\gamma_{u_0} \neq \alpha$, we have $e_{\infty}(\alpha - \gamma_{u_0}) < 0$, 
which contradicts \eqref{eq:3.23}. 
\end{proof}

Thanks to Lemma \ref{lemma:3.5}, we may find $(y_n) \subset \RN$ such that 
$\| u_n \|_{L^2(Q^N+y_n)} \to c_0>0$ and $|y_n| \to \infty$. Let 
	\[
		u_n(\cdot + y_n) \rightharpoonup w_0 = (w_{0,1},w_{0,2}) 
		\quad \text{weakly in} \ H.
	\]
Remark that 
	\begin{equation}\label{eq:3.24}
		w_0 \not \equiv (0,0).
	\end{equation}
Moreover, it follows from 
$|y_n| \to \infty$ as $n \to \infty$ that for $i = 1,2$, 
	\begin{equation}\label{eq:3.25}
		\begin{aligned}
		& \| u_{n,i} - u_{0,i} - w_{0,i} ( \cdot - y_n ) \|_{L^2}^2 
		\\
		= &\, \| u_{n,i} \|_{L^2}^2 + \|u_{0,i}\|_{L^2}^2 + \| w_{0,i} \|_{L^2}^2 
		- 2 \la u_{n,i} , u_{0,i} \ra_{L^2} - 2 \la u_{n,i}(\cdot + y_n) , w_{0,i} \ra_{L^2} + o(1)
		\\
		= &\, \| u_{n,i} \|_{L^2}^2 - \| u_{0,i} \|_{L^2}^2 - \| w_{0,i} \|_{L^2}^2 + o(1) .
		\end{aligned}
	\end{equation}
In particular, we have 
	\[
		\gamma_{w_0,i} := \| w_{0,i} \|_{L^2}^2 
		\leq \liminf_{n\to\infty} \left( \| u_{n,i} \|_{L^2}^2 - \| u_{0,i} \|_{L^2}^2 \right) 
		= \alpha_i - \gamma_{u_0,i}.
	\]

	Next, we show

	\begin{lemma}\label{lemma:3.6}
		We have $\| u_{n,i} - u_{0,i} - w_{0,i} (\cdot  - y_n) \|_{H^1} \to 0$ for $i=1,2$. 
		In addition, $\alpha = \gamma_{u_0} + \gamma_{w_0}$, 
		$E(u_0) = e(\gamma_{u_0})$, $E_\infty(w_0) = e_\infty(\gamma_{w_0})$ and 
		$\ea = e(\gamma_{u_0}) + e_\infty(\gamma_{w_0})$ 
		where $\gamma_{w_0} = ( \gamma_{w_0,1} , \gamma_{w_0,2} )$.
	\end{lemma}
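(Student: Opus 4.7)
The plan is to establish a Brezis--Lieb type splitting of the energy,
\begin{equation*}
E(u_n) = E(u_0) + E_\infty(w_0) + E_\infty(v_n) + o(1), \qquad v_n := u_n - u_0 - w_0(\cdot - y_n),
\end{equation*}
and then to combine it with the subadditivity of Lemma \ref{lemma:2.1}(iii). The kinetic piece splits by the standard weak-convergence identity, using both $u_n\rightharpoonup u_0$ and $u_n(\cdot+y_n)\rightharpoonup w_0$ in $H$ together with $|y_n|\to\infty$. For the potential integrals $\int V_i|u_{n,i}|^2$ I use that $V_i$ is bounded with $V_i(x)\to 0$ at infinity: the contribution of $w_0(\cdot-y_n)$ vanishes by decay of $V_i$, and that of $v_n$ vanishes by $v_n\to 0$ in $L^2_{\rm loc}$ combined with uniform smallness of $V_i$ at infinity, leaving only $\int V_i|u_{0,i}|^2$. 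The single-component powers $|u_{n,i}|^{2p_i+2}$ split by the classical Brezis--Lieb lemma, applied once to peel off $u_0$ and once more, after translation by $y_n$, to peel off $w_0$. The coupled term $|u_{n,1}|^{p_3+1}|u_{n,2}|^{p_3+1}$ admits the same splitting via the general continuous-functional Brezis--Lieb statement on $\C^2$: this needs only a.e.\ convergence and a uniform $L^{2p_3+2}$-bound, together with the growth estimate $|z_1|^{p_3+1}|z_2|^{p_3+1}\leq C(|z_1|^{2p_3+2}+|z_2|^{2p_3+2})$.

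Before invoking the splitting I must ensure that $v_n$ carries no residual $L^2$-concentration: if $v_n$ is not vanishing in Lions's sense, I extract a further translated bubble exactly as was done for $w_0$ and subtract it, and iterate. Since each extracted bubble carries positive $L^2$-mass and the total $L^2$-mass is bounded by $\alpha_1+\alpha_2$, the accumulated masses form a convergent series and after at most countably many extractions the residual remainder vanishes in the sense of Lions. Assuming for notational simplicity that a single profile $w_0$ suffices, one then has $v_n\to 0$ in every $L^q$ with $2<q<2^*$, so the nonlinear part of $E_\infty(v_n)$ vanishes and $E_\infty(v_n)=\tfrac12\sum_i\|\nabla v_{n,i}\|_{L^2}^2+o(1)\geq o(1)$. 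Setting $\delta:=\alpha-\gamma_{u_0}-\gamma_{w_0}$, so that $\|v_{n,i}\|_{L^2}^2\to\delta_i$, the splitting combined with $E(u_0)\geq e(\gamma_{u_0})$, $E_\infty(w_0)\geq e_\infty(\gamma_{w_0})$, and $\liminf E_\infty(v_n)\geq e_\infty(\delta)$ (the last from a brief rescaling of $v_n$ to exact mass $\delta_i$) yields $\ea\geq e(\gamma_{u_0})+e_\infty(\gamma_{w_0})+e_\infty(\delta)$.

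The reverse inequality is Lemma \ref{lemma:2.1}(iii) applied with $\gamma=\gamma_{u_0}$ followed by the translation-invariant analogue for $e_\infty$ (proved by the same test-function argument since $E_\infty$ is translation invariant), giving $\ea\leq e(\gamma_{u_0})+e_\infty(\alpha-\gamma_{u_0})\leq e(\gamma_{u_0})+e_\infty(\gamma_{w_0})+e_\infty(\delta)$. Equality must therefore hold throughout, so $E(u_0)=e(\gamma_{u_0})$, $E_\infty(w_0)=e_\infty(\gamma_{w_0})$, $\ea=e(\gamma_{u_0})+e_\infty(\gamma_{w_0})$, and $e_\infty(\delta)=0$. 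The identity $e_\infty(\delta)=0$, together with the strict negativity $e_\infty(\eta)<0$ for every $\eta\in[0,\infty)^2\setminus\{(0,0)\}$ (the two-component case being \cite{GJ-16}, each single-component case being the classical $L^2$-subcritical NLS result), forces $\delta=0$, i.e.\ $\alpha=\gamma_{u_0}+\gamma_{w_0}$. The strong $H^1$-convergence $\|u_{n,i}-u_{0,i}-w_{0,i}(\cdot-y_n)\|_{H^1}\to 0$ is then read off from $\|v_{n,i}\|_{L^2}\to 0$, the vanishing of the nonlinear part and $E_\infty(v_n)\to 0$, which together force $\|\nabla v_{n,i}\|_{L^2}\to 0$. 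The main obstacle will be the iterative bubble-extraction and keeping the energy splitting uniform across the iteration; the coupled-nonlinearity splitting itself is painless here because only continuity and polynomial growth are used, and the failure of $C^2$-regularity when $0<p_3<1$ becomes a genuine difficulty only later, in the interaction estimate, rather than in this lemma.
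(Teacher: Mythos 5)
Your energy-splitting bookkeeping and the ``equality must hold throughout'' conclusion match the second half of the paper's proof, but there is a genuine gap at the decisive step. The lemma's content is precisely that a \emph{single} bubble $w_0$ exhausts the escaping mass, i.e.\ that $v_n := u_n - u_0 - w_0(\cdot - y_n)$ tends to $0$ in $L^2$; you acknowledge that $v_n$ might fail to vanish in Lions's sense, propose to iterate the bubble extraction, and then conclude ``assuming for notational simplicity that a single profile suffices.'' That assumption is not notational: if a second profile $w^2$ with $\gamma_{w^2} \neq (0,0)$ is extracted, your key lower bound $\liminf_n E_\infty(v_n) \geq 0$ fails (the nonlinear part of $E_\infty(v_n)$ no longer vanishes), and the chain of inequalities only returns the tautology $e_\infty(\delta) \geq e_\infty(\delta)$ rather than $e_\infty(\delta) \geq 0$. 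To exclude two or more bubbles you must invoke the \emph{strict} subadditivity $e_\infty(\gamma) + e_\infty(\eta) > e_\infty(\gamma + \eta)$ for $\gamma, \eta \neq (0,0)$, which is the main compactness result of \cite{GJ-16}; you only ever cite the weaker fact $e_\infty(\eta) < 0$, which cannot do this job.

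The paper avoids the iteration entirely: it supposes $\|v_n\|_{L^2} \not\to 0$, uses the Brezis--Lieb splitting to get $e(\alpha) \geq e(\gamma_{u_0}) + e_\infty(\gamma_{w_0}) + e_\infty(\delta_0)$ with $\gamma_{w_0} \neq (0,0)$ (from \eqref{eq:3.24}) and $\delta_0 \neq (0,0)$, applies the strict subadditivity of $e_\infty$ to obtain $e_\infty(\gamma_{w_0}) + e_\infty(\delta_0) > e_\infty(\alpha - \gamma_{u_0})$, and contradicts Lemma \ref{lemma:2.1}(iii). Once $\|v_n\|_{L^2} \to 0$ is known, the rest of your argument (and the paper's) goes through. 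So the fix is concrete: replace the iterative extraction by this one-step contradiction, or, if you insist on iterating, use strict subadditivity of $e_\infty$ to show that the presence of a second nontrivial profile already violates $e(\alpha) \leq e(\gamma_{u_0}) + e_\infty(\alpha - \gamma_{u_0})$. A secondary, smaller caveat: your claim $\liminf_n E_\infty(v_n) \geq e_\infty(\delta)$ by rescaling needs the uniform continuity of $E_\infty$ on bounded sets (as the paper notes before \eqref{eq:3.30}); that part is fine but should be said.
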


\begin{proof}
We first claim that 
	\begin{equation}\label{eq:3.26}
		\| u_n - u_0 - w_0(\cdot - y_n) \|_{L^2} \to 0. 
	\end{equation}
We prove \eqref{eq:3.26} indirectly and suppose 
	\[
		\lim_{n\to \infty} \left\| u_n - u_0 - w_0(\cdot - y_n)  \right\|_{L^2} > 0.
	\]
By \eqref{eq:3.25}, one sees that 
$\gamma_{u_0} + \gamma_{w_0} \neq \alpha$. 
Applying the Brezis--Lieb Lemma and its proof (see, for instance, \cite[Lemma 1.32]{W}), 
for $2 \leq q \leq 2N/(N-2)_+$ and $ 1 \leq r \leq N/(N-2)_+$, we have 
	\[
		\begin{aligned}
			&\| u_{n,i} \|_{L^q}^q - \| u_{0,i} \|_{L^q}^q - \| u_{n,i} - u_{0,i} \|_{L^q}^q 
			= o(1), \\
			& \| (u_{n,i}-u_{0,i})(\cdot + y_n)) \|_{L^q}^q - \| w_{0,i} \|_{L^q}^q - 
			\| (u_{n,i}-u_{0,i})(\cdot + y_n)) - w_{0,i} \|_{L^q}^q = o(1),
			\\
			& \left\| |u_{n,i}|^{r} - |u_{0,i}|^{r} - | u_{n,i} - u_{0,i} |^{r} \right\|_{L^{2}} =o(1),
			\\
			&\left\| |(u_{n,i} - u_{0,i} ) (\cdot + y_n) |^{r} - |w_{0,i}|^{r} - | (u_{n,i} - u_{0,i})(\cdot + y_n) - w_{0,i}  |^{r} 
			\right\|_{L^{2}} =o(1).
		\end{aligned}
	\]
In particular, 
	\[
		| u_{n,i} (x) |^r = | u_{0,i} (x) |^r + 
		| w_{0,i} (x - y_n) |^r + 
		| u_{n,i} (x) - u_{0,i} (x) - w_{0,i} (x - y_{n}) |^r  + o(1) 
		\quad  \text{in} \ L^2(\RN).
	\]
Combining this with \eqref{eq:3.25} and $V_j(x) \to 0$ as $|x| \to \infty$, we obtain 
	\begin{equation}\label{eq:3.27}
		\begin{aligned}
			&E(u_n) - E(u_0) - E(w_0(\cdot - y_n)) - E(u_n-u_0-w_0(\cdot - y_n)) \to 0, \\ 
			&\delta_{0,i} := 
			\lim_{n\to\infty}\| u_{n,i}-u_{0,i}-w_{0,i}(\cdot - y_n) \|_{L^2}^2 
			= \alpha_i - \gamma_{u_0,i} - \gamma_{w_0,i}. 
		\end{aligned}
	\end{equation}
Here we notice that the assumption yields 
	\begin{equation}\label{eq:3.28}
		0 \leq \delta_{0,i},  \quad \delta_0:=(\delta_{0,1},\delta_{0,2}) \neq (0,0). 
	\end{equation}
Since 
$u_{n,i}(x)-u_{0,i}(x) - w_{0,i}(x-y_n) \to 0$ in $L^q_{\rm loc}(\RN)$ 
for $2 \leq q < 2N/(N-2)_+$ and 
	\[
		E(u_n-u_0-w_0(\cdot - y_n)) - E_\infty(u_n-u_0-w_0(\cdot - y_n)) 
		= \sum_{i=1}^2  \int_{\RN} \frac{V_{i}(x)}{2} |u_{n,i} (x) - u_{0,i}(x) - w_{0,i}(x-y_n) |^2 \rd x,
	\]
one observes that 
	\begin{equation}\label{eq:3.29}
		\int_{\RN} V_{i}(x) |u_{n,i} (x) - u_{0,i}(x) - w_{0,i}(x-y_n) |^2 \rd x 
		\to 0.
	\end{equation}
Moreover, from the fact that $E_\infty$ is uniformly continuous 
on each bounded set and $|y_n| \to \infty$, it follows that 
	\begin{equation}\label{eq:3.30}
		\liminf_{n\to\infty} E(u_n-u_0-w_0(\cdot - y_n)) \geq e_\infty(\delta_0), \quad 
		\liminf_{n\to\infty} E(w_0(\cdot - y_n)) \geq e_\infty(\gamma_{w_0}).
	\end{equation}
Thus \eqref{eq:3.27}, \eqref{eq:3.29} and \eqref{eq:3.30} yield
	\begin{equation}\label{eq:3.31}
		e(\alpha) \geq e(\gamma_{u_0}) + e_\infty(\gamma_{w_0}) + 
		e_\infty(\delta_0). 
	\end{equation}

	By noting that for $e_\infty(\alpha)$, the compactness of all minimizing sequences up to 
translations is equivalent to the strict subadditivity condition
	\[
		e_\infty (\alpha) < e_\infty(\gamma) + e_\infty(\alpha - \gamma) 
		\quad {\rm for\ all} \ \gamma \in [0,\alpha_1] \times [0,\alpha_2] \ 
		{\rm with} \ \gamma \neq (0,0), \alpha.
	\]
Therefore, by the result of \cite{GJ-16} with \eqref{eq:3.24} and \eqref{eq:3.28}, we have 
	\[
		e_\infty( \gamma_{w_0})  + e_\infty(\delta_0) 
		> e_\infty(\delta_0 + \gamma_{w_0})
		= e_\infty(\alpha-\gamma_{u_0}).
	\]
It follows from \eqref{eq:3.31} that $e(\alpha) > e(\gamma_{u_0}) + e_\infty(\alpha - \gamma_{u_0})$, 
however this contradicts Lemma \ref{lemma:2.1}(iii) and \eqref{eq:3.26} holds.

	By \eqref{eq:3.26}, one sees $\| u_{n,i} - u_{0,i} - w_{0,i}(\cdot - y_n) \|_{L^q} \to 0$ 
for each $2 \leq q < 2N/(N-2)_+$. Note also that \eqref{eq:3.29} holds in this case. 
Therefore, as in the proof of Step 2, we get 
	\begin{equation}\label{eq:3.32}
		\begin{aligned}
			\ea &=  E(u_n) + o(1)
			\\
			&=  E(u_0) + E(w_0(\cdot - y_n)) +  \frac{1}{2} \sum_{i=1}^{2} 
			\left\|  \nabla u_{n,i} - \nabla u_{0,i} - \nabla w_{0,i}(\cdot - y_n) \right\|_{L^2}^2
			\\
			& \quad
			+  \frac{1}{2} \sum_{i=1}^2
			\int_{\RN} V_i(x) | u_{n,i}(x) - u_{0,i}(x) - w_{0,i} (x-y_n) |^2 \rd x
			+ o(1)
			\\
			&\geq  e(\gamma_{u_0}) + e_\infty ( \gamma_{w_0} ) 
			+ \frac{1}{2} \sum_{i=1}^2 
						\| \nabla u_{n,i} - \nabla u_{0,i} - \nabla w_{0,i}(\cdot - y_n) \|_{L^2}^2
			+ o(1) 
		\end{aligned}
	\end{equation}
which implies 
	\[
		\ea \geq e(\gamma_{u_0}) + e_\infty(\gamma_{w_0}). 
	\]
Moreover, it follows from \eqref{eq:3.32} and Lemma \ref{lemma:2.1}(iii) that 
	\[
		\begin{aligned}
			& e(\alpha) = e (\gamma_{u_0}) + e ( \gamma_{w_0}), \quad 
			\| \nabla u_{n,i} - \nabla u_{0,i} - \nabla w_{0,i}(\cdot - y_n) \|_{L^2} \to 0 
			\ (i = 1,2),
			\\
			&
			E(u_0) = e(\gamma_{u_0}), \quad 
			E_\infty(w_0) = e_\infty ( \gamma_{w_0} ) .
		\end{aligned}
	\]
Hence, by combining this with \eqref{eq:3.26}, Lemma \ref{lemma:3.6} holds. 
\end{proof}

	To proceed, we recall \eqref{eq:3.22}, $\lambda_{n,i} \to \lambda_{0,i}$, and 
$u_n \rightharpoonup u_0$ and $u_n(\cdot + y_n) \rightharpoonup w_0$ 
weakly in $H$. 
Hence, $u_0$ (resp. $w_0$) is a nonnegative solution of \eqref{eq:3.1} 
(resp. \eqref{eq:3.3}). 
Our next task is to show $0< \lambda_{0,i}$ for $i=1,2$:

	\begin{lemma}\label{lemma:3.7}
		Suppose \textup{(p2)} in addition to \textup{(V1)} and \textup{(p1)}. 
		Then $\lambda_{0,i} > 0$ $(i=1,2)$ hold. 
	\end{lemma}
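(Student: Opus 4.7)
The strategy is to pass to the limit in the approximate Euler--Lagrange system satisfied by $(u_n)$ in order to recover exact equations for the weak limits $u_0$ and $w_0$, and then combine Pohozaev's identity for the problem at infinity with Nehari-type testings and the sign $e_\infty(\gamma_{w_0})<0$ to extract positivity of the Lagrange multipliers. First I would pass to the limit in $E'(u_n)+\lambda_{n,1}Q_1'(u_n)+\lambda_{n,2}Q_2'(u_n)\to 0$ tested against the profiles $u_0$ and $w_0(\cdot-y_n)$. Exploiting the strong decomposition $u_n=u_0+w_0(\cdot-y_n)+o_H(1)$ from Lemma \ref{lemma:3.6}, $V_i\to 0$ at infinity, and Br\'ezis--Lieb splittings, one obtains that $u_0$ weakly solves \eqref{eq:3.1} and $w_0$ weakly solves \eqref{eq:3.3} with the common multipliers $\lambda_{0,1},\lambda_{0,2}$. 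Multiplying by the components yields the Nehari identities
\[
\|\nabla w_{0,i}\|_{L^2}^2+\lambda_{0,i}\gamma_{w_0,i}=\mu_i\|w_{0,i}\|_{L^{2p_i+2}}^{2p_i+2}+\beta\int_{\RN}|w_{0,1}|^{p_3+1}|w_{0,2}|^{p_3+1}\rd x\quad(i=1,2),
\]
with the analogous identities for $u_0$ carrying an extra $\int V_i|u_{0,i}|^2\rd x$ term.

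The heart of the argument lies with $w_0$, for which the limit problem carries no potential. Under hypothesis \textup{(p2)}, all nonlinear terms are in the correct function spaces, so the $L^2$-preserving scaling $w_0^\sigma(x)=\sigma^{N/2}w_0(\sigma x)$ is differentiable and the Pohozaev identity is rigorously available. Combining Pohozaev with the summed Nehari identity and $E_\infty(w_0)=e_\infty(\gamma_{w_0})$ gives the compact formula
\[
\sum_{i=1}^{2}\lambda_{0,i}\gamma_{w_0,i}=\frac{2}{N}\sum_{i=1}^{2}\|\nabla w_{0,i}\|_{L^2}^2-2\,e_\infty(\gamma_{w_0}).
\]
Since $\gamma_{w_0}\neq(0,0)$ (as $\gamma_{u_0}\neq\alpha$), the analogue of Lemma \ref{lemma:2.1}(ii) at infinity together with \cite{GJ-16} yields $e_\infty(\gamma_{w_0})<0$, so the right-hand side is strictly positive. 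In particular $\sum_i\lambda_{0,i}\gamma_{w_0,i}>0$.

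To separate this weighted sum into individual positivity, I would use the asymmetric scaling $(w_{0,1}^\sigma,w_{0,2})\in M(\gamma_{w_0})$ with $w_{0,1}^\sigma=\sigma^{N/2}w_{0,1}(\sigma\cdot)$. Minimality forces $\frac{d}{d\sigma}E_\infty(w_{0,1}^\sigma,w_{0,2})|_{\sigma=1}=0$; substituting the scalar Pohozaev identity applied to the $w_{0,2}$-equation for the cross-term derivative, and using the Nehari identities to eliminate $\mu_i\|w_{0,i}\|^{2p_i+2}$, yields a relation linear in $(\lambda_{0,1}\gamma_{w_0,1},\lambda_{0,2}\gamma_{w_0,2})$ with strictly positive coefficients and strictly positive right-hand side (positivity follows from \textup{(p1)}, \textup{(p2)} and the consequence $p_3<2p_j+1$ of $p_3<2/N$). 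The symmetric dilation of the second component produces a companion relation, and the resulting $2\times 2$ system, with diagonally dominant matrix $\bigl(\begin{smallmatrix}1 & p_2/(p_2+1)\\ p_1/(p_1+1) & 1\end{smallmatrix}\bigr)$ of positive determinant $(p_1+p_2+1)/((p_1+1)(p_2+1))$, is invertible by Cramer's rule and gives $\lambda_{0,1}\gamma_{w_0,1}>0$, $\lambda_{0,2}\gamma_{w_0,2}>0$ whenever both components of $w_0$ are nontrivial. In the degenerate case $\gamma_{w_0,i}=0$ one has $\gamma_{u_0,i}=\alpha_i>0$; the multiplier $\lambda_{0,i}$ is then pinned down by the $u_0$-equation, and, since $V_i$ is only continuous, a limiting/translation argument exploiting $V_i\le 0$ and $V_i(x)\to 0$ is used to reduce the analysis back to the Pohozaev identity at infinity.

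The main obstacle is disentangling $\lambda_{0,1}$ from $\lambda_{0,2}$: the symmetric Pohozaev identity only controls the weighted sum, so the asymmetric one-component scalings, combined with the precise algebraic structure encoded in \textup{(p1)},\textup{(p2)} (which ensure both the validity of the scaling differentiation and the positivity of all coefficients in the resulting linear system), are essential. A secondary difficulty is the $u_0$-branch of Step 4, where the non-$C^1$ nature of $V$ precludes a direct Pohozaev identity and forces the detour through translated competitors.
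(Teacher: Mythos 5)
Your proposal diverges completely from the paper's argument, and it contains gaps that I do not think can be repaired as written. The paper's proof is a short Liouville-type argument: if, say, $u_{0,1}\not\equiv 0$ and $\lambda_{0,1}\le 0$, then since $V_1\le 0$, $\beta>0$ and all components are nonnegative, the first equation of \eqref{eq:3.1} gives the differential inequality $-\Delta u_{0,1}\ge \mu_1 u_{0,1}^{2p_1+1}$ in $\RN$, which admits no positive solution because $2p_1+1\le N/(N-2)$ — and this is exactly where (p2) enters, to guarantee the subcriticality of the exponent for the Liouville theorem when $N\ge 5$ (the cases $N=1,2$ are handled separately). Since $\gamma_{u_0,i}+\gamma_{w_0,i}=\alpha_i>0$ forces one of $u_{0,i}$, $w_{0,i}$ to be nontrivial for each $i$, the same argument applied to whichever profile is nonzero settles both multipliers. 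No Pohozaev identity, no Nehari system, and no scaling is needed.

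The concrete problems with your route are the following. First, your final step is a non sequitur: a $2\times 2$ system $Ax=b$ with $A=\bigl(\begin{smallmatrix}1 & a\\ b & 1\end{smallmatrix}\bigr)$, $a,b>0$, positive determinant and $b>0$ componentwise does \emph{not} imply $x>0$ componentwise, because $A^{-1}$ has negative off-diagonal entries; Cramer's rule gives $x_1=(b_1-ab_2)/\det A$, which can be negative. So even granting the (unverified) derivation of your linear system, the conclusion $\lambda_{0,i}\gamma_{w_0,i}>0$ does not follow. Second, the degenerate case is not optional: nothing in Lemma \ref{lemma:3.6} prevents, say, $w_{0,2}\equiv 0$ with $\gamma_{u_0,2}=\alpha_2$, and then $\lambda_{0,2}$ must be extracted from the $u_0$-equation, which carries the merely continuous potential $V_2$; your ``limiting/translation argument'' is not specified, and it cannot proceed via Pohozaev (no $\nabla V$ available) nor by translating the fixed profile $u_0$ to infinity. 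Third, your stated use of (p2) (``validity of the scaling differentiation and positivity of coefficients'') is not what that hypothesis provides; the Pohozaev and Nehari identities for the limit system hold for all subcritical powers under (p1) alone, so your argument never actually uses (p2) where it is needed, which is a strong indication that the positivity cannot be reached along this path. What is correct in your proposal is the weighted identity $\sum_i\lambda_{0,i}\gamma_{w_0,i}=\frac{2}{N}\sum_i\|\nabla w_{0,i}\|_{L^2}^2-2e_\infty(\gamma_{w_0})>0$, but this only controls a sum and cannot by itself rule out one multiplier being nonpositive.
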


\begin{proof}
We remark that by Lemma \ref{lemma:3.6} and $\alpha_i > 0$ ($i=1,2)$, 
for each $i=1,2$, 
either $u_{0,i} \not\equiv 0$ or $w_{0,i} \not\equiv 0$. 
In what follows, we only show that $\lambda_{0,1} > 0$ holds when $u_{0,1} \not\equiv 0$, 
but other cases can be treated similarly.

	Assume $u_{0,1} \not \equiv 0$. Then $u_{0,1} > 0$ in $\RN$ 
by the strong maximum principle. 
If $\lambda_{0,1} \leq 0$, then by $u_{0,i} \geq 0$, $V_i(x) \leq 0$ and $\beta >0$, 
it follows from \eqref{eq:3.1} that 
	\[
		-\Delta u_{0,1} \geq \mu_1 u_{0,1}^{2p_1 + 1} \quad {\rm in} \ \RN.
	\]
However, this inequality does not have any positive solution for 
$2p_1 +1 \leq N/(N-2)$ when $N \geq 3$. 
See, for instance, \cite[Theorem 8.4]{QS-07}. 
When $N=1$, it is easy to see that 
the inequality admits no positive solution. If $N=2$, since $u_{0,1}$ is bounded 
by elliptic regularity and $u_{0,1}(x) \to 0$ as $|x| \to \infty$, 
\cite[Theorem 29]{PW-84} yields $u_{0,i} \equiv 0$. 
Therefore, we have a contradiction and $\lambda_{0,1} > 0$ holds 
provided $u_{0,1}\not \equiv 0$. 
\end{proof}

	From Lemma \ref{lemma:3.7}, 
without loss of generality, we may assume \eqref{eq:3.2}: $0 < \lambda_{0,1} \leq \lambda_{0,2}$. 
Thus, we note that Lemma \ref{lemma:3.1} is applicable to $u_0$ and $w_0$.

	Now under (V1) and (p1)--(p2), 
we derive a contradiction and complete the proof of Theorem \ref{theorem:1.1}:

	\begin{proof}[Proof of Theorem \ref{theorem:1.1}]
By Lemma \ref{lemma:3.6}, we have 
	\begin{equation}\label{eq:3.33}
		\begin{aligned}
			&\ea = e(\gamma_{u_0}) + e_\infty(\gamma_{w_0}), \quad 
			E(u_0) = e(\gamma_{u_0}), \quad 
			E_\infty(w_0) = e_\infty (\gamma_{w_0}), \quad 
			w_0 \neq (0,0),
			\\
			&\text{$u_0$ (resp. $w_0$) is a nonnegative solution of \eqref{eq:3.1} (resp. \eqref{eq:3.3}) 
			with \eqref{eq:3.2}}. 
		\end{aligned}
	\end{equation}
In what follows, we will find a $u \in M(\alpha)$ such that 
	\begin{equation}\label{eq:3.34}
		E(u) < e(\gamma_{u_0}) + e_{\infty} (\gamma_{w_0}),
	\end{equation}
which leads a contradiction and we conclude that $(u_n)$ has a strongly convergent 
subsequence in $H$.

	To show \eqref{eq:3.34}, set 
	\[
		w_{n,i}(x) := w_{0,i}(x - n \mathbf{e}_1 ), \quad 
		\tau_{n,i} := \frac{\sqrt{\alpha_i}}{\| u_{0,i} + w_{n,i} \|_{L^2} }, \quad 
		\kappa_{n,i} := \la u_{0,i} , w_{n,i} \ra_{L^2}. 
	\]
Remark that 
	\[
		\left( \tau_{n,1} [ u_{0,1} + w_{n,1} ], \ 
		\tau_{n,2} [ u_{0,2} + w_{n,2} ] \right) \in M(\alpha), \quad 
		0 \leq \kappa_{n,i} \to 0. 
	\]
Our goal is to prove 
	\begin{equation}\label{eq:3.35}
	E( \tau_{n,1} [ u_{0,1} + w_{n,1} ], \ 
	\tau_{n,2} [ u_{0,2} + w_{n,2} ] ) 
	< e(\gamma_{u_0}) + e_{\infty} (\gamma_{w_0})
	\end{equation}
	for sufficiently large $n$. 
We divide our proof into several steps. 

\medskip

\noindent
{\bf Step 1:} {\sl In the following cases, \eqref{eq:3.35} holds: 
	(i) $u_0 = (0,0)$, (ii) $u_0=(u_{0,1},0)$ and $w_0=(0,w_{0,2})$, 
	(iii) $u_0=(0,u_{0,2})$ and $w_0=(w_{0,1},0)$. }

\medskip

	In each case, we have $\kappa_{n,1} = \kappa_{n,2} = 0$ and $\tau_{n,1} = 1 = \tau_{n,2}$. 
When $u_0=(0,0)$, notice that $\ea = e_\infty(\gamma_{w_0})$ 
thanks to \eqref{eq:3.33} and that 
	\[
		\begin{aligned}
			E(w_{n,1},w_{n,2}) 
			&= E_\infty(w_{n,1}, w_{n,2}) 
			+ \frac{1}{2} \int_{\RN} V_1(x) w_{n,1}^2 + V_2(x) w_{n,2}^2 \rd x
			\\
			&= \ea + \frac{1}{2} \int_{\RN} V_1(x) w_{n,1}^2 + V_2(x) w_{n,2}^2 \rd x.
		\end{aligned}
	\]
Since $V_j(x) \leq 0$ ($j=1,2$), $w_j>0$ ($j=1,2$) and 
either $V_1(x) \not\equiv 0$ or $V_2(x) \not \equiv 0$, 
we get $E(w_{n,1},w_{n,2}) < \ea $ and \eqref{eq:3.35} holds.

	In the case $u_0=(u_{0,1},0)$ and $w_0 = (0,w_{0,2})$, 
it follows from $u_{0,1},w_{0,2} > 0$ and $\beta > 0$ that 
	\[
		E( u_{0,1} , w_{n,2} ) 
		\leq E(u_{0,1}, 0) + E_\infty(0,w_{n,2}) 
		- \frac{\beta}{p_3+1} \int_{\RN} u_{0,1}^{p_3+1} w_{n,2}^{p_3+1} \rd x 
		< e(\gamma_{u_0}) + e_\infty ( \gamma_{w_0} ) = \ea.
	\]
Hence, \eqref{eq:3.35} holds. 
The case $u_0=(0,u_{0,2})$ and $w_0=(w_{0,1},0)$ can be treated similarly. 
Thus Step 1 holds.

\medskip

From Step 1, we may suppose either $u_{0,1}, w_{0,1} > 0$ or 
$u_{0,2}, w_{0,2} > 0$, hence we may assume that 
	\begin{equation}\label{eq:3.36}
		\text{either $\kappa_{n,1} > 0$ or $0=\kappa_{n,1} < \kappa_{n,2}$}. 
	\end{equation}

\medskip

\noindent
{\bf Step 2:} 
{\sl (i) If $\kappa_{n,1} > 0$, then for each $\lambda < \lambda_{0,1} $, 
there exist $c>0$, which is independent of $\lambda$, and 
$0 < C_{\lambda}$ such that 
	\begin{equation}\label{eq:3.37}
		c \left( 1 + n \right)^{-(N-1)/2} \exp \left( - \sqrt{\lambda_{0,1}} n \right)
		 \leq \kappa_{n,1} 
		\leq C_{\lambda} \exp \left( - \sqrt{\lambda} n \right) 
		\quad {\rm for\ all} \ n \geq 1. 
	\end{equation}
Moreover, if $\lambda_{0,1} = \lambda_{0,2}$ and $\kappa_{n,2} > 0$, 
then $\kappa_{n,2}$ also satisfies \eqref{eq:3.37}. 
If $\lambda_{0,1} < \lambda_{0,2}$, then 
there exist $\theta_1 > 1$ and $C_0>0$ such that 
	\begin{equation}\label{eq:3.38}
		\kappa_{n,2} \leq C_0 \kappa_{n,1}^{\theta_1} \quad 
		{\rm for\ all} \ n \geq 1.
	\end{equation}

\noindent
(ii) Suppose $\kappa_{n,1}=0$ and $\kappa_{n,2}>0$. 
When either $p_3 \geq 1$ or else 
$0 < p_3 < 1$ and $ \lambda_{0,3} = (1+p_3)^2 \lambda_{0,1} / (1-p_3)^2 > \lambda_{0,2}$, 
for every $\lambda < \lambda_{0,2}$, 
there exist $c > 0$, which is independent of $\lambda$, and 
$0< C_{\lambda}$ such that 
	\begin{equation}\label{eq:3.39}
		\begin{aligned}
			c (1+n)^{-(N-1)/2} \exp \left( - \sqrt{\lambda_{0,2}} n \right)
			\leq \kappa_{n,2} \leq 
			C_{\lambda} \exp \left( - \sqrt{\lambda} n \right)
			\quad {\rm for\ all} \ n \geq 1. 
		\end{aligned}
	\end{equation}
When $0 < p_3 < 1$ and $ \lambda_{0,3}  \leq \lambda_{0,2}$, 
for every $\lambda_1 < \lambda_{0,3} < \lambda_2$ 
there exist $C_{\lambda_1}, C_{\lambda_2}>0$ such that 
	\begin{equation}\label{eq:3.40}
		\begin{aligned}
			C_{\lambda_2} \exp \left( - \sqrt{\lambda_2} n \right) 
			\leq \kappa_{n,2} \leq 
			C_{\lambda_1} \exp \left( - \sqrt{\lambda_1} n \right)
			\quad {\rm for\ all} \ n \geq 1. 
		\end{aligned}
	\end{equation}
}

\medskip

(i) By $\kappa_{n,1} > 0$, one has $u_{0,1}, w_{0,1} > 0$. 
From Lemma \ref{lemma:3.1} (i), for any $\lambda < \lambda_{0,1}$, 
it follows that 
	\[
		\begin{aligned}
			&u_{0,1}(x)  \exp \left( \sqrt{\lambda} |x| \right) 
			+ w_{0,1} (x) \exp \left( \sqrt{\frac{\lambda+\lambda_{0,1}}{2}} |x| \right)
			\to 0 \quad {\rm as} \ |x| \to \infty,
			\\
			&
			c (1+|x|)^{-(N-1)/2} \exp \left( - \sqrt{\lambda_{0,1}} |x| \right) 
			\leq u_{0,1} (x),
			\quad 
			c \exp \left( - \sqrt{\lambda_{0,1} + 1} |x| \right) \leq w_{0,1}(x).
		\end{aligned}
	\]
Thus, Lemma \ref{lemma:3.3} asserts 
	\[
		\exp \left( \sqrt{\lambda} n \right) \kappa_{n,1} \to 0
	\]
and 
	\[
		\begin{aligned}
			& (1+n)^{(N-1)/2} \exp \left( \sqrt{\lambda_{0,1}} n \right) \kappa_{n,1} 
			\\
			\geq & \, 
			c^2 (1+n)^{(N-1)/2} \exp \left( \sqrt{\lambda_{0,1}} n \right) 
			\\
			& \quad \times 
			\int_{\RN} 
			(1+|y-n\mathbf{e}_1|)^{-(N-1)/2} \exp \left( - \sqrt{\lambda_{0,1}} |y-n\mathbf{e}_1| \right) 
			\exp \left( -\sqrt{\lambda_{0,1} + 1} |y| \right)
			\rd y
			\\
			\to & \,  c^2 
			\int_{\RN} \exp \left( - \sqrt{\lambda_{0,1} + 1} |y| \right) 
			\exp \left( \sqrt{\lambda_{0,1}} \mathbf{e}_1 \cdot y \right) \rd y > 0.
		\end{aligned}
	\]
Hence, \eqref{eq:3.37} holds.

	If $\lambda_{0,1} = \lambda_{0,2}$ and $\kappa_{n,2} > 0$ hold, 
then from Lemma \ref{lemma:3.1} (ii) and (iii), 
we observe that $u_{0,2}$ satisfies the decay estimate in Lemma \ref{lemma:3.1} (i). 
Thus, we can argue as in the above and see that $\kappa_{n,2}$ satisfies \eqref{eq:3.37}.

	Finally, if $\lambda_{0,1} < \lambda_{0,2}$, by Lemma \ref{lemma:3.1} (ii) and (iii), 
one can find an $\delta_0>0$ such that 
	\[
		0 \leq u_{0,2}(x) \leq C \exp \left( -\sqrt{\lambda_{0,1} + 2\delta_0} |x| \right), \quad 
		w_{0,2}(x) \leq C \exp \left( -\sqrt{\lambda_{0,1} + 3 \delta_0} |x| \right). 
	\]
Hence, Lemma \ref{lemma:3.3} yields 
	\[
		\exp \left( \sqrt{\lambda_{0,1} + \delta_0} n \right) \kappa_{n,2} \to 0.
	\]
Thus, \eqref{eq:3.38} holds for some $\theta_1>1$ and $C_0>0$ 
due to \eqref{eq:3.37}.

	(ii) By $\kappa_{n,1} = 0$, remark that either 
$u_0=(0,u_{0,2})$ or $w_0 = (0,w_{0,2})$. 
Since the arguments for both cases are similar, 
we only deal with the case $u_0=(0,u_{0,2})$. 
From $\kappa_{n,2} > 0$ and $\alpha = \gamma_{u_0} + \gamma_{w_0}$, 
we have $w_0 = (w_{0,1},w_{0,2})$ with 
$w_{0,j} > 0$ ($j=1,2$). Moreover, by Lemma \ref{lemma:3.1} (ii), 
for every $\lambda \in (0,\lambda_{0,2})$ there exists a $C_\lambda > 0$ 
such that 
	\begin{equation}\label{eq:3.41}
		c_0 \left( 1 + |x| \right)^{-(N-1)/2} 
		\exp \left( - \sqrt{\lambda_{0,2}} |x| \right)
		\leq u_{0,2}(x) 
		\leq C_{\lambda} \exp \left( - \sqrt{\lambda} |x| \right). 
	\end{equation}

	When either $p_3 \geq 1$ or else 
$0 < p_3 < 1$ and $ \lambda_{0,3} = (1+p_3)^2 \lambda_{0,1} / (1-p_3)^2 > \lambda_{0,2}$, 
by Lemma \ref{lemma:3.1} (iii), $w_{0,2}$ satisfies 
the same estimate to \eqref{eq:3.41}:
	\[
		c_0 \left( 1 + |x| \right)^{-(N-1)/2} 
		\exp \left( - \sqrt{\lambda_{0,2}} |x| \right)
		\leq w_{0,2}(x) 
		\leq C_{\lambda} \exp \left( - \sqrt{\lambda} |x| \right)
	\]
for every $\lambda \in (0,\lambda_{0,2})$. 
Hence, as in (i), we can prove \eqref{eq:3.39}.

	Next, suppose $0<p_3<1$ and 
$\lambda_{0,3} \leq \lambda_{0,2}$. 
Lemma \ref{lemma:3.1} (iii) gives 
	\[
		C_{\lambda_2} \exp \left( - \sqrt{\lambda_2} |x| \right) 
		\leq w_{0,2}  (x) \leq 
		C_{\lambda_1} \exp \left( - \sqrt{\lambda_1} |x| \right)
	\]
for every $\lambda_1 \in(0, \lambda_{0,3})$ and $ \lambda_2 \in (\lambda_{0,3},\infty) $.  
The rest of arguments is similar to the above and 
we can prove \eqref{eq:3.40}. 

\medskip

\noindent
{\bf Step 3: } 
{\sl Estimate of $E( \tau_{n,1} [ u_{0,1} + w_{n,1} ], 
\tau_{n,2} [ u_{0,2} + w_{n,2}  ] ) $}.

\medskip

Since $\alpha_i = \| u_{0,i} \|_{L^2}^2 + \| w_{0,i} \|_{L^2}^2$, 
we have 
	\[
		\| u_{0,i} + w_{n,i} \|_{L^2}^2 = \alpha_i + 2 \kappa_{n,i}.
	\]
Hence, 
	\begin{equation}\label{eq:3.42}
		\tau_{n,i}^2 = \frac{\alpha_i}{\alpha_i + 2 \kappa_{n,i}} 
		= \frac{1}{1+ 2 \kappa_{n,i}/\alpha_i} 
		= 1 - \frac{2\kappa_{n,i}}{\alpha_i} + O(\kappa_{n,i}^2), \quad 
		\tau_{n,i} = 1 - \frac{\kappa_{n,i}}{\alpha_i} + O(\kappa_{n,i}^2)
	\end{equation}
and $\tau_{n,i} = 1$ if $\kappa_{n,i} = 0$.

	Recalling the notation in \eqref{eq:2.1} and noting 
	\begin{equation}\label{eq:3.43}
		\begin{aligned}
			& E( \tau_{n,1} [ u_{0,1} + w_{n,1} ], \ \tau_{n,2} [ u_{0,2} + w_{n,2} ] )
			\\
			= &\, 
			\sum_{i=1}^2 \frac{\tau_{n,i}^2}{2} 
			\left\| u_{0,i} + w_{n,i} \right\|_{V_i}^2 
			- \int_{\RN} \sum_{i=1}^2 \frac{\mu_i \tau_{n,i}^{2p_i+2}}{2p_i+2} 
			(u_{0,i} + w_{n,i})^{2p_i+2} \rd x 
			\\
			& \quad 
			- \int_{\RN} \frac{\beta \tau_{n,1}^{p_3+1} \tau_{n,2}^{p_3+1} }{p_3+1} 
			(u_{0,1} + w_{n,1})^{p_3+1} (u_{0,2}+w_{n,2})^{p_3+1} \rd x,
		\end{aligned}
	\end{equation}
we compute each term.

	First, it follows from \eqref{eq:3.42} and $\la u_{0,i} , w_{n,i} \ra_{V_i} \to 0$ 
as $n \to \infty$ that 
	\begin{equation}\label{eq:3.44}
		\begin{aligned}
			&\frac{\tau_{n,i}^2}{2} \| u_{0,i} + w_{n,i} \|_{V_i}^2 
			\\
			= &\, \frac{1}{2} \left( 1 - \frac{2\kappa_{n,i}}{\alpha_i} + O(\kappa_{n,i}^2) \right) 
			\left( \| u_{0,i} \|_{V_i}^2 + \| w_{n,i} \|_{V_i}^2 + 2 \la u_{0,i} , w_{n,i} \ra_{V_i} \right)
			\\
			= &\, \frac{1}{2} 
			\left( \| u_{0,i} \|_{V_i}^2 + \| w_{n,i} \|_{V_i}^2 + 2 \la u_{0,i} , w_{n,i} \ra_{V_i} \right)
			- \frac{\kappa_{n,i}}{\alpha_i} 
			\left( \| u_{0,i} \|_{V_i}^2 + \| w_{n,i} \|_{V_i}^2 \right)
			\\
			& \, 
			+ O(\kappa_{n,i}^2) + O \left( \kappa_{n,i} \la u_{0,i}, w_{n,i} \ra_{V_i} \right)
			\\
			= & \, \frac{1}{2} 
			\left( \| u_{0,i} \|_{V_i}^2 + \| \nabla w_{0,i} \|_{L^2}^2 
			+ 2 \la u_{0,i} , w_{n,i} \ra_{V_i} \right)
			- \frac{\kappa_{n,i}}{\alpha_i} 
			\left( \| u_{0,i} \|_{V_i}^2 + \| \nabla w_{0,i} \|_{L^2}^2 \right)
			\\
			& \,  
			+ \left( \frac{1}{2} - \frac{\kappa_{n,i}}{\alpha_i} \right) \int_{\RN} V_i(x) w_{n,i}^2 \rd x
			+ O(\kappa_{n,i}^2) + O \left( \kappa_{n,i} \la u_{0,i}, w_{n,i} \ra_{V_i} \right).
		\end{aligned}
	\end{equation}
Since $u_0$ is a solution of \eqref{eq:3.1}, one sees 
	\begin{equation}\label{eq:3.45}
		\begin{aligned}
			\la u_{0,1} , w_{n,1} \ra_{V_1} 
			&= - \lambda_{0,1} \kappa_{n,1} 
			+ \int_{\RN} \mu_1 u_{0,1}^{2p_1+1} w_{n,1} + 
			\beta u_{0,1}^{p_3} u_{0,2}^{p_3+1} w_{n,1} \rd x,
			\\
			\la u_{0,2} , w_{n,2} \ra_{V_2} 
			&= - \lambda_{0,2} \kappa_{n,2} 
			+ \int_{\RN} \mu_2 u_{0,2}^{2p_2+1} w_{n,2} + 
			\beta u_{0,1}^{p_3+1} u_{0,2}^{p_3} w_{n,2} \rd x.
		\end{aligned}
	\end{equation}
By Lemma \ref{lemma:3.1}, we notice that 
	\[
		u_{0,1}^{2p_1+1}(x) + u_{0,1}^{p_3}(x) u_{0,2}^{p_3+1} (x)
		+ \mu_2 u_{0,2}^{2p_2+1} (x) + \beta u_{0,1}^{p_3+1} (x)
		u_{0,2}^{p_3} (x)
		\leq C \exp \left( - \sqrt{\lambda_{0,1}} |x| \right).
	\]
From Lemma \ref{lemma:3.3} it follows that 
	\[
		\int_{\RN} \mu_1 u_{0,1}^{2p_1+1} w_{n,1} + 
		\beta u_{0,1}^{p_3} u_{0,2}^{p_3+1} w_{n,1} 
		+ \mu_2 u_{0,2}^{2p_2+1} w_{n,2} + 
		\beta u_{0,1}^{p_3+1} u_{0,2}^{p_3} w_{n,2} \rd x 
		\leq C_0 \exp \left( - \sqrt{\frac{\lambda_{0,1}}{2}} n  \right).
	\]
Noting Step 2, we may find a $\theta_2 \in (1,2)$ such that 
	\begin{equation}\label{eq:3.46}
		\kappa_{n,i} \la u_{0,i} , w_{n,i} \ra_{V_i} = o( \kappa_{n,i}^{\theta_2} ) 
		\quad {\rm for} \ i=1,2. 
	\end{equation}
Since $\kappa_{n,i} \to 0$, 
we may assume $1/2 - \kappa_{n,i} / \alpha_i \geq 0$ and 
it follows from $V_i(x) \leq 0$,  \eqref{eq:3.44}--\eqref{eq:3.46} and $1 < \theta_2 < 2$ that 
	\begin{equation}\label{eq:3.47}
		\begin{aligned}
			&\frac{\tau_{n,1}^2}{2} \| u_{0,1} + w_{n,1} \|_{V_1}^2 
			\\
			\leq & \, 
			\frac{1}{2} \left( \| u_{0,1} \|_{V_1}^2 + \| \nabla w_{0,1} \|_{L^2}^2 \right) 
			- \lambda_{0,1} \kappa_{n,1} + 
			\int_{\RN} \mu_1 u_{0,1}^{2p_1+1} w_{n,1} + 
			\beta u_{0,1}^{p_3} u_{0,2}^{p_3+1} w_{n,1} \rd x
			\\
			&
			- \frac{\kappa_{n,1}}{\alpha_1} 
			\left( \| u_{0,1} \|_{V_1}^2 + \| \nabla w_{0,1} \|_{L^2}^2 \right) 
			+ o(\kappa_{n,1}^{\theta_2}) 
		\end{aligned}
	\end{equation}
and 
	\begin{equation}\label{eq:3.48}
		\begin{aligned}
			&\frac{\tau_{n,2}^2}{2} \| u_{0,2} + w_{n,2} \|_{V_2}^2 
			\\
			\leq & \, 
			\frac{1}{2} \left( \| u_{0,2} \|_{V_2}^2 + \| \nabla w_{0,2} \|_{L^2}^2 \right) 
			- \lambda_{0,2} \kappa_{n,2} + 
			\int_{\RN} \mu_2 u_{0,2}^{2p_2+1} w_{n,2} + 
			\beta u_{0,1}^{p_3+1} u_{0,2}^{p_3} w_{n,2} \rd x
			\\
			&
			- \frac{\kappa_{n,2}}{\alpha_2} 
			\left( \| u_{0,2} \|_{V_2}^2 + \| \nabla w_{0,2} \|_{L^2}^2 \right) 
			+ o(\kappa_{n,2}^{\theta_2}) .
		\end{aligned}
	\end{equation}

	Next, it follows from \eqref{eq:3.42} and Lemma \ref{lemma:3.4} that 
	\[
		\begin{aligned}
			& - \frac{\mu_1 \tau_{n,1}^{2p_1+2} }{2p_1+2} 
			\int_{\RN} ( u_{0,1} + w_{n,1} )^{2p_1+2} \rd x 
			\\
			\leq & \, 
			 - \frac{\mu_1  }{2p_1+2} \left( 1 - \frac{2p_1+2}{\alpha_1} \kappa_{n,1} \right)
			 \\
			 & \, \times 
			 \int_{\RN} u_{0,1}^{2p_1+2} + w_{n,1}^{2p_1+2} 
			 + (2p_1+2) \left( u_{0,1}^{2p_1+1} w_{n,1} + u_{0,1} w_{n,1}^{2p_1+1}  \right) 
			  - C_{p_1} u_{0,1}^{p_1+1} w_{n,1}^{p_1+1} \rd x + O(\kappa_{n,1}^2).
		\end{aligned}
	\]
Thanks to $1 < p_1 + 1$ and Lemma \ref{lemma:3.1}, 
we may find an $\eta_1>0$ such that 
	\[
		u_{0,1}^{p_1+1} (x) + w_{0,1}^{p_1+1} (x) \leq 
		C \exp  \left( - \sqrt{\lambda_{0,1} + \eta_1} |x|  \right).
	\]
Therefore, exploiting Lemma \ref{lemma:3.3}, 
we can find a $\theta_3 = \theta_3 (p_1) \in (1,2)$ such that 
	\[
		\kappa_{n,1} \int_{\RN} u_{0,1}^{2p_1+1} w_{n,1} + u_{0,1} w_{n,1}^{2p_1+1} \rd x
		+ 
		\int_{\RN} u_{0,1}^{p_1+1} w_{n,1}^{p_1+1} \rd x 
		= o( \kappa_{n,1}^{\theta_3} ).
	\]
Hence, 
	\begin{equation}\label{eq:3.49}
		\begin{aligned}
			& - \frac{\mu_1 \tau_{n,1}^{2p_1+2} }{2p_1+2} 
			\int_{\RN} ( u_{0,1} + w_{n,1} )^{2p_1+2} \rd x 
			\\
			\leq & \, 
			- \frac{\mu_1}{2p_1+2} \int_{\RN} u_{0,1}^{2p_1+2} + w_{0,1}^{2p_1+2} \rd x 
			- \mu_1 \int_{\RN} 
			u_{0,1}^{2p_1+1} w_{n,1} + u_{0,1} w_{n,1}^{2p_1+1} 
			\rd x
			 \\
			 & \, 
			 + \frac{\kappa_{n,1}}{\alpha_1} \mu_1 
			 \int_{\RN} u_{0,1}^{2p_1+2} + w_{0,1}^{2p_1 + 2} \rd x 
			 + o(\kappa_{n,1}^{\theta_3})
			 .
		\end{aligned}
	\end{equation}
Similarly, we have 
	\begin{equation}\label{eq:3.50}
		\begin{aligned}
			& - \frac{\mu_2 \tau_{n,2}^{2p_2+2} }{2p_2+2} 
			\int_{\RN} ( u_{0,2} + w_{n,2} )^{2p_2+2} \rd x 
			\\
			\leq & \, 
			- \frac{\mu_2}{2p_2+2} \int_{\RN} u_{0,2}^{2p_2+2} + w_{0,2}^{2p_2+2} \rd x 
			- \mu_2 \int_{\RN} 
			u_{0,2}^{2p_2+1} w_{n,2} + u_{0,2} w_{n,2}^{2p_2+1} 
			\rd x
			 \\
			 & \, 
			 + \frac{\kappa_{n,2}}{\alpha_2} \mu_2 
			 \int_{\RN} u_{0,2}^{2p_2+2} + w_{0,2}^{2p_2 + 2} \rd x 
			 + o(\kappa_{n,2}^{\theta_4})
		\end{aligned}
	\end{equation}
for some $\theta_4 = \theta_4(p_2) \in (1,2)$.

	On the other hand, choosing $\eta = p_3/2$ in Lemma \ref{lemma:3.4} (ii), 
it follows from \eqref{eq:3.42} that 
	\[
		\begin{aligned}
			& - \frac{\beta \tau_{n,1}^{p_3+1} \tau_{n,2}^{p_3+1} }{p_3+1} 
			\int_{\RN} ( u_{0,1} + w_{n,1} )^{p_3+1} 
			 ( u_{0,2} + w_{n,2} )^{p_3+1} \rd x 
			\\
			\leq & \, 
			- \frac{\beta}{p_3+1} \left( 1 - \frac{p_3+1}{\alpha_1} \kappa_{n,1} \right) 
			\left( 1 - \frac{p_3+1}{\alpha_2} \kappa_{n,2} \right)
			 \\
			 & \, \times 
			 \int_{\RN} 
			 u_{0,1}^{p_3+1} u_{0,2}^{p_3+1} + w_{0,1}^{p_3+1} w_{0,2}^{p_3+1} 
			 +(p_3+1) \left( u_{0,1}^{p_3} u_{0,2}^{p_3+1} w_{n,1} 
			 + u_{0,1}^{p_3+1} u_{0,2}^{p_3} w_{n,2} 
			 + u_{0,2} w_{n,1}^{p_3+1} w_{n,2}^{p_3}
			  \right) \rd x 
			 \\
			 & \, + 
			 \frac{\beta}{p_3+1} \left( 1 - \frac{p_3+1}{\alpha_1} \kappa_{n,1} \right)
			\left( 1 - \frac{p_3+1}{\alpha_2} \kappa_{n,2} \right) C_{p_3} 
			 \int_{\RN} 
			 u_{0,1}^{p_3/2} u_{0,2}^{p_3+1} w_{n,1}^{1+p_3/2} 
			 + u_{0,1}^{1+p_3/2} w_{n,1}^{p_3/2} w_{n,2}^{p_3+1}
			 \rd x 
			 \\
			 & \, 
			 + O(\kappa_{n,1}^2 + \kappa_{n,2}^2).
		\end{aligned}
	\]
As in the above, if $\kappa_{n,1} >0$, then by Lemma \ref{lemma:3.1} and Step 2, 
we may find a $\theta_5 \in (1,2)$ such that 
	\[
		\begin{aligned}
			o ( \kappa_{n,1}^{\theta_5}  )
			&= \sum_{i=1}^{2} \kappa_{n,i} \int_{\RN} 
			u_{0,1}^{p_3} u_{0,2}^{p_3+1} w_{n,1} + u_{0,1}^{p_3+1} u_{0,2}^{p_3} w_{n,2} 
			+ u_{0,2} w_{n,1}^{p_3+1} w_{n,2}^{p_3} \rd x
			\\
			& \quad + 
			 \int_{\RN} u_{0,1}^{p_3/2} u_{0,2}^{p_3+1} w_{n,1}^{1+p_3/2} 
			  + u_{0,1}^{1+p_3/2} w_{n,1}^{p_3/2} w_{n,2}^{p_3+1}
			 \rd x.
		\end{aligned}
	\]
On the other hand, if $\kappa_{n,1} = 0 < \kappa_{n,2}$, then 
$u_{0,1}^{p_3/2} u_{0,2}^{p_3+1} w_{n,1}^{1+p_3/2} 
\equiv 0 \equiv u_{0,1}^{1+p_3/2} w_{n,1}^{p_3/2} w_{n,2}^{p_3+1}$. 
Therefore, if $\kappa_{n,1} = 0 < \kappa_{n,2}$, then for some $\theta_5 \in (1,2)$, one has 
	\[
		\begin{aligned}
			o ( \kappa_{n,2}^{\theta_5}  )
			&= \sum_{i=1}^{2} \kappa_{n,i} \int_{\RN} 
			u_{0,1}^{p_3} u_{0,2}^{p_3+1} w_{n,1} + u_{0,1}^{p_3+1} u_{0,2}^{p_3} w_{n,2} 
			+ u_{0,2} w_{n,1}^{p_3+1} w_{n,2}^{p_3} \rd x
			\\
			& \quad + 
			 \int_{\RN} u_{0,1}^{p_3/2} u_{0,2}^{p_3+1} w_{n,1}^{1+p_3/2} 
			  + u_{0,1}^{1+\eta} w_{n,1}^{p_3/2} w_{n,2}^{p_3+1}
			 \rd x .		 
		\end{aligned}
	\]
Thus, in both cases, we infer that 
	\begin{equation}\label{eq:3.51}
		\begin{aligned}
			& - \frac{\beta \tau_{n,1}^{p_3+1} \tau_{n,2}^{p_3+1} }{p_3+1} 
			\int_{\RN} ( u_{0,1} + w_{n,1} )^{p_3+1} 
						 ( u_{0,2} + w_{n,2} )^{p_3+1} \rd x 
			\\
			\leq & \, 
			- \frac{\beta}{p_3+1} \int_{\RN} u_{0,1}^{p_3+1} u_{0,2}^{p_3+1} 
			+ w_{0,1}^{p_3+1} w_{0,2}^{p_3+1} \rd x 
			\\
			& \, 
			- \beta \int_{\RN} u_{0,1}^{p_3} u_{0,2}^{p_3+1} w_{n,1} 
			+ u_{0,1}^{p_3+1} u_{0,2}^{p_3} w_{n,2} 
			+ u_{0,2} w_{n,1}^{p_3+1} w_{n,2}^{p_3}
			\rd x 
			\\
			& \, 
			+ \beta \left( \frac{\kappa_{n,1}}{\alpha_1} + \frac{\kappa_{n,2}}{\alpha_2} \right) 
			\int_{\RN} u_{0,1}^{p_3+1} u_{0,2}^{p_3+1} + w_{0,1}^{p_3+1} w_{0,2}^{p_3+1} \rd x
			+ o ( \kappa_{n,1}^{\theta_5} + \kappa_{n,2}^{\theta_5} ).
		\end{aligned}
	\end{equation}

	From \eqref{eq:3.38}, \eqref{eq:3.43} and \eqref{eq:3.47}--\eqref{eq:3.51}, 
setting $\theta_0:= \min\{\theta_j \ |\ 1 \leq j \leq 5 \} \in (1,2)$, we have  
	\[
		\begin{aligned}
			& E( \tau_{n,1} [ u_{0,1} + w_{n,1} ], \ \tau_{n,2} [ u_{0,2} + w_{n,2} ] )
			\\
			\leq & \, 
			E(u_0) + E_\infty(w_0) 
			- \sum_{i=1}^2 \frac{\kappa_{n,i}}{\alpha_i} 
			\left( \lambda_{0,i} \alpha_i +  \| u_{0,i} \|_{V_i}^2 
			+ \| \nabla w_{0,i} \|_{L^2}^2 
			- \mu_i \int_{\RN} u_{0,i}^{2p_i+2} + w_{0,i}^{2p_i+2} \rd x
			\right)
			\\
			& \, - \int_{\RN} \mu_1 u_{0,1} w_{n,1}^{2p_1+1} 
			+ \mu_2 u_{0,2} w_{n,2}^{2p_2 + 1} 
			+ \beta u_{0,2} w_{n,1}^{p_3+1} w_{n,2}^{p_3} \rd x 
			\\
			& \, 
			+ \beta \left( \frac{\kappa_{n,1}}{\alpha_1} + \frac{\kappa_{n,2}}{\alpha_2} \right) 
			\int_{\RN} u_{0,1}^{p_3+1} u_{0,2}^{p_3+1} + w_{0,1}^{p_3+1} w_{0,2}^{p_3+1} \rd x
			+ \left\{\begin{aligned}
				& o(\kappa_{n,1}^{\theta_0}) & &\text{if $\kappa_{n,1} > 0$},
				\\
				& o( \kappa_{n,2}^{\theta_0} ) & &\text{if $\kappa_{n,1} = 0 < \kappa_{n,2}$}. 
			\end{aligned}\right.
		\end{aligned}		
	\]
By \eqref{eq:3.1} and \eqref{eq:3.3}, note that 
	\[
		\begin{aligned}
			\| u_{0,1} \|_{V_1}^2 &= - \lambda_{0,1} \| u_{0,1} \|_{L^2}^2 
			+ \int_{\RN} \mu_1 u_{0,1}^{2p_1+2} + \beta u_{0,1}^{p_3+1} u_{0,2}^{p_3+1} \rd x,
			\\
			\| u_{0,2} \|_{V_2}^2 &= - \lambda_{0,2} \| u_{0,2} \|_{L^2}^2
			+ \int_{\RN} \mu_2 u_{0,2}^{2p_2+2} + \beta u_{0,1}^{p_3+1} u_{0,2}^{p_3+1} \rd x,
			\\
			\| \nabla w_{0,1} \|_{L^2}^2 &= - \lambda_{0,1} \| w_{0,1} \|_{L^2}^2 
			+ \int_{\RN} \mu_1 w_{0,1}^{2p_1+2} + \beta w_{0,1}^{p_3+1} w_{0,2}^{p_3+1} \rd x,
			\\
			\| \nabla w_{0,2} \|_{L^2}^2 &= - \lambda_{0,2} \| w_{0,2} \|_{L^2}^2
			+ \int_{\RN} \mu_2 w_{0,2}^{2p_2+2} + \beta w_{0,1}^{p_3+1} w_{0,2}^{p_3+1} \rd x.
		\end{aligned}
	\]
Recalling $\alpha_i = \| u_{0,i} \|_{L^2}^2 + \| w_{0,i} \|_{L^2}^2$, 
we have 
	\[
		\| u_{0,i} \|_{V_i}^2 + \| \nabla w_{0,i} \|_{L^2}^2 
		=  - \lambda_{0,i} \alpha_i 
		+ \mu_i \int_{\RN} u_{0,i}^{2p_i+2} + w_{0,i}^{2p_i+2} \rd x 
		+ \beta \int_{\RN} u_{0,1}^{p_3+1} u_{0,2}^{p_3+1} 
		+ w_{0,1}^{p_3+1} w_{0,2}^{p_3+1} \rd x. 
	\]
Thus, 
	\begin{equation}\label{eq:3.52}
		\begin{aligned}
			& E( \tau_{n,1} [ u_{0,1} + w_{n,1} ], \ \tau_{n,2} [ u_{0,2} + w_{n,2} ] )
			\\
			\leq & \, 
			E(u_0) + E_\infty(w_0) - \int_{\RN} \mu_1 u_{0,1} w_{n,1}^{2p_1+1} 
			+ \mu_2 u_{0,2} w_{n,2}^{2p_2+1} + \beta u_{0,2} w_{n,1}^{p_3+1} w_{n,2}^{p_3} \rd x
			\\
			& \quad  
			+ \left\{\begin{aligned}
				& o(\kappa_{n,1}^{\theta_0}) & &\text{if $\kappa_{n,1} > 0$},
				\\
				& o( \kappa_{n,2}^{\theta_0} ) & &\text{if $\kappa_{n,1} = 0 < \kappa_{n,2}$}. 
			\end{aligned}\right.
		\end{aligned}
	\end{equation}

\medskip

\noindent
{\bf Step 4:} {\sl Conclusion}

\medskip

	Recalling \eqref{eq:3.36}, 
we first consider the case $\kappa_{n,1} > 0$. 
In this case, by Lemma \ref{lemma:3.1}, we notice that 
	\[
		c_0 (1+|x|)^{-(N-1)/2} \exp \left( - \sqrt{\lambda_{0,1}} |x| \right) 
		\leq u_{0,1}(x), \quad 
		w_{n,1}^{2p_1+1} (x) \leq \exp \left( - \sqrt{\lambda_{0,1} + \eta_0} |x| \right)
	\]
for some $\eta_0 > 0$. Hence, Lemma \ref{lemma:3.3} yields 
	\[
		\int_{\RN} u_{0,1} w_{n,1}^{2p_1 + 1} \rd x 
		\geq c (1+n)^{-(N-1)/2} \exp \left( - \sqrt{\lambda_{0,1} } n  \right). 
	\]
Since $\theta_0 > 1$,  it follows from Step 2 that 
	\[
		\frac{\kappa_{n,1}^{\theta_0}}{(1+n)^{-(N-1)/2}  \exp \left(  -\sqrt{\lambda_{0,1}} n \right)}
		\to 0 \quad {\rm as} \ n \to \infty.
	\]
Thus by \eqref{eq:3.52}, for sufficiently large $n$, we obtain 
	\[
		E( \tau_{n,1} [ u_{0,1} + w_{n,1} ], \ \tau_{n,2} [ u_{0,2} + w_{n,2} ] ) 
		< E(u_0) + E_\infty(w_0) = e( \gamma_{u_0} ) + e_\infty(\gamma_{w_0}).
	\]
Hence, \eqref{eq:3.35} holds.

	Next, suppose $\kappa_{n,1} = 0 < \kappa_{n,2}$. 
Since $w_{n,1} u_{0,1} \equiv 0$, \eqref{eq:3.52} becomes 
	\[
		E( u_{0,1} + w_{n,1}, \tau_{n,2} [ u_{0,2} + w_{n,2} ] ) 
		\leq E(u_0) + E_\infty(w_0) - \int_{\RN} \mu_2 u_{0,2} w_{n,2}^{2p_2+1} 
		+ \beta u_{0,2} w_{n,1}^{p_3+1} w_{n,2}^{p_3} \rd x 
		+ o( \kappa_{n,2}^{\theta_0} ).
	\]

	We first treat the case $u_{0,1} > 0$ and $w_{0,1} \equiv 0$, hence, 
we have 
	\begin{equation}\label{eq:3.53}
		E( u_{0,1} + w_{n,1}, \tau_{n,2} [ u_{0,2} + w_{n,2} ] ) 
		\leq E(u_0) + E_\infty(w_0) - \int_{\RN} \mu_2 u_{0,2} w_{n,2}^{2p_2+1} \rd x 
		+ o( \kappa_{n,2}^{\theta_0} ).
	\end{equation}

	When either $1 \leq p_3$ or $0<p_3<1$ and 
$\lambda_{0,3} = (1+p_3)^2 \lambda_{0,1} / (1-p_3)^2 > \lambda_{0,2}$, 
it follows from Lemmas \ref{lemma:3.1} and \ref{lemma:3.3} that 
for  some $c_1,c_2>0$, 
	\[
		\begin{aligned}
			\int_{\RN} u_{0,2} w_{n,2}^{2p_2+1} \rd x 
			&\geq \int_{\RN} c_1 w_{n,2}^{2p_2+1} (1+|x|)^{-(N-1)/2} \exp 
			\left( - \sqrt{\lambda_{0,2}} |x| \right) \rd x
			\\
			&\geq c_2 n^{-(N-1)/2} \exp \left( - \sqrt{\lambda_{0,2}} n \right).
		\end{aligned}
	\]
On the other hand, by Step 2 and $\theta_0>1$, we have 
	\[
		\frac{\kappa_{n,2}^{\theta_0}}{n^{-(N-1)/2} \exp \left( - \sqrt{\lambda_{0,2}} n \right)} 
		\to 0 \quad {\rm as} \ n \to \infty.
	\]
From \eqref{eq:3.53}, we observe that \eqref{eq:3.35} holds.

	Next suppose $0 < p_3 < 1$ and 
$\lambda_{0,3}  \leq \lambda_{0,2}$. 
In this case, Lemmas \ref{lemma:3.1} and \ref{lemma:3.3} imply that 
for every $ \lambda_{0,3} < \lambda_2$, 
	\[
		\int_{\RN} u_{0,2} w_{n,2}^{2p_2+1} \rd x 
			 \geq C_{\lambda_2} \exp \left( - \sqrt{\lambda_2} n \right).
	\]
Noting $\theta_0>1$ and Step 2, we see that 
	\[
		\kappa_{n,2}^{\theta_0} 
		\left( \int_{\RN} u_{0,2} w_{n,2}^{2p_2+1} \rd x  \right)^{-1} 
		\to 0 \quad {\rm as} \ n \to \infty,
	\]
which gives \eqref{eq:3.35} through \eqref{eq:3.53}.

	Finally, we treat the case $u_{0,1} \equiv 0$ and $0<u_{0,2},w_{0,1}, w_{0,2}$. 
In this case, \eqref{eq:3.52} becomes 
	\[
		E(w_{n,1} , \tau_{n,2} [u_{0,2} + w_{n,2}] ) 
		\leq E(u_0) + E_\infty(w_0) - \int_{\RN} \mu_2 u_{0,2} w_{n,2}^{2p_2+1} 
		+ \beta u_{0,2} w_{n,1}^{p_3+1} w_{n,2}^{p_3} \rd x 
		+ o (\kappa_{n,2}^{\theta_0}).
	\]

	When either $1 \leq p_3$ or else $0<p_3<1$ and 
$\lambda_{0,3} > \lambda_{0,2}$, 
we can use a similar argument and obtain 
	\[
		\kappa_{n,2}^{\theta_0} \left( \int_{\RN} u_{0,2} w_{n,2}^{2p_2+1}  \rd x \right)^{-1} 
		\to 0
	\]
and \eqref{eq:3.35} holds.

	On the other hand, when $0<p_3<1$ and 
$\lambda_{0,3} \leq \lambda_{0,2}$, 
by \eqref{eq:3.1} and \eqref{eq:3.3} with $u_{0,1} \equiv 0$, one has 
	\[
		\begin{aligned}
			\int_{\RN} \mu_2 u_{0,2} w_{n,2}^{2p_2+1} 
			+ \beta u_{0,2} w_{n,1}^{p_3+1} w_{n,2}^{p_3} \rd x
			&= \int_{\RN} \nabla w_{n,2} \cdot \nabla u_{0,2} 
			+ \lambda_{0,2} w_{n,2} u_{0,2} \rd x 
			\\
			&= \int_{\RN} - V_2(x) u_{0,2} w_{n,2} 
			+ \mu_2 u_{0,2}^{2p_2+1} w_{n,2} \rd x
			\\
			&\geq \mu_2 \int_{\RN} u_{0,2}^{2p_2+1} w_{n,2} \rd x.
		\end{aligned}
	\]
Hence, by Lemmas \ref{lemma:3.1} and \ref{lemma:3.3}, 
for every $\lambda_1 < \lambda_{0,3} < \lambda_2$, 
there exist $C_{\lambda_1}, C_{\lambda_2} > 0$ such that 
	\[
		C_{\lambda_2} \exp \left( - \sqrt{\lambda_2} n \right) 
		\leq \int_{\RN} u_{0,2}^{2p_2+1} w_{n,2} \rd x 
		\leq C_{\lambda_1} \exp \left( - \sqrt{\lambda_1} n \right). 
	\]
Thus, Step 2 yields 
	\[
		\kappa_{n,2}^{\theta_0} 
		\left( \int_{\RN} u_{0,2}^{2p_2+1} w_{n,2} \rd x \right)^{-1} \to 0 
		\quad {\rm as} \ n \to \infty
	\]
and \eqref{eq:3.35} holds.

Since \eqref{eq:3.35} gives a contradiction, 
the dichotomy does not happen and compactness occurs. 
\end{proof}

\section{Proof of Theorem \ref{theorem:1.2}}
\label{section:4}

	In this section, we shall prove Theorem \ref{theorem:1.2} and hereafter we always 
suppose (p1) and (V2). 
Let $(u_n) \subset \wt{M}(\alpha)$ be any minimizing sequence for $\tea$. 
From Lemmas \ref{lemma:2.1}, \ref{lemma:2.2} and \ref{lemma:2.3}, we may suppose that 
$(u_n)$ is bounded in $\tH$ and satisfies 
	\begin{equation}\label{eq:4.1}
		\| E'(u_n) - \lambda_{n,1} Q_1'(u_n) - \lambda_{n,2} Q_2'(u_n) \|_{(\tH)^\ast} \to 0, \quad 
		\| (u_{n,i})_- \|_{L^2} \to 0.
	\end{equation}
Furthermore, let $u_{n} \rightharpoonup u_0$ weakly in $\tH$ and 
$\lambda_{n,i} \to \lambda_{0,i}$. 
By $V_2(x) \to \infty$ as $|x| \to \infty$ in (V2) and the result in \cite{BW-95}, one sees 
	\begin{equation}\label{eq:4.2}
		\left\| u_{n,2} - u_{0,2} \right\|_{L^q} \to 0 \quad 
		\text{for all } q \in [2, 2^\ast ). 
	\end{equation}
Hence, $\alpha_2 = \| u_{0,2} \|_{L^2}^2$. 
Thus, what remains to prove is $\alpha_1 = \| u_{0,1} \|_{L^2}^2$ 
since this leads to $\| u_{n,1} - u_{0,1} \|_{L^2} \to 0$ and 
$\| u_n - u_0 \|_{\tH} \to 0$ from $E(u_n) \to \tea$.

	We argue indirectly and suppose 
	\begin{equation}\label{eq:4.3}
		\| u_{0,1}  \|_{L^2}^2 < \alpha_1.
	\end{equation}

We first show 
	\begin{lemma}\label{lemma:4.1}
		$\lim_{n\to\infty} \sup_{z \in \Z^N} \| u_{n,1} - u_{0,1} \|_{L^2(z+Q)} > 0$. 
	\end{lemma}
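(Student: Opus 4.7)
\emph{Proof sketch (proposed).} My plan is to argue by contradiction: assume, after passing to a subsequence, that $\sup_{z \in \Z^N} \|u_{n,1} - u_{0,1}\|_{L^2(z+Q)} \to 0$. Applying the classical vanishing lemma of Lions to the $H^1$-bounded sequence $v_n := u_{n,1} - u_{0,1}$, this forces $v_n \to 0$ strongly in $L^q(\RN)$ for every $q \in (2,2^\ast)$. Condition \textup{(p1)} ensures that the exponents $2p_1+2$, $2p_2+2$ and $2(p_3+1)$ all lie in $(2,2^\ast)$, so together with \eqref{eq:4.2} I can pass to the limit in each nonlinear integral appearing in $E$: namely $\int_{\RN} |u_{n,1}|^{2p_1+2} \to \int_{\RN} |u_{0,1}|^{2p_1+2}$, $\int_{\RN} |u_{n,2}|^{2p_2+2} \to \int_{\RN} |u_{0,2}|^{2p_2+2}$, and by H\"older $\int_{\RN} |u_{n,1}|^{p_3+1} |u_{n,2}|^{p_3+1} \to \int_{\RN} |u_{0,1}|^{p_3+1} |u_{0,2}|^{p_3+1}$.

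For the quadratic part I would argue as follows. Since $V_1 \in L^\infty(\RN)$ with $V_1(x) \to 0$ as $|x| \to \infty$ and $u_{n,1} \to u_{0,1}$ in $L^2_{\mathrm{loc}}(\RN)$ (by Rellich), a standard splitting $\RN = B_R \cup (\RN \setminus B_R)$, letting $n \to \infty$ and then $R \to \infty$, yields $\int_{\RN} V_1 |u_{n,1}|^2 \to \int_{\RN} V_1 |u_{0,1}|^2$. For the gradient and $V_2$-weighted terms I rely on weak lower semicontinuity coming from $u_n \rightharpoonup u_0$ weakly in $\tH$: namely $\liminf_n \|\nabla u_{n,i}\|_{L^2}^2 \geq \|\nabla u_{0,i}\|_{L^2}^2$ for $i=1,2$ and $\liminf_n \int_{\RN} V_2 |u_{n,2}|^2 \geq \int_{\RN} V_2 |u_{0,2}|^2$. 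Combining these with $E(u_n) \to \tea$, and noting that $u_0 \in \wt{M}(\alpha_1 - \gamma, \alpha_2)$ where $\gamma := \alpha_1 - \|u_{0,1}\|_{L^2}^2 > 0$ by \eqref{eq:4.3}, I obtain
\[
\tea = \lim_{n \to \infty} E(u_n) \geq E(u_0) \geq \wt{e}(\alpha_1 - \gamma, \alpha_2).
\]

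On the other hand, Lemma~\ref{lemma:2.1}(iii) in the \textup{(V2)} setting supplies the companion upper bound $\tea \leq \wt{e}(\alpha_1 - \gamma, \alpha_2) + e_1(\gamma)$, so subtraction forces $e_1(\gamma) \geq 0$. However, the standard $L^2$-preserving rescaling $\varphi_\lambda(x) := \lambda^{N/2} \varphi(\lambda x)$ applied to any fixed $\varphi \in C^\infty_0(\RN)$ with $\|\varphi\|_{L^2}^2 = \gamma$ and $\|\varphi\|_{L^{2p_2+2}} > 0$ gives
\[
E_1(\varphi_\lambda) = \frac{\lambda^2}{2} \|\nabla \varphi\|_{L^2}^2 - \frac{\mu_2 \lambda^{Np_2}}{2p_2+2} \|\varphi\|_{L^{2p_2+2}}^{2p_2+2},
\]
and the $L^2$-subcritical condition $Np_2 < 2$ coming from \textup{(p1)} makes $E_1(\varphi_\lambda) < 0$ for all sufficiently small $\lambda > 0$, so $e_1(\gamma) < 0$, contradicting the inequality $e_1(\gamma) \geq 0$ just derived. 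The main obstacle is the careful passage to the limit in the $V_1$-weighted integral and the coupling integral under the vanishing hypothesis; once these two convergences are in place, the remainder of the proof is a direct invocation of Lemma~\ref{lemma:2.1}(iii) together with the elementary scaling argument above.
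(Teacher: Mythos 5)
Your proposal is correct and follows essentially the same route as the paper: assume vanishing, upgrade to strong $L^q$ convergence to get $\tea \geq E(u_0) \geq \wt{e}(\|u_{0,1}\|_{L^2}^2,\alpha_2)$, and contradict the subadditivity bound of Lemma \ref{lemma:2.1}(iii) using $e_1(\gamma)<0$. The only (harmless) difference is that you fill in details the paper leaves implicit, namely the limit passage in the $V_1$-weighted and coupling integrals and the elementary scaling proof of $e_1(\gamma)<0$, which the paper simply cites from \cite{C-03,S14}.
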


	\begin{proof}
If $\sup_{z \in \Z^N} \| u_{n,1} - u_{0,1}  \|_{L^2(z+Q)} \to 0$, then we have 
$ \| u_{n,1} - u_{0,1} \|_{L^q} \to 0$ for any $2<q < 2^\ast$. 
Therefore, the weak lower semicontinuity of the norms and \eqref{eq:4.2} yield 
	\[
		\wt{e} \left(  \| u_{0,1} \|_{L^2}^2 , \alpha_2 \right) 
		\leq E( u_0 ) 
		\leq \liminf_{n \to \infty} E(u_n) = \tea. 
	\]
Next, remark that $e_1(\gamma) < 0$ holds for any $\gamma > 0$. 
See \cite{C-03,S14}. 
Thus, Lemma \ref{lemma:2.1} (iii) and \eqref{eq:4.3} give a contradiction:  
	\[
		\wt{e} \left( \| u_{0,1} \|_{L^2}^2 , \alpha_2 \right) 
		\leq \tea \leq 
		 \wt{e} \left(  \| u_{0,1} \|_{L^2}^2 , \alpha_2 \right)  + 
		e_1 \left( \alpha_1 - \| u_{0,1} \|_{L^2}^2 \right)
		< \wt{e} \left( \| u_{0,1} \|_{L^2}^2 , \alpha_2 \right) .
	\]
Hence, Lemma \ref{lemma:4.1} holds. 
	\end{proof}

	From Lemma \ref{lemma:4.1}, 
we may find $(y_n)_{n=1}^\infty \subset \RN$ such that 
	\[
		|y_n| \to \infty, \quad u_{n,1}( \cdot +y_n) \rightharpoonup w_{0,1} \not\equiv 0
		\quad \text{weakly in } H^1(\RN).
	\]
Next, as in Lemma \ref{lemma:3.6}, we shall prove

	\begin{lemma}\label{lemma:4.2}
		We have 
		$\| u_{n,1} - u_{0,1} - w_{0,1}(\cdot - y_n) \|_{H^1} \to 0$ and 
		\begin{equation}\label{eq:4.4}
			\tea = \wt{e} \left( \| u_{0,1} \|_{L^2}^2 , \alpha_2 \right) 
			+ e_1 \left( \| w_{0,1} \|_{L^2}^2 \right) 
			= E(u_0) + E_1(w_{0,1}).
		\end{equation}
	\end{lemma}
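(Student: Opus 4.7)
The strategy is to split the first component as $u_{n,1} = u_{0,1} + w_{0,1}(\cdot - y_n) + v_n$ with $v_n := u_{n,1} - u_{0,1} - w_{0,1}(\cdot - y_n) \rightharpoonup 0$ weakly in $H^1$, decompose $E(u_n)$ accordingly, and then combine the resulting lower bound with Lemma~\ref{lemma:2.1}(iii) and strict subadditivity of $e_1$ to force $v_n\to 0$ and extract the claimed equalities.

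First I would establish Brezis--Lieb type splittings as in \eqref{eq:3.25}, \eqref{eq:3.27}: since $|y_n|\to\infty$, for every $q\in[2, 2N/(N-2)_+)$,
\[
\|u_{n,1}\|_{L^q}^q = \|u_{0,1}\|_{L^q}^q + \|w_{0,1}\|_{L^q}^q + \|v_n\|_{L^q}^q + o(1),
\]
with the analogous identities for $\|\nabla u_{n,1}\|_{L^2}^2$ and $\|u_{n,1}\|_{L^2}^2$. Write $\gamma_{u_0,1}:=\|u_{0,1}\|_{L^2}^2$, $\gamma_{w_0,1}:=\|w_{0,1}\|_{L^2}^2$ and $\delta:=\lim\|v_n\|_{L^2}^2 = \alpha_1-\gamma_{u_0,1}-\gamma_{w_0,1}\ge 0$. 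Since $V_1\in C(\RN)$ with $V_1(x)\to 0$ at infinity, multiplication by $V_1$ is a compact perturbation on $H^1$, so $\int_{\RN} V_1 u_{n,1}^2\,\rd x\to\int_{\RN} V_1 u_{0,1}^2\,\rd x$. For the second component, $V_2\to\infty$ yields compactness of $\tH_2\hookrightarrow L^q$ for $q\in[2,2^*)$, which already gives \eqref{eq:4.2} and $\|u_{0,2}\|_{L^2}^2=\alpha_2$; once the other pieces of the energy are lined up below, weak lower semicontinuity combined with $E(u_n)\to\tea$ will also force $\|u_{n,2}\|_{V_2}^2\to\|u_{0,2}\|_{V_2}^2$.

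The delicate step is the coupling term. Tightness of $(u_{n,2})$ in $L^{2(p_3+1)}$ (from $V_2\to\infty$: for each $\varepsilon>0$ choose $R$ with $V_2\ge\varepsilon^{-1}$ on $|x|>R$, then $\int_{|x|>R}u_{n,2}^2\,\rd x\le\varepsilon\|u_{n,2}\|_{V_2}^2\le C\varepsilon$, and bootstrap via Gagliardo--Nirenberg) together with the fact that $w_{0,1}(\cdot-y_n)$ concentrates near $|y_n|\to\infty$ yield, by H\"older,
\[
\int_{\RN}\bigl(|w_{0,1}(\cdot-y_n)|+|v_n|\bigr)^{p_3+1}|u_{n,2}|^{p_3+1}\,\rd x\to 0.
\]
Combined with Brezis--Lieb this gives $\int_{\RN}|u_{n,1}|^{p_3+1}|u_{n,2}|^{p_3+1}\,\rd x\to\int_{\RN}|u_{0,1}|^{p_3+1}|u_{0,2}|^{p_3+1}\,\rd x$, so assembling everything,
\[
E(u_n)=E(u_0)+E_1(w_{0,1})+\Bigl(\tfrac{1}{2}\|\nabla v_n\|_{L^2}^2-\tfrac{\mu_1}{2p_1+2}\|v_n\|_{L^{2p_1+2}}^{2p_1+2}\Bigr)+o(1).
\]

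Passing to the limit and using $E(u_0)\ge\wt e(\gamma_{u_0,1},\alpha_2)$ and $E_1(w_{0,1})\ge e_1(\gamma_{w_0,1})$ (by the definitions of the infima, noting $u_{0,2}\in\tH_2$ with $\|u_{0,2}\|_{L^2}^2=\alpha_2$), one obtains
\[
\tea\ge \wt e(\gamma_{u_0,1},\alpha_2)+e_1(\gamma_{w_0,1})+\liminf_{n\to\infty}\Bigl[\tfrac{1}{2}\|\nabla v_n\|_{L^2}^2-\tfrac{\mu_1}{2p_1+2}\|v_n\|_{L^{2p_1+2}}^{2p_1+2}\Bigr].
\]
If $\delta>0$, strict subadditivity of $e_1$ for the $L^2$-subcritical scalar problem (standard scaling argument, cf.\ \cite{C-03,S14}) gives $e_1(\gamma_{w_0,1})+e_1(\delta)>e_1(\gamma_{w_0,1}+\delta)=e_1(\alpha_1-\gamma_{u_0,1})$, contradicting the upper bound $\tea\le\wt e(\gamma_{u_0,1},\alpha_2)+e_1(\alpha_1-\gamma_{u_0,1})$ from Lemma~\ref{lemma:2.1}(iii). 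Hence $\delta=0$; then Gagliardo--Nirenberg makes $\|v_n\|_{L^{2p_1+2}}\to 0$, equality must hold throughout the chain, and both $\|\nabla v_n\|_{L^2}\to 0$ and \eqref{eq:4.4} follow. The main obstacle is precisely the cross-term analysis above: one must confirm that the translated bump $w_{0,1}(\cdot-y_n)$ contributes nothing to the $\beta$-coupling in the limit, which is exactly the place where the confinement of the second component by $V_2$ is used in an essential way (in contrast to Theorem~\ref{theorem:1.1}, the second component cannot ``escape to infinity'', so any dichotomy is forced into the first slot alone and the coupling decouples automatically).
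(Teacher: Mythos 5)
Your proposal is correct and follows essentially the same route as the paper: a Brezis--Lieb splitting of the first component into $u_{0,1}+w_{0,1}(\cdot-y_n)+v_n$ (using $V_1\to 0$ at infinity and the compact embedding forced by $V_2\to\infty$ for the second component), a lower bound $\tea\geq \wt e(\|u_{0,1}\|_{L^2}^2,\alpha_2)+e_1(\|w_{0,1}\|_{L^2}^2)+e_1(\delta)$, and a contradiction with Lemma \ref{lemma:2.1}(iii) via the strict subadditivity of $e_1$, after which $\delta=0$ upgrades to $H^1$ convergence and the energy identity. Your explicit treatment of the $\beta$-coupling term (tightness of $u_{n,2}$ decoupling it from the escaping bump) is exactly the detail the paper leaves implicit in its reference to Lemma \ref{lemma:3.6}; the only cosmetic caveat is that the assembled identity for $E(u_n)$ should be stated as an inequality until $\|u_{n,2}\|_{V_2}^2\to\|u_{0,2}\|_{V_2}^2$ is extracted at the end, which you already acknowledge.
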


	\begin{proof}
Since a proof is similar to that of Lemma \ref{lemma:3.6}, we only give a sketch of the proof. 
We first suppose that 
$ \lim_{n\to \infty} \| u_{n,1} - u_{0,1} - w_{0,1} (\cdot - y_n) \|_{L^2} = \delta_0 >0$. 
By $V_1(x) \to 0$ as $|x| \to \infty$ due to (V2), we may prove 
	\[
		\begin{aligned}
			&E(u_n) - E(u_0) - E( w_{0,1} (\cdot - y_n) , 0 ) - E( u_{n,1} - u_{0,1} - w_{0,1} (\cdot - y_n) ) , 0) \to 0, 
			\\
			&0 < \delta_0 = \alpha_1 - \| u_{0,1} \|_{L^2}^2 - \| w_{0,1} \|_{L^2}^2.
		\end{aligned}
	\]
From $|y_n| \to \infty$ and $u_{n,1} \to u_{0,1}$ in $L^q_{\rm loc}(\RN)$ for $1 \leq q < 2^\ast$, 
it follows that 
	\[
		\begin{aligned}
			&\liminf_{n \to \infty} E( w_{0,1} (\cdot - y_n) , 0 ) 
			= \liminf_{n \to \infty} E_1( w_{0,1} (\cdot - y_n)  ) \geq e_1 \left(\| w_{0,1} \|_{L^2}^2\right), 
			\\
			&\liminf_{n\to \infty} E( u_{n,1} - u_{0,1} - w_{0,1} (\cdot - y_n) ) , 0) 
			= \liminf_{n\to\infty} E_1( u_{n,1} - u_{0,1} - w_{0,1} (\cdot - y_n) ) \geq e_1(\delta_0).
		\end{aligned}
	\]
Hence, we obtain 
	\[
		\tea \geq \wt{e} \left( \| u_{0,1} \|_{L^2}^2 , \alpha_2 \right) + e_1 \left(\| w_{0,1} \|_{L^2}^2\right) 
		+ e_1 ( \delta_0 ).
	\]

	For $e_1(\alpha)$, it is known that $e_1(\gamma + \delta) < e_1(\gamma ) + e_1(\delta)$ 
(see \cite{C-03,S14}). Combining this with Lemma \ref{lemma:2.1} (iii), we obtain a contradiction:
	\[
		\wt{e} \left( \| u_{0,1} \|_{L^2}^2 , \alpha_2 \right) 
		+ e_1 \left(  \|w_{0,1} \|_{L^2}^2 + \delta_0 \right) 
		< \tea \leq \wt{e} \left( \| u_{0,1} \|_{L^2}^2 , \alpha_2 \right) 
		+ e_1 \left(  \|w_{0,1} \|_{L^2}^2 + \delta_0 \right).
	\]
Therefore, $\| u_{n,1} - u_{0,1} - w_{0,1} (\cdot - y_n) \|_{L^2} \to 0$.

	The rest of the proof is identical to that of Lemma \ref{lemma:3.6} 
and we omit the detail. 
	\end{proof}

	To proceed, by \eqref{eq:4.1} and $\| u_{n,2} - u_{0,2} \|_{L^2} \to 0$, 
we observe that 
$u_0$ satisfies \eqref{eq:3.1} and $w_{0,1}$ is a positive solution of 
	\begin{equation}\label{eq:4.5}
		-\Delta w_{0,1} + \lambda_{0,1} w_{0,1} = \mu_1 w_{0,1}^{2p_2+1} 
		\quad \text{in } \RN. 
	\end{equation}
Next we show $\lambda_{0,1}>0$.

	\begin{lemma}\label{lemma:4.3}
		We have $\lambda_{0,1} > 0$. 
	\end{lemma}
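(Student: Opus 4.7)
The plan is to exploit the scalar structure of \eqref{eq:4.5} together with the energy information furnished by Lemma \ref{lemma:4.2} and the known strict negativity of $e_1$. Specifically, I would express $E_1(w_{0,1})$ as a linear combination of $\int_{\RN} |\nabla w_{0,1}|^2 \rd x$ and $\lambda_{0,1} \int_{\RN} w_{0,1}^2 \rd x$, and then read off the sign of $\lambda_{0,1}$ from the fact that this combination must be strictly negative.

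First I would test the scalar equation \eqref{eq:4.5} against $w_{0,1}$ itself to obtain the Nehari-type identity
$$\int_{\RN} |\nabla w_{0,1}|^2 \rd x + \lambda_{0,1} \int_{\RN} w_{0,1}^2 \rd x = \mu_1 \int_{\RN} w_{0,1}^{2p+2} \rd x,$$
where $p$ denotes the exponent appearing in \eqref{eq:4.5}, so that $0 < p < 2/N$ by \textup{(p1)}. Using this identity to eliminate the $L^{2p+2}$-term in $E_1(w_{0,1})$ yields
$$E_1(w_{0,1}) = \frac{p}{2p+2} \int_{\RN} |\nabla w_{0,1}|^2 \rd x - \frac{\lambda_{0,1}}{2p+2} \int_{\RN} w_{0,1}^2 \rd x.$$

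Next I would invoke Lemma \ref{lemma:4.2}, which guarantees $E_1(w_{0,1}) = e_1(\|w_{0,1}\|_{L^2}^2)$. Since $w_{0,1} \not\equiv 0$ by the construction of $(y_n)$ in Lemma \ref{lemma:4.1}, the mass $\|w_{0,1}\|_{L^2}^2$ is strictly positive, and the strict negativity $e_1(\gamma) < 0$ for every $\gamma > 0$ (cf. \cite{C-03,S14}, already used in the proof of Lemma \ref{lemma:4.1}) forces $E_1(w_{0,1}) < 0$. Combining with the previous identity yields
$$p \int_{\RN} |\nabla w_{0,1}|^2 \rd x < \lambda_{0,1} \int_{\RN} w_{0,1}^2 \rd x,$$
and since $w_{0,1} > 0$ is nontrivial, both integrals are strictly positive, forcing $\lambda_{0,1} > 0$.

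The point to stress is that the absence of a coupling term in the scalar limit equation \eqref{eq:4.5} is what allows this direct reading of the sign of $\lambda_{0,1}$ from the energy: unlike in Lemma \ref{lemma:3.7}, no Liouville-type nonexistence result for positive supersolutions is needed, which is consistent with the fact that \textup{(p2)} is not assumed in Theorem \ref{theorem:1.2}. There is therefore no substantive obstacle; the only item to verify is that $E_1(w_{0,1})$ really coincides with $e_1(\|w_{0,1}\|_{L^2}^2)$, which is precisely the content of Lemma \ref{lemma:4.2}.
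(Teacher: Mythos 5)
Your proof is correct, but it is not the route the paper takes. The paper establishes Lemma \ref{lemma:4.3} by combining the Nehari identity $\| \nabla w_{0,1} \|_{L^2}^2 + \lambda_{0,1} \| w_{0,1} \|_{L^2}^2 = \mu_1 \| w_{0,1} \|_{L^{2p_1+2}}^{2p_1+2}$ with the Pohozaev identity \eqref{eq:4.6} (justified as in \cite{BL-83} for $N \geq 2$ and by the first integral of the ODE for $N=1$), eliminating the $L^{2p_1+2}$-term to get
\[
0= \left( \frac{N-2}{2N} - \frac{1}{2p_1 +2} \right) \| \nabla w_{0,1} \|_{L^2}^2
+ \left( \frac{1}{2} - \frac{1}{2p_1 + 2} \right) \lambda_{0,1} \| w_{0,1} \|_{L^2}^2 ,
\]
whose first coefficient is negative (subcriticality) and second positive, forcing $\lambda_{0,1}>0$. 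You instead eliminate the $L^{2p_1+2}$-term from $E_1(w_{0,1})$ using only the Nehari identity and then invoke $E_1(w_{0,1}) = e_1(\|w_{0,1}\|_{L^2}^2) < 0$. This trades the Pohozaev identity (and the regularity/decay needed to justify it) for the variational information already on the table at that point in Section \ref{section:4}, namely the splitting in Lemma \ref{lemma:4.2} and the strict negativity of $e_1$ in the mass-subcritical regime, so your argument is arguably shorter given the surrounding machinery; the paper's argument, by contrast, uses nothing about $w_{0,1}$ beyond its being a nontrivial $H^1$ solution of \eqref{eq:4.5}, so it does not depend on minimality. One small point you should make explicit: the displayed identity \eqref{eq:4.4} only asserts the equality of the two sums, so to extract $E_1(w_{0,1}) \leq e_1(\|w_{0,1}\|_{L^2}^2)$ you need the (immediate) observation that $E(u_0) \geq \wt{e}\left( \| u_{0,1} \|_{L^2}^2, \alpha_2 \right)$ because $u_0 \in \wt{M}\left( \| u_{0,1} \|_{L^2}^2, \alpha_2 \right)$; with that line added, your proof is complete.
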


	\begin{proof}
Since $w_{0,1} \in H^1(\RN)$ is a positive solution of \eqref{eq:4.5} with $2p_1 + 1 < 2^\ast - 1$, 
as in \cite{BL-83}, when $N \geq 2$, we may show that $w_{0,1}$ satisfies the Pohozaev identity 
	\begin{equation}\label{eq:4.6}
		0 = \frac{N-2}{2N} \| \nabla w_{0,1} \|_{L^2}^2 
		-  \frac{\mu_1}{2p_2+2} \|w_{0,1} \|_{L^{2p_1+2}}^{2p_1+2} 
		+ \frac{\lambda_{0,1}}{2} \| w_{0,1} \|_{L^2}^2. 
	\end{equation}
We note that when $N=1$, \eqref{eq:4.6} also holds since $w_{0,1}$ satisfies 
	\[
		\frac{1}{2} \left( w_{0,1}'(x) \right)^2 + \frac{\mu_1}{2p_1+2} w_{0,1}^{2p_1+2} (x) 
		- \frac{\lambda_{0,1}}{2} w_{0,1}^2(x) \equiv 0 \quad \text{in} \ \R.
	\]
Thus, from $\| \nabla w_{0,1} \|_{L^2}^2 + \lambda_{0,1} \| w_{0,1} \|_{L^2}^2 
	= \mu_1 \| w_{0,1} \|_{L^{2p_1+2}}^{2p_1+2}$, it follows that 
	\[
		0= \left( \frac{N-2}{2N} - \frac{1}{2p_1 +2} \right) \| \nabla w_{0,1} \|_{L^2}^2 
		+ \left( \frac{1}{2} - \frac{1}{2p_1 + 2} \right) \lambda_{0,1} \| w_{0,1} \|_{L^2}^2,
	\]
which implies $\lambda_{0,1} > 0$ due to $w_{0,1} \not\equiv 0$. 
	\end{proof}

	By (V2), especially $V_2(x) \to \infty$ as $|x| \to \infty$, 
$\lambda_{0,1} > 0$ and \eqref{eq:3.1}, as in the proof of Lemma \ref{lemma:3.1} (Steps 1 and 2) and \cite{GNN-81}, 
if $u_{0,1} \not \equiv 0$, then for each $\lambda \in (0,\lambda_{0,1})$ 
we may find a $C_\lambda > 0$ such that for all $x \in \RN$, 
	\begin{equation}\label{eq:4.7}
		\begin{aligned}
			&u_{0,1}(x) + u_{0,2}(x) \leq C_\lambda \exp \left( - \sqrt{\lambda} |x| \right) ,
			\\
			& c_0 \left( 1 + |x| \right)^{-(N-1)/2} \exp \left( - \sqrt{\lambda_{0,1}} |x| \right) 
			\leq u_{0,1} (x) ,
			\\
			&c_0 \left( 1 + |x| \right)^{-(N-1)/2} \exp \left( - \sqrt{\lambda_{0,1}} |x| \right)  
			\leq w_{0,1} (x) \leq 
			c_1 \left( 1 + |x| \right)^{-(N-1)/2} \exp \left( - \sqrt{\lambda_{0,1}} |x| \right)
		\end{aligned}
	\end{equation}
where $0<c_0 \leq c_1$. 
Now we derive a contradiction and complete the proof of Theorem \ref{theorem:1.2}:

\begin{proof}[Proof of Theorem \ref{theorem:1.2}]
As in the proof of Theorem \ref{theorem:1.1}, we set 
	\[
		w_n(x) := w_{0,1} (x - n \mathbf{e}_1 ), \quad 
		\tau_n := \frac{\sqrt{\alpha_1}}{\| u_{0,1} + w_n \|_{L^2}}, \quad 
		\kappa_n := \la u_{0,1} , w_n \ra_{L^2}.
	\]
Then 
	\[
		\kappa_n \to 0, \quad 
		\tau_n = 1 - \frac{\kappa_n}{\alpha_1} + O( \kappa_n^2 ).
	\]
Since $( \tau_n ( u_{0,1} + w_n )  , u_{0,2}  ) \in \wt{M}(\alpha)$, it follows that 
	\begin{equation}\label{eq:4.8}
		\begin{aligned}
			\tea & \leq E( \tau_n ( u_{0,1} + w_n )  , u_{0,2}  ) 
			\\
			&= \frac{\tau_n^2}{2} \|  u_{0,1} + w_n \|_{V_1}^2 + \frac{1}{2} \| u_{0,2} \|_{V_2}^2 
			- \frac{\mu_1 \tau_n^{2p_1+2} }{2p_1+2} \int_{\RN} \left( u_{0,1} + w_n \right)^{2p_1+2} \rd x 
			- \frac{ \mu_2 }{2p_2+2} \int_{\RN} u_{0,2}^{2p_2+2} \rd x 
			\\
			&\quad - \frac{\beta \tau_n^{p_3+1} }{p_3+1} 
			\int_{\RN}  \left(u_{0,1} + w_n \right)^{p_3+1}  u_{0,2}^{p_3+1} \rd x
			\\
			&= \frac{1}{2}  \left( 1 - \frac{2\kappa_n}{\alpha_1} \right)  
			\left( \| u_{0,1} \|_{V_1}^2 + 2 \la u_{0,1} , w_n \ra_{V_1} + \| w_n \|_{V_1}^2 \right)
			+ \frac{1}{2} \| u_{0,2} \|_{V_2}^2  
				- \frac{\mu_2}{2p_2+2} \int_{\RN} u_{0,2}^{2p_2+2} \rd x
			\\
			&\quad 
			- \frac{\mu_1}{2p_1+2} \left( 1 - \frac{2p_1+2}{\alpha_1} \kappa_n \right) 
			\int_{\RN} \left( u_{0,1} + w_n \right)^{2p_1+2} \rd x 
			\\
			&\quad 
			- \frac{\beta}{p_3+1} \left( 1 - \frac{p_3+1}{\alpha_1} \kappa_n \right) 
			\int_{\RN} \left( u_{0,1} + w_n \right)^{p_3+1}  u_{0,2}^{p_3+1} \rd x + O(\kappa_n^2).
		\end{aligned}
	\end{equation}

	When $u_{0,1} \equiv 0$, we have $\kappa_{n} = 0$. 
Since $u_{0,2} > 0$ and $w_n>0$, \eqref{eq:4.8} becomes 
	\[
		\begin{aligned}
			&\tea 
			\\
			\leq \, &
			\frac{1}{2} \| w_n \|_{V_1}^2  + \frac{1}{2} \| u_{0,2} \|_{V_2}^2 
			- \frac{\mu_2}{2p_2+2} \int_{\RN} u_{0,2}^{2p_2+2} \rd x 
			- \frac{\mu_1}{2p_1+2} \int_{\RN} w_n^{2p_1+2} \rd x 
			- \frac{\beta}{p_3+1} \int_{\RN} w_n^{p_3+1} u_{0,2}^{p_3+1} \rd x
			\\
			< \, & E(u_0) + E_1(w_{0,1}).
		\end{aligned}
	\]
Hence, this contradicts \eqref{eq:4.4}.

	Next, we suppose $u_{0,1} \not\equiv 0$ and 
it follows from $u_{0,1},w_n > 0$, \eqref{eq:4.7} and \eqref{eq:4.5} that 
	\begin{equation}\label{eq:4.9}
		\begin{aligned}
			\la u_{0,1} , w_n \ra_{V_1} 
			&= - \lambda_{0,1} \kappa_n + \int_{\RN} \mu_1 u_{0,1}^{2p_1+1} w_{n} 
			+ \beta u_{0,1}^{p_3} u_{0,2}^{p_3+1} w_n \rd x 
			= O( \kappa_n^{1/2} ),
			\\
			\| u_{0,1} \|_{V_1}^2 
			&= - \lambda_{0,1} \| u_{0,1} \|_{L^2}^2 
			+ \int_{\RN} \mu_1 u_{0,1}^{2p_1+2} + \beta u_{0,1}^{p_3+1} u_{0,2}^{p_3+1} \rd x,
			\\
			\| \nabla w_{n} \|_{L^2}^2 
			&= - \lambda_{0,1} \| w_{n} \|_{L^2}^2 + \int_{\RN} \mu_1 w_{n}^{2p_1+2} \rd x.
		\end{aligned}
	\end{equation}
Therefore, by $V_1(x) \leq 0$, we obtain 
	\begin{equation}\label{eq:4.10}
		\begin{aligned}
			& \frac{1}{2} \left( 1 - \frac{2\kappa_n}{\alpha_1} \right) 
			\left( \| u_{0,1} \|_{V_1}^2 + 2 \la u_{0,1} , w_n \ra_{V_1} + \| w_n \|_{V_1}^2 \right) 
			\\
			\leq \, & \frac{1}{2} \left( \| u_{0,1} \|_{V_1}^2  +  \| \nabla w_{0,1} \|_{L^2}^2 \right) 
			+ \la u_{0,1} , w_n \ra_{V_1} - \frac{\kappa_n}{\alpha_1} 
			\left( \| u_{0,1} \|_{V_1}^2 + \| \nabla w_{0,1}\|_{L^2}^2 \right) + O(\kappa_n^{3/2})
			\\
			= \, & \frac{1}{2} \left( \| u_{0,1} \|_{V_1}^2  +  \| \nabla w_{0,1} \|_{L^2}^2 \right) 
			 + \mu_1 \int_{\RN} u_{0,1}^{2p_1+1} w_n \rd x + \beta \int_{\RN} u_{0,1}^{p_3} u_{0,2}^{p_3+1} w_n \rd x
			 \\
			 & \quad 
			- \frac{\kappa_n}{\alpha_1} 
			\left( \lambda_{0,1} \alpha_1 + \| u_{0,1} \|_{V_1}^2 + \| \nabla w_{0,1}\|_{L^2}^2 \right) 
			 + O(\kappa_n^{3/2}).
		\end{aligned}
	\end{equation}
By Lemmas \ref{lemma:3.3} and \ref{lemma:3.4}, \eqref{eq:4.7} and $\kappa_{n} \to 0$, we observe that 
	\begin{equation}\label{eq:4.11}
		\begin{aligned}
			&- \frac{\mu_1}{2p_1+2} \left( 1 - \frac{2p_1+2}{\alpha_1} \kappa_n \right) 
			\int_{\RN} \left( u_{0,1} + w_n \right)^{2p_1+2} \rd x 
			\\
			 \leq \,& 
			 -\frac{\mu_1}{2p_1+2} \int_{\RN} u_{0,1}^{2p_1+2} + w_n^{2p_1+2} + (2p_1+2) 
			 \left( u_{0,1}^{2p_1+1} w_n + w_n^{2p_1+1} u_{0,1} \right) \rd x 
			 \\
			 & \quad + \frac{\kappa_n}{\alpha_1} \mu_1 \int_{\RN} u_{0,1}^{2p_1+2} + w_n^{2p_1+2} \rd x 
			 	+ O(\kappa_n^{\theta}) 
		\end{aligned}
	\end{equation}
for some $\theta > 1$. Next, by 
	\[
		a^{p_3+1} + (p_3+1) a^{p_3} b \leq (a+b)^{p_3+1} 
		\leq a^{p_3+1} + (p_3+1) (a+b)^{p_3} b \quad 
		\text{for each $a,b \geq 0$}, 
	\]
we obtain 
	\begin{equation}\label{eq:4.12}
		\begin{aligned}
			& - \frac{\beta}{p_3+1} \int_{\RN} \left( u_{0,1} + w_n \right)^{p_3+1}  u_{0,2}^{p_3+1} \rd x 
			\\
			\leq \, & -\frac{\beta}{p_3+1} \int_{\RN} 
			u_{0,1}^{p_3+1} u_{0,2}^{p_3+1} +(p_3+1) u_{0,1}^{p_3} u_{0,2}^{p_3+1} w_n \rd x 
		\end{aligned}
	\end{equation}
and 
	\begin{equation}\label{eq:4.13}
		\frac{\kappa_n}{\alpha_1} \beta \int_{\RN} (u_{0,1}+w_n)^{p_3+1} u_{0,2}^{p_3+1} \rd x 
		\leq \frac{\kappa_n}{\alpha_1} \beta 
		\int_{\RN} u_{0,1}^{p_3+1} u_{0,2}^{p_3+1} \rd x + O(\kappa_n^{\theta}).
	\end{equation}

	From \eqref{eq:4.8}--\eqref{eq:4.13} and $\alpha_1 = \| u_{0,1} \|_{L^2}^2 + \| w_{0,1} \|_{L^2}^2$, 
it follows that 
	\[
		\begin{aligned}
			\tea 
			&\leq 
			\frac{1}{2} \left( \| u_{0,1} \|_{V_1}^2 + \| \nabla w_{0,1} \|_{L^2}^2 \right) 
			+ \mu_1 \int_{\RN} u_{0,1}^{2p_1+1} w_n \rd x + \beta \int_{\RN} u_{0,1}^{p_3} u_{0,2}^{p_3+1} w_n \rd x 
			\\
			& \quad  
			- \frac{\kappa_n}{\alpha_1} 
			\left( \alpha_1 \lambda_{0,1} + \| u_{0,1} \|_{V_1}^2 + \| \nabla w_{0,1} \|_{L^2}^2 \right)
			+ \frac{1}{2} \| u_{0,2} \|_{V_2}^2 - \frac{\mu_2}{2p_2+2} \int_{\RN} u_{0,2}^{2p_2+2} \rd x 
			\\
			&\quad 
			- \frac{\mu_1}{2p_1+2} \int_{\RN} u_{0,1}^{2p_1+2} + w_{n}^{2p_1+2} 
			+ (2p_1+2) \left( u_{0,1}^{2p_1+1} w_n + w_n^{2p_1+1} u_{0,1} \right) \rd x 
			\\
			&\quad 
			+ \frac{\kappa_n}{\alpha_1} \mu_1 \int_{\RN} u_{0,1}^{2p_1+2} + w_n^{2p_1+2} \rd x 
			\\
			&\quad  
			- \frac{\beta}{p_3+1} 
			\int_{\RN} u_{0,1}^{p_3+1} u_{0,2}^{p_3+1} + (p_3+1) u_{0,1}^{p_3} u_{0,2}^{p_3+1} w_n \rd x 
			+ \frac{\kappa_n}{\alpha_1} \beta \int_{\RN} u_{0,1}^{p_3+1} u_{0,2}^{p_3+1} \rd x 
			+ O(\kappa_n^{\theta})
			\\
			&= E(u_0) + E_1(w_{0,1}) - \mu_1 \int_{\RN} w_n^{2p_1+1} u_{0,1} \rd x 
			 + O(\kappa_n^{\theta}).
		\end{aligned}
	\]
As in the proof of Theorem \ref{theorem:1.1}, since $\theta>1$, we see 
	\[
		- \mu_1 \int_{\RN} w_n^{2p_1+1} u_{0,1} \rd x + O(\kappa_n^{\theta}) < 0
	\]
for sufficiently large $n$, hence, \eqref{eq:4.4} gives a contradiction:
	\[
		\tea < E(u_0) + E_1(w_{0,1}) = \tea.
	\]
Thus, \eqref{eq:4.3} may not happen and we get $\| u_{n,1} - u_{0,1} \|_{L^2} \to 0$. 
Hence, Theorem \ref{theorem:1.2} holds. 
\end{proof}

\subsection*{Acknowledgement}
The first author was supported by JSPS KAKENHI Grant Number JP16K17623 and 
JP17H02851. 
The second author was supported by JSPS KAKENHI Grant Numbers 16K05225 and 17KK0086.

\appendix

\section{Proof of \eqref{eq:1.4} under $p_3 \geq 1$}
\label{section:A}

	In this appendix, following \cite{L-84-2} (cf. \cite{B-17,C-03}), 
we shall prove \eqref{eq:1.4} under (p1), (V1) and $p_3 \geq 1$. 
Here we allow the case $V_1 (x) \equiv V_2(x) \equiv 0$ and in this case, 
we may assume $\theta \neq (0,0), (1,1)$ for \eqref{eq:1.4}.

	We begin with 
	\begin{lemma}\label{lemma:A.1}
		Suppose \textup{(V1)} and \textup{(p1)}. 
			\begin{enumerate}
			\item 
			Let $\gamma = (\gamma_1,\gamma_2) \in [0,\alpha_1] \times [0,\alpha_2] \setminus \{(0,0)\}$. 
			Then $e(\gamma) \leq e_\infty (\gamma) < 0$. 
			\item 
			Let $\gamma \in [0,\alpha_1] \times [0,\alpha_2] \setminus \{(0,0)\}$ with $\gamma_{i_0} > 0$. 
			Then there exists a $\delta_\gamma > 0$ such that 
			$ \| u_{i_0} \|_{L^{2p_{i_0}+2}}^{2p_{i_0}+2} \geq \delta_\gamma$ 
			for all $u \in M(\gamma)$ with $E(u) \leq e(\gamma) + \delta_\gamma$ 
			(resp. $E_\infty(u) \leq e_\infty(\gamma) + \delta_\gamma$). 
			\item 
			The values $e(\alpha_1,0), e(0,\alpha_2), e_\infty(\alpha_1,0), e_\infty(0,\alpha_2)$ are 
			attained by positive functions. 
		\end{enumerate}
	\end{lemma}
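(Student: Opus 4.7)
My plan is to address the three parts in sequence, using (i) as a tool in both (ii) and (iii).

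For (i), the inequality $e(\gamma) \leq e_\infty(\gamma)$ is immediate from (V1): since $V_i(x) \leq 0$, one has $E(u) \leq E_\infty(u)$ for every admissible $u$, so taking infima yields the bound. To show $e_\infty(\gamma) < 0$, I apply the mass-preserving rescaling $u_\lambda(x) := \lambda^{N/2} u(\lambda x)$ componentwise to a fixed test pair $u \in M(\gamma) \cap (C_0^\infty(\RN))^2$, taken nontrivial on each coordinate with $\gamma_i > 0$. Under this scaling the gradient terms scale as $\lambda^2$, while the self-interaction and cross-interaction terms scale as $\lambda^{Np_1}$, $\lambda^{Np_2}$ and $\lambda^{Np_3}$ respectively. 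Since (p1) gives $Np_j < 2$ for each $j$, for $\lambda$ small the negative nonlinear contributions dominate the positive gradient contribution and $E_\infty(u_\lambda) < 0$.

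For (ii), I argue by contradiction. Suppose $(u_n) \subset M(\gamma)$ satisfies $E(u_n) \to e(\gamma)$ yet $\|u_{n,i_0}\|_{L^{2p_{i_0}+2}}^{2p_{i_0}+2} \to 0$. By Lemma~\ref{lemma:2.1}(i), $(u_n)$ is bounded in $H$. I then apply Lions' vanishing/non-vanishing dichotomy to $(u_{n,i_0})$, a bounded $H^1(\RN)$ sequence with fixed $L^2$-mass $\gamma_{i_0}>0$. In the non-vanishing case, there exist $R>0$ and $(y_n) \subset \RN$ with $\liminf_n \int_{B_R(y_n)}|u_{n,i_0}|^2 > 0$, and, passing to a subsequence, $u_{n,i_0}(\cdot+y_n)$ converges weakly in $H^1$ and strongly in $L^2_{\rm loc}$ to some $v \not\equiv 0$; Fatou then gives $\|v\|_{L^{2p_{i_0}+2}}^{2p_{i_0}+2} \leq \liminf_n \|u_{n,i_0}\|_{L^{2p_{i_0}+2}}^{2p_{i_0}+2} = 0$, forcing $v \equiv 0$, a contradiction. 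In the vanishing case, Lions' lemma yields $\|u_{n,i_0}\|_{L^q} \to 0$ for every $2<q<2^\ast$; together with a ball-plus-far-field splitting using $V_{i_0} \in L^\infty$ and $V_{i_0}(x) \to 0$, this forces $\int V_{i_0}|u_{n,i_0}|^2 \to 0$ and the vanishing of every nonlinear term containing a factor of $u_{n,i_0}$. Dropping the nonnegative gradient contribution of $u_{n,i_0}$ yields $E(u_n) \geq e(\gamma - \gamma_{i_0}\mathbf{e}_{i_0}) + o(1)$, while Lemma~\ref{lemma:2.1}(iii) combined with part (i) gives $e(\gamma - \gamma_{i_0}\mathbf{e}_{i_0}) \geq e(\gamma) - e_\infty(\gamma_{i_0}\mathbf{e}_{i_0}) > e(\gamma)$, contradicting $E(u_n) \to e(\gamma)$. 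The $E_\infty$ version is identical, using the analogous subadditivity $e_\infty(\gamma) \leq e_\infty(\gamma') + e_\infty(\gamma-\gamma')$.

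For (iii), attainment of $e_\infty(\alpha_1,0)$ and $e_\infty(0,\alpha_2)$ reduces to the classical scalar mass-subcritical constrained minimization without potential, whose existence follows from Schwarz symmetrization and Lions' concentration-compactness. For $e(\alpha_1,0)$ (and symmetrically $e(0,\alpha_2)$) I separate the cases $V_1 \equiv 0$, which coincides with $e_\infty$, and $V_1 \not\equiv 0$. In the latter, translating the $e_\infty$-minimizer $w$ to a point $x_0$ where $V_1(x_0) < 0$ gives $E(w(\cdot-x_0),0) < E_\infty(w,0)$, whence the strict inequality $e(\alpha_1,0) < e_\infty(\alpha_1,0)$. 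Combined with the classical subadditivity of $e_\infty$, this yields the scalar Lions strict-subadditivity $e(\alpha_1,0) < e(\beta\alpha_1,0) + e_\infty((1-\beta)\alpha_1,0)$ for all $\beta \in [0,1)$, ruling out dichotomy and vanishing along a minimizing sequence and producing a minimizer. Positivity in all four cases follows by replacing the non-trivial component with its modulus (which does not increase the energy) and applying the strong maximum principle to the Euler--Lagrange equation.

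The hardest step will be the concentration-compactness for $e(\alpha_1,0)$ in the $V_1 \not\equiv 0$ case of (iii): it must be carried out independently of Theorem~\ref{theorem:1.1}, whose proof invokes the present lemma. However, since this is the scalar one-mass case without coupling, it is already covered by the original argument of Lions~\cite{L-84-1,L-84-2}.
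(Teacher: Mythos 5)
Your proposal is correct and follows essentially the same route as the paper: part (i) by the comparison $E\le E_\infty$ plus negativity of $e_\infty$ (which the paper simply quotes from \cite{GJ-16} and you establish by the standard mass-preserving scaling), part (ii) by the same contradiction argument combining the subadditivity of Lemma \ref{lemma:2.1}(iii) with part (i), and part (iii) by reduction to the classical scalar problem of Lions \cite{L-84-1,L-84-2,C-03} followed by the modulus/strong-maximum-principle step. If anything, your treatment of (ii) is more complete than the paper's: the one-line inequality $\lim_n E(u_n)\ge\liminf_n E(0,u_{n,2})$ tacitly requires $\int_{\RN}V_{i_0}|u_{n,i_0}|^2\,\rd x\to 0$, which your ball-plus-far-field splitting supplies explicitly.
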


	\begin{proof}
(i) See Lemma \ref{lemma:2.1} and \cite{GJ-16}. 

(ii) We only prove for $E$ and the case $\gamma_1>0$, and argue by contradiction. 
Let $\gamma_1>0$. If there exists $(u_n)_{n=1}^\infty \subset M(\gamma) $ such that 
$E(u_n) \to e(\gamma)$ and $\| u_{n,1} \|_{L^{2p_1+2}} \to 0$, then we obtain 
	\[
		e(\gamma) = \lim_{n\to \infty} E(u_n) 
		\geq \liminf_{n\to \infty} E(0,u_{n,2}) \geq e( 0, \gamma_2 ).
	\]
On the other hand, Lemma \ref{lemma:2.1} and (i) give a contradiction:
	\[
		e(\gamma) \leq e(0,\gamma_2) + e_\infty(\gamma_1,0) < e(0,\gamma_2) \leq e(\gamma).
	\]
Thus, (ii) holds.

	(iii) First, using the argument in \cite[p.228--p.229]{L-84-2} and \cite[Chapter 8, section 3]{C-03}, 
we observe that the values $e(\alpha_1,0), e(0,\alpha_2), e_\infty(\alpha_1,0), e_\infty(0,\alpha_2)$ 
admit minimizers. Next, if $u$ is a minimizer of one of these values, then 
so is $|u|$. In addition, $|u|$ satisfies the elliptic equations and 
the strong maximum principle asserts $|u|>0$ in $\RN$. 
Hence, $|u|$ is the desired minimizer. 
	\end{proof}

For $i=1,2$, we set 
	\[
		\begin{aligned}
			\wt{E}_i(u) &:= \frac{1}{2} \|u_i \|_{V_i}^2 - \frac{\mu_i}{2p_i+2} \| u_i \|_{L^{2p_i+2}}^{2p_i+2} 
			- \frac{\beta}{p_3+1} \| u_1 u_2 \|_{L^{p_3+1}}^{p_3+1},
			\\
			\wt{E}_{\infty,i}(u) &:= \frac{1}{2} \| \nabla u_i \|_{L^2}^2 - \frac{\mu_i}{2p_i+2} \| u_i \|_{L^{2p_i+2}}^{2p_i+2} 
			- \frac{\beta}{p_3+1} \| u_1 u_2 \|_{L^{p_3+1}}^{p_3+1}.
		\end{aligned}
	\]
Then $\wt{E}_i (u) \leq \wt{E}_{\infty,i} (u)$, $E(u) = \wt{E}_1(u) + E(0,u_2) = \wt{E}_2(u) + E(u_1,0)$ and the same equalities hold for $E_\infty$ and 
$\wt{E}_{\infty,i}$.

Now fix $\theta = (\theta_1,\theta_2) \in [0,1] \times [0,1]$ with $\theta \neq (0,0)$ and assume 
$0< \theta_1 < 1$. Put $\gamma := ( \theta_1 \alpha_1 , \theta_2 \alpha_2)$ and 
choose $u=(u_1,u_2), v = (v_1,v_2) \in ( C^\infty_0(\RN) )^2$ so that 
	\begin{equation}\label{eq:A.1}
		\| u_i \|_{L^2}^2 = \theta_i \alpha_i, 
		\  \| v_i \|_{L^2}^2 = (1-\theta_i) \alpha_i , 
		\  E(u) \leq e( \gamma ) + \delta_{\gamma},
		\  E_\infty(v) \leq e_\infty ( \alpha - \gamma ) 
		+ \delta_{ \alpha - \gamma } 
	\end{equation}
where $\delta_{ \gamma }, \delta_{ \alpha - \gamma } > 0$ are constants in Lemma \ref{lemma:A.1}(ii). 
Since $u$ and $v$ have compact support, we find an $n$ so that 
$u_i (x) v_j(x-n\mathbf{e}_1) \equiv 0$ for $i,j=1,2$ and write $v_{n,i} (x) := v_i(x-n\mathbf{e}_1)$. 
From Lemma \ref{lemma:A.1} (ii), $p_3 \geq 1$ and $0<\theta_1<1$, it follows that 
	\[
		\begin{aligned}
			\theta_1 e( \alpha ) 
			& \leq \theta_1 E \left( \frac{u_1}{\sqrt{\theta_1}} , u_2 + v_{n,2} \right) 
			\\
			&= \theta_1 \left( \wt{E}_1 \left( \frac{u_1}{\sqrt{\theta_1}} , u_2 + v_{n,2} \right) + E(0,u_2 + v_{n,2}) \right)
			\\
			&\leq \wt{E}_1(u) - \mu_1 \frac{\theta_1^{-p_1} - 1 }{2p_1+2} \| u_1 \|_{L^{2p_1+2}}^{2p_1+2} 
			- \beta \frac{\theta_1^{-\frac{p_3-1}{2}}  - 1  }{p_3+1}  \| u_1 u_2 \|_{L^{p_3+1}}^{p_3+1} 
			+ \theta_1 \left( E(0,u_2) + E_\infty(0,v_{n,2}) \right)
			\\
			&\leq \wt{E}_1(u) - \mu_1 \frac{ \theta_1^{-p_1} - 1  }{2p_1+2} \delta_\gamma 
			+ \theta_1 \left( E(0,u_2) + E_\infty(0,v_{n,2}) \right). 
		\end{aligned}
	\]
Similarly, 
	\[
		\begin{aligned}
			(1-\theta_1) e(\alpha) 
			&\leq (1-\theta_1) E \left( \frac{v_{n,1}}{\sqrt{1-\theta_1}} , u_2 + v_{n,2} \right)
			\\
			&\leq \wt{E}_{\infty,1}(v) - \mu_1 \frac{ (1-\theta_1)^{-p_1} - 1  }{2p_1+2} \delta_{\alpha-\gamma} 
			+ \left( 1 - \theta_1 \right) \left( E(0,u_2) + E_\infty (0,v_{n,2}) \right).
		\end{aligned}
	\]
Hence, 
	\[
		\begin{aligned}
			e(\alpha) &= \theta_1 e(\alpha) + (1-\theta_1) e(\alpha)
			\\
			&\leq E(u) + E_\infty(v) 
			- \mu_1 \frac{  \theta_1^{-p_1} - 1  }{2p_1+2} \delta_\gamma 
			- \mu_1 \frac{ (1-\theta_1)^{-p_1} - 1  }{2p_1+2} \delta_{\alpha-\gamma} .
		\end{aligned}
	\]
Since $u,v \in (C^\infty_0(\RN))^2$ are arbitrary elements satisfying \eqref{eq:A.1}, 
from $0< \theta_1 < 1$ we infer that 
	\[
		\begin{aligned}
			e(\alpha) 
			&\leq  
			e(\gamma) + e_\infty(\alpha - \gamma) 
			- \mu_1 \frac{  \theta_1^{-p_1} - 1   }{2p_1+2} \delta_\gamma 
			- \mu_1 \frac{ (1-\theta_1)^{-p_1} - 1  }{2p_1+2} \delta_{\alpha-\gamma}
			\\
			&< e(\gamma) + e_\infty (\alpha - \gamma) = 
			e \left( \theta_1 \alpha_1 , \theta_2 \alpha_2 \right) 
			+ e_\infty \left( (1-\theta_1) \alpha_1 , (1-\theta_2) \alpha_2 \right).
		\end{aligned}
	\]
Hence, \eqref{eq:1.4} holds when $0<\theta_1 < 1$.

	Remark that in a similar way, we may prove \eqref{eq:1.4} when $0< \theta_2 < 1$. 
Thus, the remaining cases are $\theta = (1,0), (0,1), (1,1)$.

	When $\theta = (1,0), (0,1)$, from Lemma \ref{lemma:A.1} (iii), to prove \eqref{eq:1.4}, 
we can argue as in Step 1 in the proof of Theorem \ref{theorem:1.1}. 
Therefore, in the case $V_1 (x) \equiv V_2 (x) \equiv 0$, \eqref{eq:1.4} holds and $e_\infty (\alpha)$ is attained.

	Finally, let us consider the case $V_1 (x) \not\equiv 0$ or $V_2(x) \not\equiv 0$. 
Notice that $e_\infty (\alpha)$ is attained and using a similar argument to 
Lemma \ref{lemma:A.1} (iii), 
we may choose a positive minimizer for $e_\infty(\alpha)$. 
Again, we may exploit the proof of Step 1 in the proof of Theorem \ref{theorem:1.1} and 
conclude that \eqref{eq:1.4} holds.

\end{document}